\providecommand{\U}[1]{\protect \rule{.1in}{.1in}}
\newtheorem{theorem}{Theorem}
\theoremstyle{plain}
\newtheorem{conclusion}{Conclusion}
\newtheorem{corollary}{Corollary}
\newtheorem{definition}{Definition}
\newtheorem{lemma}{Lemma}
\newtheorem{proposition}{Proposition}
\newtheorem{remark}{Remark}
\numberwithin{equation}{section}
\begin{document}
\title[Generalized weighted Morrey estimates]{The boundedness of certain sublinear operators with rough kernel generated by
{Calder\'{o}n-Zygmund operators} and their commutators on generalized weighted
Morrey spaces}
\author{F.GURBUZ}
\address{ANKARA UNIVERSITY, FACULTY OF SCIENCE, DEPARTMENT OF MATHEMATICS, TANDO\u{G}AN
06100, ANKARA, TURKEY }
\curraddr{}
\email{feritgurbuz84@hotmail.com}
\urladdr{}
\thanks{}
\thanks{}
\thanks{}
\date{}
\subjclass[2010]{ 42B20, 42B25, 42B35}
\keywords{{Sublinear operator; Calder\'{o}n-Zygmund operator; rough kernel; generalized
weighted Morrey space; commutator; BMO}}
\dedicatory{ }
\begin{abstract}
The aim of this paper is to get the boundedness of certain sublinear operators
with rough kernel generated by {Calder\'{o}n-Zygmund operators} on the
generalized weighted Morrey spaces under generic size conditions which are
satisfied by most of the operators in harmonic analysis. We also prove that
the commutator operators formed by $BMO$ functions and certain sublinear
operators with rough kernel are also bounded on the generalized weighted
Morrey spaces. Marcinkiewicz operator which satisfies the conditions of these
theorems can be considered as an example.

\end{abstract}
\maketitle

\section{Introduction}

The classical Morrey spaces $M_{p,\lambda}$ have been introduced by Morrey in
\cite{Morrey} to study the local behavior of solutions of second order
elliptic partial differential equations(PDEs). In recent years there has been
an explosion of interest in the study of the boundedness of operators on
Morrey-type spaces. It has been obtained that many properties of solutions to
PDEs are concerned with the boundedness of some operators on Morrey-type
spaces. In fact, better inclusion between Morrey and H\"{o}lder spaces allows
to obtain higher regularity of the solutions to different elliptic and
parabolic boundary problems (see \cite{FazPalRag, Pal, Shi, Softova} for details).

Let ${\mathbb{R}^{n}}$ be the $n-$dimensional Euclidean space of points
$x=(x_{1},...,x_{n})$ with norm $|x|=\left(
{\displaystyle \sum \limits_{i=1}^{n}}
x_{i}^{2}\right)  ^{\frac{1}{2}}$. Let $B=B(x_{0},r_{B}) $ denote the ball
with the center $x_{0}$ and radius $r_{B}$. For a given measurable set $E$, we
also denote the Lebesgue measure of $E$ by $\left \vert E\right \vert $. For any
given $\Omega \subseteq{\mathbb{R}^{n}}$ and $0<p<\infty$, denote by
$L_{p}\left(  \Omega \right)  $ the spaces of all functions $f$ satisfying%
\[
\left \Vert f\right \Vert _{L_{p}\left(  \Omega \right)  }=\left(
{\displaystyle \int \limits_{\Omega}}
\left \vert f\left(  x\right)  \right \vert ^{p}dx\right)  ^{\frac{1}{p}}%
<\infty.
\]

We recall the definition of classical Morrey spaces $M_{p,\lambda}$ as%

\[
M_{p,\lambda}\left(  {\mathbb{R}^{n}}\right)  =\left \{  f:\left \Vert
f\right \Vert _{M_{p,\lambda}\left(  {\mathbb{R}^{n}}\right)  }=\sup
\limits_{x\in{\mathbb{R}^{n}},r>0}\,r^{-\frac{\lambda}{p}}\, \Vert
f\Vert_{L_{p}(B(x,r))}<\infty \right \}  ,
\]
where $f\in L_{p}^{loc}({\mathbb{R}^{n}})$, $0\leq \lambda \leq n$ and $1\leq
p<\infty$.

Note that $M_{p,0}=L_{p}({\mathbb{R}^{n}})$ and $M_{p,n}=L_{\infty
}({\mathbb{R}^{n}})$. If $\lambda<0$ or $\lambda>n$, then $M_{p,\lambda
}={\Theta}$, where $\Theta$ is the set of all functions equivalent to $0$ on
${\mathbb{R}^{n}}$. It is known that $M_{p,\lambda}({\mathbb{R}^{n}})$ is an
expansion of $L_{p}({\mathbb{R}^{n}})$ in the sense that $L_{p,0}%
=L_{p}({\mathbb{R}^{n}})$.

We also denote by $WM_{p,\lambda}\equiv WM_{p,\lambda}({\mathbb{R}^{n}})$ the
weak Morrey space of all functions $f\in WL_{p}^{loc}({\mathbb{R}^{n}})$ for
which
\[
\left \Vert f\right \Vert _{WM_{p,\lambda}}\equiv \left \Vert f\right \Vert
_{WM_{p,\lambda}({\mathbb{R}^{n}})}=\sup_{x\in{\mathbb{R}^{n}},\;r>0}%
r^{-\frac{\lambda}{p}}\Vert f\Vert_{WL_{p}(B(x,r))}<\infty,
\]
where $WL_{p}(B(x,r))$ denotes the weak $L_{p}$-space of measurable functions
$f$ for which
\[%
\begin{split}
\Vert f\Vert_{WL_{p}(B(x,r))} &  \equiv \Vert f\chi_{_{B(x,r)}}\Vert
_{WL_{p}({\mathbb{R}^{n}})}\\
&  =\sup_{t>0}t\left \vert \left \{  y\in B(x,r):\,|f(y)|>t\right \}  \right \vert
^{1/{p}}\\
&  =\sup_{0<t\leq|B(x,r)|}t^{1/{p}}\left(  f\chi_{_{B(x,r)}}\right)  ^{\ast
}(t)<\infty,
\end{split}
\]
where $g^{\ast}$ denotes the non-increasing rearrangement of a function $g$.

Throughout the paper we assume that $x\in{\mathbb{R}^{n}}$ and $r>0$ and also
let $B(x,r)$ denotes the open ball centered at $x$ of radius $r$, $B^{C}(x,r)$
denotes its complement and $|B(x,r)|$ is the Lebesgue measure of the ball
$B(x,r)$ and $|B(x,r)|=v_{n}r^{n}$, where $v_{n}=|B(0,1)|$.

Morrey has investigated that many properties of solutions to PDEs can be
attributed to the boundedness of some operators on Morrey spaces. For the
boundedness of the Hardy--Littlewood maximal operator, the fractional integral
operator and the Calder\'{o}n--Zygmund singular integral operator on these
spaces, we refer the readers to \cite{Adams, ChFra, Peetre}. For the
properties and applications of classical Morrey spaces, see \cite{ChFraL1,
ChFraL2, FazRag2, FazPalRag} and references therein.

After studying Morrey spaces in detail, researchers have passed to generalized
Morrey spaces. Mizuhara \cite{Miz} has given generalized Morrey spaces
$M_{p,\varphi}$ considering $\varphi \left(  r\right)  $ instead of
$r^{\lambda}$ in the above definition of the Morrey space. Later, Guliyev
\cite{GulJIA} and Karaman \cite{Karaman} have defined the generalized Morrey
spaces $M_{p,\varphi}$ with normalized norm as follows:

\begin{definition}
$\left(  \text{\textbf{Generalized Morrey space}}\right)  $ Let $\varphi(x,r)$
be a positive measurable function on ${\mathbb{R}^{n}}\times(0,\infty)$ and
$1\leq p<\infty$. We denote by $M_{p,\varphi}\equiv M_{p,\varphi}%
({\mathbb{R}^{n}})$ the generalized Morrey space, the space of all functions
$f\in L_{p}^{loc}({\mathbb{R}^{n}})$ with finite quasinorm
\[
\Vert f\Vert_{M_{p,\varphi}}=\sup \limits_{x\in{\mathbb{R}^{n}},r>0}%
\varphi(x,r)^{-1}\,|B(x,r)|^{-\frac{1}{p}}\, \Vert f\Vert_{L_{p}(B(x,r))}.
\]
Also by $WM_{p,\varphi}\equiv WM_{p,\varphi}({\mathbb{R}^{n}})$ we denote the
weak generalized Morrey space of all functions $f\in WL_{p}^{loc}%
({\mathbb{R}^{n}})$ for which
\[
\Vert f\Vert_{WM_{p,\varphi}}=\sup \limits_{x\in{\mathbb{R}^{n}},r>0}%
\varphi(x,r)^{-1}\,|B(x,r)|^{-\frac{1}{p}}\, \Vert f\Vert_{WL_{p}%
(B(x,r))}<\infty.
\]

\end{definition}

According to this definition, we recover the Morrey space $M_{p,\lambda}$ and
weak Morrey space $WM_{p,\lambda}$ under the choice $\varphi(x,r)=r^{\frac
{\lambda-n}{p}}$:
\[
M_{p,\lambda}=M_{p,\varphi}\mid_{\varphi(x,r)=r^{\frac{\lambda-n}{p}}%
},~~~~~~~~WM_{p,\lambda}=WM_{p,\varphi}\mid_{\varphi(x,r)=r^{\frac{\lambda
-n}{p}}}.
\]

During the last decades various classical operators, such as maximal, singular
and potential operators have been widely investigated in classical and
generalized Morrey spaces (see \cite{BGGS, Gurbuz, Karaman, lu-yang-zhou} for details).

Maximal functions and singular integrals play a key role in harmonic analysis
since maximal functions could control crucial quantitative information
concerning the given functions, despite their larger size, while singular
integrals, Hilbert transform as it's prototype, recently intimately connected
with PDEs, operator theory and other fields.

Let $f\in L^{loc}\left(  {\mathbb{R}^{n}}\right)  $. The
Hardy-Littlewood(H--L) maximal operator $M$ is defined by
\[
Mf(x)=\sup_{t>0}|B(x,t)|^{-1}\int \limits_{B(x,t)}|f(y)|dy.
\]

Let $\overline{T}$ be a standard Calder\'{o}n-Zygmund(C--Z) singular integral
operator, briefly a C--Z operator, i.e., a linear operator bounded from
$L_{2}({\mathbb{R}^{n}})$ to $L_{2}({\mathbb{R}^{n}})$ taking all infinitely
continuously differentiable functions $f$ with compact support to the
functions $f\in L_{1}^{loc}({\mathbb{R}^{n}})$ represented by
\[
\overline{T}f(x)=p.v.\int \limits_{{\mathbb{R}^{n}}}k(x-y)f(y)\,dy\qquad
x\notin suppf.
\]
Such operators have been introduced in \cite{CM}. Here $k$ is a C--Z kernel
\cite{Grafakos}. Chiarenza and Frasca \cite{ChFra} have obtained the
boundedness of H--L maximal operator $M$ and C--Z operator $\overline{T}$ on
$M_{p,\lambda}\left(  {\mathbb{R}^{n}}\right)  $. It is also well known that
H--L maximal operator $M$ and C--Z operator $\overline{T}$ play an important
role in harmonic analysis (see \cite{GarRub, LuDingY, St, Stein93, Torch}).
Also, the theory of the C--Z operator is one of the important achievements of
classical analysis in the last century, which has many important applications
in Fourier analysis, complex analysis, operator theory and so on.

Suppose that $S^{n-1}$ is the unit sphere in ${\mathbb{R}^{n}}$ $(n\geq2)$
equipped with the normalized Lebesgue measure $d\sigma$. Let $\Omega \in
L_{s}(S^{n-1})$ with $1<s\leq \infty$ be homogeneous of degree zero. We define
$s^{\prime}=\frac{s}{s-1}$ for any $s>1$. Suppose that $T_{\Omega}$ represents
a linear or a sublinear operator, which satisfies that for any $f\in
L_{1}({\mathbb{R}^{n}})$ with compact support and $x\notin suppf$
\begin{equation}
|T_{\Omega}f(x)|\leq c_{0}\int \limits_{{\mathbb{R}^{n}}}\frac{|\Omega
(x-y)|}{|x-y|^{n}}\,|f(y)|\,dy,\label{e1}%
\end{equation}
where $c_{0}$ is independent of $f$ and $x$.

For a locally integrable function $b$ on ${\mathbb{R}^{n}}$, suppose that the
commutator operator $T_{\Omega,b}$ represents a linear or a sublinear
operator, which satisfies that for any $f\in L_{1}({\mathbb{R}^{n}})$ with
compact support and $x\notin suppf$
\begin{equation}
|T_{\Omega,b}f(x)|\leq c_{0}%
{\displaystyle \int \limits_{{\mathbb{R}^{n}}}}
|b(x)-b(y)|\, \frac{|\Omega(x-y)|}{|x-y|^{n}}\,|f(y)|\,dy,\label{e2}%
\end{equation}
where $c_{0}$ is independent of $f$ and $x$.

We point out that the condition (\ref{e1}) in the case $\Omega \equiv1$ was
first introduced by Soria and Weiss in \cite{SW} . The conditions (\ref{e1})
and (\ref{e2}) are satisfied by many interesting operators in harmonic
analysis, such as the C--Z operators, Carleson's maximal operator, H--L
maximal operator, C. Fefferman's singular multipliers, R. Fefferman's singular
integrals, Ricci--Stein's oscillatory singular integrals, the Bochner--Riesz
means and so on (see \cite{LLY}, \cite{SW} for details).

Let $\Omega \in L_{s}(S^{n-1})$ with $1<s\leq \infty$ be homogeneous of degree
zero and satisfies the cancellation condition
\[
\int \limits_{S^{n-1}}\Omega(x^{\prime})d\sigma(x^{\prime})=0,
\]
where $x^{\prime}=\frac{x}{|x|}$ for any $x\neq0$. The C--Z singular integral
operator with rough kernel $\overline{T}_{\Omega}$ is defined by%

\[
\overline{T}_{\Omega}f(x)=p.v.\int \limits_{{\mathbb{R}^{n}}}\frac{\Omega
(x-y)}{|x-y|^{n}}f(y)dy,
\]
satisfies the condition (\ref{e1}).

It is obvious that when $\Omega \equiv1$, $\overline{T}_{\Omega}$ is the C--Z
operator $\overline{T}$.

In 1976, Coifman et al. \cite{CRW} introduced the commutator generated by
$\overline{T}_{\Omega}$ and a local integrable function $b$ as follows:
\begin{equation}
\lbrack b,\overline{T}_{\Omega}]f(x)\equiv b(x)\overline{T}_{\Omega
}f(x)-\overline{T}_{\Omega}(bf)(x)=p.v.\int \limits_{{\mathbb{R}^{n}}%
}[b(x)-b(y)]\frac{\Omega(x-y)}{|x-y|^{n}}f(y)dy.\label{e3}%
\end{equation}
Sometimes, the commutator defined by (\ref{e3}) is also called the commutator
in Coifman-Rocherberg-Weiss's sense, which has its root in the complex
analysis and harmonic analysis (see \cite{CRW}).

\begin{remark}
\cite{Shi} As another extension of Hilbert transform, a variety of operators
related to the singular integrals for C--Z with homogeneous kernel, but
lacking the smoothness required in the classical theory, have been studied. In
this case, when $\Omega$ satisfies some size conditions, the kernel of the
operator has no regularity, and so the operator is called rough integral
operator. The theory of Operators with homogeneous kernel is a well studied
area (see \cite{Grafakos} and \cite{LuDingY} for example). Lu et al.
(\cite{lu-yang-zhou}), Gurbuz et al. (\cite{BGGS}) and Gurbuz (\cite{Gurbuz})
have studied certain sublinear operators mentioned above with rough kernel on
the generalized Morrey spaces. These include the operator $[b,\overline
{T}_{\Omega}]$. For more results, we refer the reader to \cite{BGGS, Gurbuz,
Gurbuz1, lu-yang-zhou, LuDingY}.
\end{remark}

In \cite{BGGS, Gurbuz}, the boundedness of the sublinear operators with rough
kernel generated by C--Z operators and their commutators on generalized Morrey
spaces has been investigated.

In this paper, we first prove the boundedness of the sublinear operators with
rough kernels $T_{\Omega}$ satisfying condition (\ref{e1}) generated by C--Z
singular integral operators with rough kernel from one generalized weighted
Morrey space $M_{p,\varphi_{1}}\left(  w\right)  $ to another $M_{p,\varphi
_{2}}\left(  w\right)  $ with the weight function $w$ belonging to
Muckenhoupt's class $A_{p}$ for $1<p<\infty$, and from the space
$M_{1,\varphi_{1}}\left(  w\right)  $ to the weak space $WM_{1,\varphi_{2}%
}\left(  w\right)  $. Then, we also obtain the boundedness of the sublinear
commutator operators $T_{\Omega,b}$ satisfying condition (\ref{e2}) generated
by a C--Z type operator with rough kernel and $b$ from one generalized
weighted Morrey space $M_{p,\varphi_{1}}\left(  w\right)  $ to another
$M_{p,\varphi_{2}}\left(  w\right)  $ for $1<p<\infty$, $b\in BMO$ (bounded
mean oscillation). Provided that $b\in BMO$ and $T_{\Omega,b}$ is a sublinear
operator, we find the sufficient conditions on the pair $(\varphi_{1}%
,\varphi_{2})$ which ensures the boundedness of the commutator operators
$T_{\Omega,b}$ from $M_{p,\varphi_{1}}\left(  w\right)  $ to another
$M_{p,\varphi_{2}}\left(  w\right)  $ for $1<p<\infty$. In all the cases the
conditions for the boundedness of $T_{\Omega}$ and $T_{\Omega,b}$ are given in
terms of Zygmund-type integral inequalities on $\left(  \varphi_{1}%
,\varphi_{2}\right)  $ which do not assume any assumption on monotonicity of
$\varphi_{1},\varphi_{2}$ in $r$. Finally, as an example to the conditions of
these theorems are satisfied, we can consider the Marcinkiewicz operator.

By $A\lesssim B$ we mean that $A\leq CB$ with some positive constant $C$
independent of appropriate quantities. If $A\lesssim B$ and $B\lesssim A$, we
write $A\approx B$ and say that $A$ and $B$ are equivalent. We will also
denote the conjugate exponent of $p>1$ by $p^{\prime}=\frac{p}{p-1}$ and $s>1$
by $s^{\prime}=\frac{s}{s-1}$.

\section{Weighted Morrey spaces}

A weight function is a locally integrable function on ${\mathbb{R}^{n}}$ which
takes values in $(0,\infty)$ almost everywhere. For a weight function $w$ and
a measurable set $E$, we define $w(E)=%
{\displaystyle \int \limits_{E}}
w(x)dx$, the Lebesgue measure of $E$ by $|E|$ and the characteristic function
of $E$ by $\chi_{_{E}}$. Given a weight function $w$, we say that $w$
satisfies the doubling condition if there exists a constant $D>0$ such that
for any ball $B $, we have $w(2B)\leq Dw(B)$. When $w$ satisfies this
condition, we denote $w\in \Delta_{2}$, for short.

If $w$ is a weight function, we denote by $L_{p}(w)\equiv L_{p}({{\mathbb{R}%
^{n}}},w)$ the weighted Lebesgue space defined by the norm
\[
\Vert f\Vert_{L_{p,w}}=\left(
{\displaystyle \int \limits_{{{\mathbb{R}^{n}}}}}
|f(x)|^{p}w(x)dx\right)  ^{\frac{1}{p}}<\infty,\qquad \text{when }1\leq
p<\infty
\]
and by $\Vert f\Vert_{L_{\infty,w}}=\operatorname*{esssup}\limits_{x\in
{\mathbb{R}^{n}}}|f(x)|w(x)$ when $p=\infty$.

We denote by $WL_{p}(w)$ the weighted weak space consisting of all measurable
functions $f$ such that%
\[
\Vert f\Vert_{WL_{p}(w)}=\sup \limits_{t>0}tw\left(  \left \{  x\in
{{\mathbb{R}^{n}:}}\left \vert f\left(  x\right)  \right \vert >t\right \}
\right)  ^{\frac{1}{p}}<\infty.
\]

We recall that a weight function $w$ is in the Muckenhoupt's class
$A_{p}\left(  {{\mathbb{R}^{n}}}\right)  $, $1<p<\infty$, if
\begin{align}
\lbrack w]_{A_{p}} &  :=\sup \limits_{B}[w]_{A_{p}(B)}\nonumber \\
&  =\sup \limits_{B}\left(  \frac{1}{|B|}%
{\displaystyle \int \limits_{B}}
w(x)dx\right)  \left(  \frac{1}{|B|}%
{\displaystyle \int \limits_{B}}
w(x)^{1-p^{\prime}}dx\right)  ^{p-1}<\infty,\label{2}%
\end{align}
where the supremum is taken with respect to all the balls $B$ and $\frac{1}%
{p}+\frac{1}{p^{\prime}}=1$. The expression $[w]_{A_{p}}$ is called
characteristic constant of $w$. Note that, for all balls $B$ we have
\begin{equation}
\lbrack w]_{A_{p}}^{1/p}\geq \lbrack w]_{A_{p}(B)}^{1/p}=|B|^{-1}\Vert
w\Vert_{L_{1}(B)}^{1/p}\Vert w^{-1/p}\Vert_{L_{p^{\prime}}(B)}\geq1\label{1}%
\end{equation}
by the H\"{o}lder's inequality. For $p=1$, the class $A_{1}\left(
{{\mathbb{R}^{n}}}\right)  $ is defined by
\begin{equation}
\frac{1}{|B|}%
{\displaystyle \int \limits_{B}}
w(x)dx\leq C\inf \limits_{x\in B}w\left(  x\right) \label{5}%
\end{equation}
for every ball $B\subset{{\mathbb{R}^{n}}}$. Thus, we have the condition
$Mw(x)\leq Cw(x)$ with $[w]_{A_{1}}=\sup \limits_{x\in{\mathbb{R}^{n}}}%
\frac{Mw(x)}{w(x)}$, and also for $p=\infty$ we define $A_{\infty}=%
{\displaystyle \bigcup \limits_{1\leq p<\infty}}
A_{p}$, $[w]_{A_{\infty}}=\inf \limits_{1\leq p<\infty}[w]_{A_{p}}$ and
$[w]_{A_{\infty}}\leq \lbrack w]_{A_{p}}$.

One knows that $A_{p}\subset A_{s}$ if $1\leq p<s<\infty$, and that $w\in$
$A_{p}$ for some $1<p<s$ if $w\in A_{s}$ with $s>1$, and also $[w]_{A_{p}}%
\leq \lbrack w]_{A_{s}}$.

By (\ref{2}), we have
\begin{equation}
\left(  w^{-\frac{p^{\prime}}{p}}\left(  B\right)  \right)  ^{\frac
{1}{p^{\prime}}}=\left \Vert w^{-\frac{1}{p}}\right \Vert _{L_{p^{\prime}%
}\left(  B\right)  }\leq C\left \vert B\right \vert w\left(  B\right)
^{-\frac{1}{p}}\label{3}%
\end{equation}
for $1<p<\infty$. Note that%
\begin{equation}
\left(  \operatorname*{essinf}\limits_{x\in E}f\left(  x\right)  \right)
^{-1}=\operatorname*{esssup}\limits_{x\in E}\frac{1}{f\left(  x\right)
}\label{4}%
\end{equation}
is true for any real-valued nonnegative function $f$ and is measurable on $E$
(see \cite{Wheeden-Zygmund} page 143) and (\ref{5}); we get%
\begin{align}
\left \Vert w^{-1}\right \Vert _{L_{\infty}\left(  B\right)  }  &
=\operatorname*{esssup}\limits_{x\in B}\frac{1}{w\left(  x\right)
}\nonumber \\
& =\frac{1}{\operatorname*{essinf}\limits_{x\in B}w\left(  x\right)  }\leq
C\left \vert B\right \vert w\left(  B\right)  ^{-1}.\label{6*}%
\end{align}

\begin{proposition}
$\left(  \text{see \cite{Kuzu}}\right)  $ Since definition of the
Muckenhoupt's class $A_{p}\left(  {{\mathbb{R}^{n}}}\right)  $, we have%
\[
w^{1-p^{\prime}}\in A_{\frac{p^{\prime}}{s^{\prime}}}\text{ implies
}[w^{1-p^{\prime}}]_{A_{_{\frac{p^{\prime}}{s^{\prime}}}}\left(  B\right)
}^{\frac{s^{\prime}}{p^{\prime}}}=|B|^{-1}\Vert w^{^{1-p^{\prime}}}%
\Vert_{L_{1}(B)}^{\frac{s^{\prime}}{p^{\prime}}}\Vert w^{\frac{s^{\prime}}{p}%
}\Vert_{L_{\left(  \frac{p^{\prime}}{s^{\prime}}\right)  ^{\prime}}(B)}%
\]
for $1<p<\infty$. Since $w^{1-p^{\prime}}\in A_{\frac{p^{\prime}}{s^{\prime}}%
}\subset A_{p^{\prime}}$, we also know $w^{1-p^{\prime}}\in A_{\frac
{p^{\prime}}{s^{\prime}}}$ implies $w^{1-p^{\prime}}\in A_{p^{\prime}}$. Thus,
we have%
\begin{equation}
\lbrack w^{1-p^{\prime}}]_{A_{p^{\prime}}\left(  B\right)  }^{\frac
{1}{p^{\prime}}}=|B|^{-1}\Vert w^{^{1-p^{\prime}}}\Vert_{L_{1}(B)}^{\frac
{1}{p^{\prime}}}\Vert w^{\frac{1}{p}}\Vert_{L_{^{p}}(B)}.\label{7}%
\end{equation}
But, the converse of this implication is not generally valid.
\end{proposition}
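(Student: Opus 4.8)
The plan is to prove the Proposition by directly applying the definition of the Muckenhoupt class $A_q$ in the form given by the paper's own equation (\ref{1}), specialized first to the exponent $q = \frac{p'}{s'}$ and then to $q = p'$, with the base weight being $w^{1-p'}$ rather than $w$. The key observation is that equation (\ref{1}) is the general identity
\[
[u]_{A_q(B)}^{1/q} = |B|^{-1}\Vert u\Vert_{L_1(B)}^{1/q}\,\Vert u^{-1/q}\Vert_{L_{q'}(B)},
\]
valid for any weight $u$ and any $1<q<\infty$; the stated equalities are just this identity with the appropriate substitutions, so the bulk of the work is bookkeeping on the exponents to confirm that the norms match those written in the statement.

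First I would verify the second displayed formula (\ref{7}), which is the cleaner case. Setting $u = w^{1-p'}$ and $q = p'$ in the general identity, the conjugate exponent is $q' = (p')' = p$, and the factor $u^{-1/q} = w^{-(1-p')/p'} = w^{1/p}$, using that $\frac{p'-1}{p'} = \frac{1}{p}$ (since $\frac{1}{p}+\frac{1}{p'}=1$ gives $p'-1 = \frac{p'}{p}$). This reproduces exactly $\Vert w^{1/p}\Vert_{L_p(B)}$ and the prefactor $|B|^{-1}\Vert w^{1-p'}\Vert_{L_1(B)}^{1/p'}$, so (\ref{7}) follows once $w^{1-p'}\in A_{p'}$ is justified.

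Next I would treat the first displayed formula in the same way, now with $q = \frac{p'}{s'}$, so that $q' = \left(\frac{p'}{s'}\right)'$ and $u^{-1/q} = w^{-(1-p')\frac{s'}{p'}} = w^{\frac{s'}{p}}$, again invoking $\frac{p'-1}{p'}=\frac{1}{p}$; this matches the norm $\Vert w^{s'/p}\Vert_{L_{(p'/s')'}(B)}$ and the prefactor $\Vert w^{1-p'}\Vert_{L_1(B)}^{s'/p'}$ appearing in the statement, under the standing hypothesis $w^{1-p'}\in A_{p'/s'}$. The inclusion $w^{1-p'}\in A_{p'/s'}\subset A_{p'}$, which lets one pass from the first formula to (\ref{7}), rests on the monotonicity of Muckenhoupt classes already recorded in the excerpt, namely $A_{p_1}\subset A_{p_2}$ for $p_1<p_2$, together with the elementary inequality $\frac{p'}{s'}\le p'$ (equivalently $s'\ge 1$, which holds since $s>1$).

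The main obstacle, such as it is, will be the exponent arithmetic: one must carefully track the three intertwined conjugations ($p\leftrightarrow p'$, $s\leftrightarrow s'$, and the $A_q$-conjugate $q\leftrightarrow q'$) to confirm that the power of $w$ inside each norm and the index of each Lebesgue norm come out exactly as stated, and in particular that $w^{-(1-p')/q}$ simplifies to a positive power of $w$ in both cases. There is no genuine analytic difficulty beyond (\ref{1}) and the class-inclusion lemma; the final remark that the converse implication fails can be dispatched by recalling that membership in the larger class $A_{p'}$ does not force membership in the strictly smaller class $A_{p'/s'}$.
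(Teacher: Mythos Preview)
Your proposal is correct: both displayed identities are obtained by specializing the identity
\[
[u]_{A_q(B)}^{1/q}=|B|^{-1}\Vert u\Vert_{L_1(B)}^{1/q}\Vert u^{-1/q}\Vert_{L_{q'}(B)}
\]
(the middle equality in the paper's \eqref{1}) to $u=w^{1-p'}$ with $q=p'/s'$ and $q=p'$, respectively, together with the exponent relation $(p'-1)/p'=1/p$; the passage from the first to the second uses only the monotonicity $A_{p'/s'}\subset A_{p'}$ (valid since $s'>1$). The paper does not supply its own proof of this proposition---it merely records the result with a citation to \cite{Kuzu}---so there is no alternative argument to compare against; your derivation is the natural one.
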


\begin{proposition}
$\left(  \text{see \cite{Kuzu}}\right)  $ To make the proofs simpler, we can
also write $w^{1-p^{\prime}}\in A_{\frac{p^{\prime}}{s^{\prime}}}$ as follows:%
\begin{align}
\lbrack w^{1-p^{\prime}}]_{A_{\frac{p^{\prime}}{s^{\prime}}}\left(  B\right)
}^{\frac{s\left(  p-1\right)  }{p\left(  s-1\right)  }}  & =|B|^{-1}\Vert
w^{^{1-p^{\prime}}}\Vert_{L_{1}(B)}^{\frac{s\left(  p-1\right)  }{p\left(
s-1\right)  }}\Vert w^{\frac{s^{\prime}}{p}}\Vert_{L_{\left(  \frac{p^{\prime
}}{s^{\prime}}\right)  ^{\prime}}(B)}\nonumber \\
\lbrack w^{1-p^{\prime}}]_{A_{_{\frac{p^{\prime}}{s^{\prime}}}}\left(
B\right)  }^{\frac{1}{p^{\prime}}}  & =|B|^{-\frac{s-1}{s}}\Vert
w^{^{1-p^{\prime}}}\Vert_{L_{1}(B)}^{\frac{1}{p^{\prime}}}\Vert w\Vert
_{L_{^{\frac{s}{s-p}}}(B)}^{\frac{1}{p}},\label{8}%
\end{align}
where%
\[
1-p^{\prime}=-\frac{p^{\prime}}{p},~\frac{s^{\prime}}{p}=\frac{s}{p\left(
s-1\right)  },~\frac{s^{\prime}}{p^{\prime}}=\frac{s\left(  p-1\right)
}{p\left(  s-1\right)  },~\left(  \frac{s}{p}\right)  ^{\prime}=\frac{s}%
{s-p},~\left(  \frac{p^{\prime}}{s^{\prime}}\right)  ^{\prime}=\frac{p\left(
s-1\right)  }{s-p}.
\]

In the equation (\ref{8}) if we write (\ref{7}) instead of $\Vert
w^{^{1-p^{\prime}}}\Vert_{L_{1}(B)}^{\frac{1}{p^{\prime}}}$, then we obtain%
\begin{equation}
\lbrack w^{1-p^{\prime}}]_{A_{_{\frac{p^{\prime}}{s^{\prime}}}}\left(
B\right)  }^{\frac{1}{p^{\prime}}}=|B|^{\frac{1}{s}}[w^{1-p^{\prime}%
}]_{A_{p^{\prime}}\left(  B\right)  }^{\frac{1}{p^{\prime}}}\Vert w^{\frac
{1}{p}}\Vert_{L_{^{p}}(B)}^{-1}\Vert w\Vert_{L_{^{\frac{s}{s-p}}}(B)}%
^{\frac{1}{p}}.\label{9}%
\end{equation}

\end{proposition}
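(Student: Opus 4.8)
The plan is to obtain both displayed lines of (\ref{8}) by specializing the elementary identity recorded in (\ref{1}) to a single convenient weight and exponent, and then to read off (\ref{9}) by one substitution. Set $q=\frac{p'}{s'}$ and $v=w^{1-p'}=w^{-p'/p}$. Before computing anything I would first record the exponent dictionary
\[
\frac{1}{q}=\frac{s'}{p'}=\frac{s(p-1)}{p(s-1)},\qquad q'=\Big(\frac{p'}{s'}\Big)'=\frac{p(s-1)}{s-p},\qquad \frac{q}{p'}=\frac{1}{s'}=\frac{s-1}{s},
\]
each of which is immediate from the relations already listed after (\ref{8}).

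First I would apply the middle equality in (\ref{1}) verbatim, with $w$ replaced by $v$ and $p$ replaced by $q$, to get
\[
[v]_{A_{q}(B)}^{1/q}=|B|^{-1}\,\Vert v\Vert_{L_{1}(B)}^{1/q}\,\Vert v^{-1/q}\Vert_{L_{q'}(B)}.
\]
The one genuine identification here concerns the last factor: since $v^{-1/q}=w^{(p'/p)(1/q)}=w^{s'/p}$, this factor is exactly $\Vert w^{s'/p}\Vert_{L_{(p'/s')'}(B)}$, which—together with $1/q=\frac{s(p-1)}{p(s-1)}$ on the left—reproduces the first line of (\ref{8}) precisely.

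Next, to pass to the second line, I would raise the previous identity to the power $q/p'=1/s'$, so that the left side becomes $[v]_{A_{q}(B)}^{1/p'}$ and the measure factor becomes $|B|^{-q/p'}=|B|^{-(s-1)/s}$. The main bookkeeping step is to verify $(s'/p)\,q'=\frac{s}{s-p}$, which gives $\Vert w^{s'/p}\Vert_{L_{q'}(B)}=\Vert w\Vert_{L_{s/(s-p)}(B)}^{s'/p}$; raising this to $q/p'$ and using $(s'/p)(q/p')=1/p$ collapses the weight factor to $\Vert w\Vert_{L_{s/(s-p)}(B)}^{1/p}$, yielding the second line of (\ref{8}) exactly.

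Finally, for (\ref{9}) I would simply insert (\ref{7}) in the solved form $\Vert w^{1-p'}\Vert_{L_{1}(B)}^{1/p'}=|B|\,[w^{1-p'}]_{A_{p'}(B)}^{1/p'}\,\Vert w^{1/p}\Vert_{L_{p}(B)}^{-1}$ into the second line of (\ref{8}); the two powers of $|B|$ then combine as $|B|^{-(s-1)/s}\cdot|B|=|B|^{1/s}$, giving (\ref{9}). I expect no analytic obstacle anywhere: every relation used is an exact equality coming straight from the definition (\ref{2}) of $[\,\cdot\,]_{A_{\cdot}(B)}$, so the entire argument is careful exponent arithmetic, and the only real care needed is in tracking the conversions among $p'$, $s'$, $q'=\frac{p(s-1)}{s-p}$ and $\frac{s}{s-p}$.
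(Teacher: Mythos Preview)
Your argument is correct and is precisely the intended one: the paper gives no separate proof but simply records these identities (citing \cite{Kuzu}), and the only derivation it indicates---substituting (\ref{7}) for $\Vert w^{1-p'}\Vert_{L_1(B)}^{1/p'}$ in (\ref{8}) to reach (\ref{9})---is exactly the final step you carry out. Your first two steps (specializing (\ref{1}) to $v=w^{1-p'}$, $q=p'/s'$, then raising to the power $q/p'=1/s'$) are the straightforward unpacking of the definition that the paper leaves implicit, and all of your exponent computations check out.
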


\begin{lemma}
\label{lemma10}$\left(  \text{see \cite{Kuzu}}\right)  $ Let $1<p<s$ and
$w^{1-p^{\prime}}\in A_{\frac{p^{\prime}}{s^{\prime}}}$. Then, the inequality%
\[
\left \Vert \Omega \left(  \cdot-y\right)  \right \Vert _{L_{p,w}\left(
B\right)  }\lesssim \left \Vert \Omega \left(  \cdot-y\right)  \right \Vert
_{L_{s}\left(  B\right)  }\left \Vert w\right \Vert _{L_{\left(  \frac{s}%
{p}\right)  ^{\prime}}\left(  B\right)  }^{\frac{1}{p}}%
\]
holds for every $y\in{{\mathbb{R}^{n}}}$ and for any ball $B\subset
{{\mathbb{R}^{n}}}$.
\end{lemma}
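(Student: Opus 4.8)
The plan is to reduce the entire statement to a single, judicious application of Hölder's inequality; no part of the argument is genuinely hard, and the whole content of the lemma is the correct choice of conjugate exponents. Writing the left-hand side explicitly as
\[
\left \Vert \Omega(\cdot-y)\right \Vert _{L_{p,w}(B)}^{p}=\int \limits_{B}|\Omega(x-y)|^{p}\,w(x)\,dx,
\]
I would regard the integrand as the product of $|\Omega(x-y)|^{p}$ and $w(x)$ and split it by Hölder's inequality with the conjugate pair $\left(  \frac{s}{p},\left(  \frac{s}{p}\right)  ^{\prime}\right)  $. This split is legitimate precisely because the hypothesis $1<p<s$ forces $\frac{s}{p}>1$, so that $\left(  \frac{s}{p}\right)  ^{\prime}=\frac{s}{s-p}$ is a genuine exponent $>1$, matching the computation $\left(  \frac{s}{p}\right)  ^{\prime}=\frac{s}{s-p}$ recorded in the preceding proposition.

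Applying Hölder with this pair then yields
\[
\int \limits_{B}|\Omega(x-y)|^{p}\,w(x)\,dx\leq \left(  \int \limits_{B}|\Omega(x-y)|^{p\cdot \frac{s}{p}}\,dx\right)  ^{\frac{p}{s}}\left(  \int \limits_{B}w(x)^{\left(  \frac{s}{p}\right)  ^{\prime}}\,dx\right)  ^{\frac{1}{\left(  s/p\right)  ^{\prime}}}.
\]
Since $p\cdot \frac{s}{p}=s$, the first factor on the right is exactly $\left \Vert \Omega(\cdot-y)\right \Vert _{L_{s}(B)}^{p}$, while the second factor is by definition $\left \Vert w\right \Vert _{L_{(s/p)^{\prime}}(B)}$. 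Taking $p$-th roots of both sides gives the asserted inequality, in fact with implied constant equal to $1$.

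The only point deserving comment is the role of the hypothesis $w^{1-p^{\prime}}\in A_{\frac{p^{\prime}}{s^{\prime}}}$, and here I expect no real obstacle: this assumption is \emph{not} used in deriving the inequality itself, since the Hölder argument above is entirely unconditional. Rather, it is the standing assumption under which this lemma is later invoked, and it serves—together with the identities (\ref{8}) and (\ref{9})—to guarantee that the factor $\left \Vert w\right \Vert _{L_{(s/p)^{\prime}}(B)}$ appearing on the right is finite and can be converted back into expressions involving $w(B)$ and $[w^{1-p^{\prime}}]_{A_{p^{\prime}}(B)}$ in the applications. Thus the lemma itself requires only the correct bookkeeping of exponents, after which the estimate is immediate.
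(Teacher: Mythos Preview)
Your proof is correct. The paper does not supply its own proof of this lemma---it is quoted from reference \cite{Kuzu}---so there is nothing to compare against, but your single application of H\"{o}lder's inequality with the conjugate pair $\bigl(\tfrac{s}{p},(\tfrac{s}{p})'\bigr)$ is precisely the intended argument, and your observation that the hypothesis $w^{1-p'}\in A_{p'/s'}$ plays no role in the inequality itself (only in its later applications via (\ref{8})--(\ref{9})) is accurate.
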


The classical $A_{p}\left(  {{\mathbb{R}^{n}}}\right)  $ weight theory has
been introduced by Muckenhoupt in the study of weighted $L_{p}$-boundedness of
H--L maximal function in \cite{Muckenhoupt}.

It is known from \cite{Grafakos} that

\begin{lemma}
\label{lemma100}The following statements hold.

$(1)~$ If $w\in A_{p}$ for some $1\leq p<\infty$, then $w\in \Delta_{2}$.
Moreover, for all $\lambda>1$ we have
\[
w(\lambda B)\leq \lambda^{np}[w]_{A_{p}}w(B).
\]

$(2)~$ If $w\in A_{\infty}$, then $w\in \Delta_{2}$. Moreover, for all
$\lambda>1$ we have
\[
w(\lambda B)\leq2^{\lambda^{n}}[w]_{A_{\infty}}^{\lambda^{n}}w(B).
\]

$(3)~$ If $w\in A_{p}$ for some $1\leq p\leq \infty$, then there exit $C>0$ and
$\delta>0$ such that for any ball $B$ and a measurable set $S\subset B$,
\[
\frac{1}{[w]_{A_{p}}}\left(  \frac{|S|}{|B|}\right)  \leq \frac{w(S)}{w(B)}\leq
C\left(  \frac{|S|}{|B|}\right)  ^{\delta}.
\]

$(4)~$The function $w^{-\frac{1}{p-1}}$ is in $A_{p^{\prime}}$ where $\frac
{1}{p}+\frac{1}{p^{\prime}}=1$, $1<p<\infty$ with characteristic constant%
\[
\lbrack w^{-\frac{1}{p-1}}]_{A_{p^{\prime}}}=[w]_{A_{p}}^{\frac{1}{p-1}}.
\]

\end{lemma}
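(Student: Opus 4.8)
The plan is to read all four assertions as consequences of the defining $A_{p}$ inequality \eqref{2}, the single unifying tool being the elementary estimate obtained by testing that inequality against a subset. First I would record the key comparison: for $w\in A_{p}$ with $1<p<\infty$, any ball $B$ and any measurable $S\subset B$, Hölder's inequality with exponents $p,p'$ together with $-p'/p=1-p'$ gives
\[
\frac{|S|}{|B|}=\frac{1}{|B|}\int_{S}w^{\frac1p}w^{-\frac1p}\,dx\le \frac{1}{|B|}\Big(\int_{S}w\Big)^{\frac1p}\Big(\int_{B}w^{1-p'}\Big)^{\frac1{p'}}.
\]
Substituting the bound for $\int_{B}w^{1-p'}$ furnished by \eqref{2} collapses the second factor and yields
\[
\Big(\frac{|S|}{|B|}\Big)^{p}\le [w]_{A_{p}}\,\frac{w(S)}{w(B)}.
\]
This is the engine for (1) and for the lower estimate in (3); the case $p=1$ is handled the same way, replacing Hölder by the pointwise bound \eqref{5}.

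With this in hand, (1) is immediate: I apply the displayed inequality to the dilate $\lambda B$ in the role of $B$ and to $B$ itself in the role of $S$. Since $|B|/|\lambda B|=\lambda^{-n}$ one gets $\lambda^{-np}\le [w]_{A_{p}}\,w(B)/w(\lambda B)$, that is $w(\lambda B)\le \lambda^{np}[w]_{A_{p}}w(B)$, and in particular $w\in\Delta_{2}$. The lower estimate in (3) is the same inequality read as $w(S)/w(B)\ge [w]_{A_{p}}^{-1}(|S|/|B|)^{p}$. For (2) I would invoke $A_{\infty}=\bigcup_{1\le p<\infty}A_{p}$: a weight in $A_{\infty}$ lies in some $A_{p_{0}}$, so (1) already forces doubling, and the stated explicit constant $2^{\lambda^{n}}[w]_{A_{\infty}}^{\lambda^{n}}$ is then produced by covering $\lambda B$ by on the order of $\lambda^{n}$ congruent copies of $B$ and chaining the one-step comparison of the $w$-masses of overlapping equal balls.

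Part (4) is a purely algebraic identity between characteristic constants and needs neither covering nor Hölder. Writing $\sigma:=w^{-\frac1{p-1}}=w^{1-p'}$ and using $(p')'=p$, one checks $\sigma^{\,1-(p')'}=w^{(1-p')(1-p)}=w$ because $(1-p')(1-p)=1$. Hence, for every ball $B$, since $p'-1=\frac1{p-1}$,
\[
[\sigma]_{A_{p'}(B)}=\Big(\tfrac1{|B|}\!\int_{B}w^{1-p'}\Big)\Big(\tfrac1{|B|}\!\int_{B}w\Big)^{p'-1}=\Big[\Big(\tfrac1{|B|}\!\int_{B}w\Big)\Big(\tfrac1{|B|}\!\int_{B}w^{1-p'}\Big)^{p-1}\Big]^{\frac1{p-1}}.
\]
Taking the supremum over $B$ and pulling the positive power $\frac1{p-1}$ through the supremum gives $[\sigma]_{A_{p'}}=[w]_{A_{p}}^{1/(p-1)}$, which is exactly (4).

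The genuine difficulty, and the step I expect to be the main obstacle, is the upper estimate $w(S)/w(B)\le C(|S|/|B|)^{\delta}$ in (3). Unlike the others it cannot be extracted from \eqref{2} by Hölder alone, since it is the quantitative self-improving property of $A_{p}$ weights. The plan is to deduce it from the reverse Hölder inequality — the existence of $\varepsilon>0$ and $C>0$ with $\big(\tfrac1{|B|}\int_{B}w^{1+\varepsilon}\big)^{1/(1+\varepsilon)}\le \tfrac{C}{|B|}\int_{B}w$ for all balls $B$ — which is itself proved by a Calder\'{o}n--Zygmund stopping-time decomposition. Granting the reverse Hölder inequality, Hölder applied to $w(S)=\int_{S}w\le \big(\int_{B}w^{1+\varepsilon}\big)^{1/(1+\varepsilon)}|S|^{\varepsilon/(1+\varepsilon)}$ and normalizing by $w(B)$ produces the claimed power $\delta=\varepsilon/(1+\varepsilon)$. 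This reverse-Hölder/stopping-time argument is the only part that is not a short manipulation, so it is where the real work sits.
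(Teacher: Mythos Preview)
The paper does not prove this lemma at all; it is quoted as a known fact with a citation to Grafakos~\cite{Grafakos}, so there is no in-paper argument to compare against. Your sketch is essentially the standard textbook proof found in that reference: the H\"older-based comparison $(|S|/|B|)^{p}\le[w]_{A_{p}}\,w(S)/w(B)$ yields~(1) and the lower half of~(3), part~(4) is the symmetric rewriting of the $A_{p}$ condition, and you correctly identify the upper half of~(3) as the reverse H\"older inequality obtained via a Calder\'on--Zygmund stopping-time argument.

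Two small remarks. First, the lower bound in~(3) as printed in the paper is missing the exponent~$p$ on $|S|/|B|$; what you actually prove---and what is true and appears in Grafakos---is $[w]_{A_{p}}^{-1}(|S|/|B|)^{p}\le w(S)/w(B)$. The linear version stated in the paper fails for general $A_{p}$ weights when $p>1$ (e.g.\ $w(x)=|x|^{a}$ with $0<a<p-1$). Second, your treatment of~(2) is the vaguest step: the phrase ``covering $\lambda B$ by on the order of $\lambda^{n}$ congruent copies of $B$ and chaining the one-step comparison'' does not transparently deliver the precise constant $2^{\lambda^{n}}[w]_{A_{\infty}}^{\lambda^{n}}$, and you would need to specify exactly which one-step inequality is being iterated and how the $A_{\infty}$ characteristic enters each step. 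Since the paper supplies no argument of its own, these are matters of filling in a textbook proof rather than any divergence from the paper's reasoning.
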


Komori and Shirai \cite{KomShir} have introduced a version of the weighted
Morrey space $L_{p,\kappa}(w)$, which is a natural generalization of the
weighted Lebesgue space $L_{p}(w)$, and have investigated the boundedness of
classical operators in harmonic analysis.

\begin{definition}
$\left(  \text{\textbf{Weighted Morrey space}}\right)  $ Let $1\leq p<\infty$,
$0<\kappa<1$ and $w$ be a weight function. We denote by $L_{p,\kappa}(w)\equiv
L_{p,\kappa}({\mathbb{R}^{n}},w)$ the weighted Morrey space of all classes of
locally integrable functions $f$ with the norm
\[
\Vert f\Vert_{L_{p,\kappa}(w)}=\sup \limits_{x\in{\mathbb{R}^{n}}%
,r>0}\,w(B(x,r))^{-\frac{\kappa}{p}}\, \Vert f\Vert_{L_{p,w}(B(x,r))}<\infty.
\]

Furthermore, by $WL_{p,\kappa}(w)\equiv WL_{p,\kappa}({\mathbb{R}^{n}},w)$ we
denote the weak weighted Morrey space of all classes of locally integrable
functions $f$ with the norm
\[
\|f\|_{WL_{p,\kappa}(w)} = \sup \limits_{x\in{\mathbb{R}^{n}}, r>0} \,
w(B(x,r))^{-\frac{\kappa}{p}} \, \|f\|_{WL_{p,w}(B(x,r))} < \infty.
\]

\end{definition}

\begin{remark}
Alternatively, we could define the weighted Morrey spaces with cubes instead
of balls. Hence we shall use these two definitions of weighted Morrey spaces
appropriate to calculation.
\end{remark}

\begin{remark}
$(1)~$ If $w\equiv1$ and $\kappa=\lambda/n$ with $0\leq \lambda \leq n$, then
$L_{p,\lambda/n}(1)=M_{p,\lambda}({{\mathbb{R}^{n}}})$ is the classical Morrey spaces.

$(2)~$ If $\kappa=0,$ then $L_{p,0}(w)=L_{p}(w)$ is the weighted Lebesgue spaces.
\end{remark}

The following theorem has been proved in \cite{KomShir}.

\begin{theorem}
\label{teo9*}Let $1\leq p<\infty$, $0<\kappa<1$ and $w\in A_{p}$. Then the
operators $M$ and $\overline{T}$ are bounded on $L_{p,\kappa}\left(  w\right)
$ for $p>1$ and from $L_{1,\kappa}\left(  w\right)  $ to $WL_{1,\kappa}\left(
w\right)  $.
\end{theorem}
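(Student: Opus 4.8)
The plan is to fix an arbitrary ball $B=B(x_{0},r)$, estimate the localized quantity $w(B)^{-\kappa/p}\,\Vert \overline{T}f\Vert_{L_{p,w}(B)}$ (and likewise for $M$), and then take the supremum over all balls. Following the standard Morrey-space scheme I would decompose $f=f_{1}+f_{2}$ with $f_{1}=f\chi_{2B}$ and $f_{2}=f\chi_{(2B)^{C}}$, and handle the two pieces by entirely different mechanisms. For the local part $f_{1}$ I would invoke the weighted $L_{p}(w)$ theory on all of ${\mathbb{R}^{n}}$: since $w\in A_{p}$, Muckenhoupt's theorem gives $\Vert Mf_{1}\Vert_{L_{p,w}}\lesssim \Vert f_{1}\Vert_{L_{p,w}}$ and the weighted Calder\'{o}n--Zygmund theory gives $\Vert \overline{T}f_{1}\Vert_{L_{p,w}}\lesssim \Vert f_{1}\Vert_{L_{p,w}}$ (with the corresponding weak $(1,1)$ bounds when $p=1$). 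Restricting to $B$ and using $\Vert f_{1}\Vert_{L_{p,w}}=\Vert f\Vert_{L_{p,w}(2B)}\leq w(2B)^{\kappa/p}\Vert f\Vert_{L_{p,\kappa}(w)}$ together with the doubling estimate $w(2B)\lesssim w(B)$ from Lemma \ref{lemma100}$(1)$, I obtain $w(B)^{-\kappa/p}\Vert Mf_{1}\Vert_{L_{p,w}(B)}\lesssim \Vert f\Vert_{L_{p,\kappa}(w)}$, and identically for $\overline{T}$.

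The substantial step is the global part $f_{2}$. For $x\in B$ and $y\in(2B)^{C}$ one has $|x-y|\approx|x_{0}-y|$, so the size/kernel bound yields the pointwise estimate $|\overline{T}f_{2}(x)|\lesssim \sum_{k=1}^{\infty}|2^{k+1}B|^{-1}\int_{2^{k+1}B}|f(y)|\,dy$, and an analogous dyadic-annulus bound controls $Mf_{2}(x)$. I would estimate each average, for $1<p<\infty$, by H\"{o}lder's inequality split across the weight, $\int_{2^{k+1}B}|f|\leq \Vert f\Vert_{L_{p,w}(2^{k+1}B)}\,\Vert w^{-1/p}\Vert_{L_{p^{\prime}}(2^{k+1}B)}$, then apply the $A_{p}$ inequality (\ref{3}) to replace $\Vert w^{-1/p}\Vert_{L_{p^{\prime}}(2^{k+1}B)}$ by $\lesssim|2^{k+1}B|\,w(2^{k+1}B)^{-1/p}$, and the Morrey bound $\Vert f\Vert_{L_{p,w}(2^{k+1}B)}\leq w(2^{k+1}B)^{\kappa/p}\Vert f\Vert_{L_{p,\kappa}(w)}$. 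Each term then collapses to a multiple of $w(2^{k+1}B)^{(\kappa-1)/p}\Vert f\Vert_{L_{p,\kappa}(w)}$. For the endpoint $p=1$ the H\"{o}lder step degenerates and I would instead pull out $\Vert w^{-1}\Vert_{L_{\infty}(2^{k+1}B)}\lesssim|2^{k+1}B|\,w(2^{k+1}B)^{-1}$ via (\ref{6*}), arriving at the same form of term.

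The remaining issue is summability of the series $\sum_{k}w(2^{k+1}B)^{(\kappa-1)/p}$, and this is precisely where both $\kappa<1$ and $w\in A_{p}$ are essential. The reverse-doubling estimate in Lemma \ref{lemma100}$(3)$, applied to $B\subset 2^{k+1}B$, gives $w(B)/w(2^{k+1}B)\leq C\,2^{-(k+1)n\delta}$, hence $w(2^{k+1}B)^{(\kappa-1)/p}\lesssim 2^{(k+1)n\delta(\kappa-1)/p}\,w(B)^{(\kappa-1)/p}$; since $(\kappa-1)/p<0$ the geometric series converges. This produces the uniform pointwise bound $|\overline{T}f_{2}(x)|\lesssim w(B)^{(\kappa-1)/p}\Vert f\Vert_{L_{p,\kappa}(w)}$ for $x\in B$. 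Integrating $w$ over $B$ recovers the factor $w(B)^{1/p}$, so $w(B)^{-\kappa/p}\Vert \overline{T}f_{2}\Vert_{L_{p,w}(B)}\lesssim \Vert f\Vert_{L_{p,\kappa}(w)}$, and the same for $Mf_{2}$.

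Combining the local and global estimates and taking the supremum over all $B$ settles the strong case $1<p<\infty$. For the weak endpoint the identical splitting applies: the weak $(1,1)$ weighted bound is used only on $f_{1}$, while $f_{2}$ is controlled pointwise on $B$ by the argument above, so its contribution to the weak norm restricted to $B$ is harmless. I expect the only genuine obstacle to be organizing the dyadic-annulus sum so that the $A_{\infty}$/reverse-doubling decay cleanly dominates the growth of $w(2^{k+1}B)$; once the uniformity of the constants in $k$ is tracked, everything else reduces to citing standard weighted $L_{p}$ theory.
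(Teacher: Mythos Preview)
Your argument is correct and is essentially the original Komori--Shirai proof that the paper cites for this theorem. The paper itself does not prove Theorem~\ref{teo9*} directly; it quotes it from \cite{KomShir} and later recovers it as Corollary~\ref{corollary 4} by specializing the general machinery (Lemma~\ref{lemma2} and Theorem~\ref{teo9}) to the choice $\varphi_{1}(x,r)=\varphi_{2}(x,r)=w(B(x,r))^{(\kappa-1)/p}$. The underlying decomposition $f=f_{1}+f_{2}$ and the treatment of $f_{1}$ via weighted $L_{p}$ boundedness are the same in both routes. The difference lies in the handling of $f_{2}$: you sum over dyadic annuli $2^{k+1}B$ and use the reverse-doubling estimate from Lemma~\ref{lemma100}(3) to make $\sum_{k} w(2^{k+1}B)^{(\kappa-1)/p}$ converge, whereas the paper applies Fubini to rewrite the tail as the continuous integral $\int_{2r}^{\infty}\Vert f\Vert_{L_{p,w}(B(x_{0},t))}\,w(B(x_{0},t))^{-1/p}\,\frac{dt}{t}$ and then verifies the Zygmund-type condition (\ref{316}) for this particular $\varphi$ (a verification which again reduces to the same reverse-doubling bound). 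Your direct dyadic argument is shorter and self-contained for this one result; the paper's detour through $M_{p,\varphi}(w)$ buys a single framework from which the weighted Morrey case, the unweighted case, and the generalized Morrey case all fall out as corollaries.
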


\section{Sublinear operators with rough kernel generated by
{Calder\'{o}n-Zygmund operators} on the generalized weighted Morrey spaces
$M_{p,\varphi}\left(  w\right)  $}

The generalized weighted Morrey spaces $M_{p,\varphi}\left(  w\right)  $ have
been introduced by Guliyev \cite{Guliyev} and Karaman \cite{Karaman} as follows.

\begin{definition}
$\left(  \text{\textbf{Generalized weighted Morrey space}}\right)  $ Let
$1\leq p<\infty$, $\varphi(x,r)$ be a positive measurable function on
${\mathbb{R}^{n}}\times(0,\infty)$ and $w$ be non-negative measurable function
on ${\mathbb{R}^{n}}$. We denote by $M_{p,\varphi}(w)\equiv M_{p,\varphi
}({\mathbb{R}^{n}},w)$ the generalized weighted Morrey space, the space of all
classes of functions $f\in L_{p,w}^{loc}({\mathbb{R}^{n}})$ with finite norm
\[
\Vert f\Vert_{M_{p,\varphi}(w)}=\sup \limits_{x\in{\mathbb{R}^{n}},r>0}%
\varphi(x,r)^{-1}\,w(B(x,r))^{-\frac{1}{p}}\, \Vert f\Vert_{L_{p,w}(B(x,r))},
\]
where $L_{p,w}(B(x,r))$ denotes the weighted $L_{p,w}$-space of measurable
functions $f$ for which
\[
\Vert f\Vert_{L_{p,w}\left(  B\left(  x,r\right)  \right)  }\equiv \Vert
f\chi_{B\left(  x,r\right)  }\Vert_{L_{p,w}\left(  {\mathbb{R}^{n}}\right)
}=\left(
{\displaystyle \int \limits_{B\left(  x,r\right)  }}
|f(y)|^{p}w(y)dy\right)  ^{\frac{1}{p}}.
\]
Furthermore, by $WM_{p,\varphi}(w)\equiv WM_{p,\varphi}({\mathbb{R}^{n}},w) $
we denote the weak generalized weighted Morrey space of all classes of
functions $f\in WL_{p,w}^{loc}({\mathbb{R}^{n}})$ for which
\[
\Vert f\Vert_{WM_{p,\varphi}(w)}=\sup \limits_{x\in{\mathbb{R}^{n}},r>0}%
\varphi(x,r)^{-1}\,w(B(x,r))^{-\frac{1}{p}}\, \Vert f\Vert_{WL_{p,w}%
(B(x,r))}<\infty,
\]
where $WL_{p,w}(B(x,r))$ denotes the weighted weak $WL_{p,w}$-space of
measurable functions $f$ for which%
\[
\Vert f\Vert_{WL_{p,w}\left(  B\left(  x,r\right)  \right)  }\equiv \Vert
f\chi_{B\left(  x,r\right)  }\Vert_{WL_{p,w}\left(  {\mathbb{R}^{n}}\right)
}=\sup \limits_{t>0}tw\left(  \left \{  y\in B\left(  x,r\right)  {:}\left \vert
f\left(  y\right)  \right \vert >t\right \}  \right)  ^{\frac{1}{p}}<\infty.
\]

\end{definition}

\begin{remark}
$(1)~$ If $w\equiv1$, then $M_{p,\varphi}(1)=M_{p,\varphi}$ is the generalized
Morrey space.

$(2)~$ If $\varphi(x,r)\equiv w(B(x,r))^{\frac{\kappa-1}{p}}$, $0<\kappa<1 $,
then $M_{p,\varphi}(w)=L_{p,\kappa}(w)$ is the weighted Morrey space.

$(3)~$ If $\varphi(x,r)\equiv \nu(B(x,r))^{\frac{\kappa}{p}}w(B(x,r))^{-\frac
{1}{p}}$, $0<\kappa<1$, then $M_{p,\varphi}(w)=L_{p,\kappa}(\nu,w)$ is the two
weighted Morrey space.

$(4)~$ If $w\equiv1$ and $\varphi(x,r)=r^{\frac{\lambda-n}{p}}$ with
$0\leq \lambda \leq n$, then $M_{p,\varphi}(1)=M_{p,\lambda}$ is the classical
Morrey space and $WM_{p,\varphi}(1)=WM_{p,\lambda}$ is the weak Morrey space.

$(5)~$ If $\varphi(x,r)\equiv w(B(x,r))^{-\frac{1}{p}}$, then $M_{p,\varphi
}(w)=L_{p}(w)$ is the weighted Lebesgue space.
\end{remark}

Inspired by the above results, in this paper we are interested in the
boundedness of sublinear operators with rough kernel on generalized weighted
Morrey spaces and give bounded mean oscillation {space} estimates for their commutators.

In this section we prove boundedness of the operator $T_{\Omega}$ satisfying
(\ref{e1}) on the generalized weighted Morrey spaces $M_{p,\varphi}\left(
w\right)  $ by using the following main Lemma \ref{lemma2}.

\begin{theorem}
\label{teo1}$\left(  \text{see \cite{LuDingY}}\right)  $ Let $\Omega \in
L_{s}(S^{n-1})$, $s>1$, be homogeneous of degree zero, and $1\leq p<\infty$.
If $p$, $q$ and the weight function $w$ satisfy one of the following statements:

$\left(  i\right)  $ $s^{\prime}\leq p<\infty$, $p\neq1$ and $w\in A_{\frac
{p}{s^{\prime}}}$;

$\left(  ii\right)  $ $1<p\leq s$, $p\neq \infty$ and $w^{1-p^{\prime}}\in
A_{\frac{p^{\prime}}{s^{\prime}}}$;

$\left(  iii\right)  $ $1<p<\infty$, and $w^{s^{\prime}}\in A_{p}$,

then $\overline{T}_{\Omega}$ is bounded on $L_{p}(w)$.
\end{theorem}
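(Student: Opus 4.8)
The plan is to treat $\overline{T}_\Omega$ as a rough convolution singular integral and to exploit that $\Omega\in L_s(S^{n-1})$ forces its kernel $K(x)=\Omega(x/|x|)|x|^{-n}$ to satisfy an $L^s$-type H\"ormander condition, even though it has no pointwise smoothness. First I would record the two structural facts that drive everything. Decompose the kernel dyadically, $K=\sum_{j\in\mathbb{Z}}K_j$ with $K_j=K\,\chi_{\{2^j\le|x|<2^{j+1}\}}$, and use the cancellation $\int_{S^{n-1}}\Omega\,d\sigma=0$ together with $\Omega\in L_s$ to obtain Fourier transform estimates of the form $|\widehat{K_j}(\xi)|\lesssim\|\Omega\|_{L_s}\min\{(2^j|\xi|),(2^j|\xi|)^{-\delta}\}$ for some $\delta=\delta(s,n)>0$; summing over $j$ via Plancherel yields the unweighted $L_2$-boundedness of $\overline{T}_\Omega=\sum_j K_j*(\cdot)$. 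In parallel I would verify the $L^s$-H\"ormander condition: integrating $\Omega\in L_s(S^{n-1})$ over dyadic annuli gives $\|K(\cdot-y)-K(\cdot)\|_{L_s(\{|x|\approx\rho\})}\lesssim\|\Omega\|_{L_s}\,\rho^{-n/s'}$, which sums to the required bound.

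With these in hand, the core weighted estimate I would establish is a Coifman--Fefferman inequality: for every $w\in A_\infty$,
\[
\int_{\mathbb{R}^n}|\overline{T}_\Omega f(x)|^p\,w(x)\,dx\lesssim\int_{\mathbb{R}^n}\big(M_{s'}f(x)\big)^p\,w(x)\,dx,\qquad M_{s'}f:=\big(M(|f|^{s'})\big)^{1/s'}.
\]
This is proved by a good-$\lambda$ argument: on a Calder\'on--Zygmund ball one splits $f$ into local and global parts, controls the global part through the $L^s$-H\"ormander condition by pairing the kernel difference ($L^s$) against $f$ ($L^{s'}$) via H\"older, and controls the local part through the $L_2$-boundedness from the first step; the $A_\infty$ hypothesis then converts the resulting level-set inequality into the stated norm inequality.

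The three cases are concluded as follows. In case $(i)$, since $s'\le p$ we have $w\in A_{p/s'}\subset A_\infty$, and $M_{s'}$ is bounded on $L_p(w)$ precisely because $M$ is bounded on $L_{p/s'}(w)$; combined with the Coifman--Fefferman inequality this gives the claim. Case $(ii)$ follows by duality: the adjoint of $\overline{T}_\Omega$ is $\overline{T}_{\tilde\Omega}$ with $\tilde\Omega(x)=\Omega(-x)\in L_s$ still of mean zero, so $L_p(w)$-boundedness is equivalent to $L_{p'}(w^{1-p'})$-boundedness of $\overline{T}_{\tilde\Omega}$, and a direct computation shows the hypothesis $w^{1-p'}\in A_{p'/s'}$ is exactly case $(i)$ for the dual data. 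For case $(iii)$, where $w^{s'}\in A_p$ need not force either of the previous conditions, I would return to the dyadic pieces: using H\"older against the $L_s$ kernel one shows each $T_j$ is bounded on $L_p(w)$ with norm controlled by $\|\Omega\|_{L_s}$ whenever $w^{s'}\in A_p$, and the geometric Fourier decay from the first step lets these be summed (after interpolating the power-weight $L_2$ estimates with change of measure in the manner of Duoandikoetxea--Rubio de Francia) to recover boundedness of the full operator.

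The main obstacle is the core weighted (Coifman--Fefferman / good-$\lambda$) estimate for a genuinely rough kernel: with no gradient control available, every comparison with the maximal operator must go through the $L^s$-H\"ormander condition and H\"older's inequality, and one must track the exponent bookkeeping carefully so that the comparison produces $M_{s'}$ (hence the class $A_{p/s'}$) rather than $M$. The secondary difficulty is case $(iii)$, where the clean maximal-function comparison is unavailable and the weight condition $w^{s'}\in A_p$ must instead be fed into a summation over the dyadic pieces, keeping the dependence on $j$ summable against the Fourier decay.
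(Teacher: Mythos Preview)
The paper does not prove this theorem. It is stated with the attribution ``(see \cite{LuDingY})'' and is used as a black-box input to Lemma~\ref{lemma2}; no argument for it appears in the paper. So there is no ``paper's own proof'' to compare your proposal against.

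On the merits of your sketch: the overall architecture (dyadic decomposition, Fourier decay of the pieces, a Coifman--Fefferman-type domination by $M_{s'}$, then cases $(i)$--$(ii)$ via $A_{p/s'}$ and duality, case $(iii)$ via Duoandikoetxea--Rubio de Francia summation) is indeed the standard route in the literature. However, your description of the ``core weighted estimate'' contains a genuine gap. The bound you call the $L^s$-H\"ormander condition,
\[
\|K(\cdot-y)-K(\cdot)\|_{L_s(\{|x|\approx\rho\})}\lesssim\|\Omega\|_{L_s}\,\rho^{-n/s'},
\]
is just the trivial size estimate (triangle inequality plus $\|K\|_{L_s(\{|x|\approx\rho\})}\approx\|\Omega\|_{L_s}\rho^{-n/s'}$); it carries no gain in $|y|/\rho$, and therefore cannot drive a good-$\lambda$ argument by itself. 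For genuinely rough $\Omega\in L_s$ there is no pointwise or integral H\"ormander regularity available, and the Coifman--Fefferman inequality $\|\overline{T}_\Omega f\|_{L_p(w)}\lesssim\|M_{s'}f\|_{L_p(w)}$ is instead obtained through Littlewood--Paley theory: one compares $\overline{T}_\Omega$ with a square function built from the dyadic pieces, and it is the Fourier decay $|\widehat{K_j}(\xi)|\lesssim(2^j|\xi|)^{-\delta}$ (not any kernel regularity) that provides the summability. Your case-$(iii)$ paragraph actually invokes the correct mechanism; the mistake is in believing a kernel-difference estimate can shortcut it for the Coifman--Fefferman step.
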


We first prove the following main Lemma \ref{lemma2}.

\begin{lemma}
\label{lemma2}(Our main Lemma) Let $\Omega \in L_{s}(S^{n-1})$, $s>1$, be
homogeneous of degree zero, and $1\leq p<\infty$. Let $T_{\Omega}$ be a
sublinear operator satisfying condition (\ref{e1}), bounded on $L_{p}(w)$ for
$p>1$ and bounded from $L_{1}(w)$ to $WL_{1}(w)$.

If $p>1$, $s^{\prime}\leq p$ and $w\in A_{\frac{p}{s^{\prime}}}$, then the
inequality%
\begin{equation}
\left \Vert T_{\Omega}f\right \Vert _{L_{p,w}\left(  B\left(  x_{0},r\right)
\right)  }\lesssim w\left(  B\left(  x_{0},r\right)  \right)  ^{\frac{1}{p}%
}\int \limits_{2r}^{\infty}\left \Vert f\right \Vert _{L_{p,w}\left(  B\left(
x_{0},t\right)  \right)  }w\left(  B\left(  x_{0},t\right)  \right)
^{-\frac{1}{p}}\frac{dt}{t}\label{40}%
\end{equation}
holds for any ball $B\left(  x_{0},r\right)  $ and for all $f\in L_{p,w}%
^{loc}\left(  {\mathbb{R}^{n}}\right)  $.

If $p>1$, $p<s$ and $w^{1-p^{\prime}}\in A_{\frac{p^{\prime}}{s^{\prime}}} $,
then the inequality%
\[
\left \Vert T_{\Omega}f\right \Vert _{L_{p,w}\left(  B\left(  x_{0},r\right)
\right)  }\lesssim \left \Vert w\right \Vert _{L_{\frac{s}{s-p}}\left(  B\left(
x_{0},r\right)  \right)  }^{\frac{1}{p}}\int \limits_{2r}^{\infty}\left \Vert
f\right \Vert _{L_{p,w}\left(  B\left(  x_{0},t\right)  \right)  }\left \Vert
w\right \Vert _{L_{\frac{s}{s-p}}\left(  B\left(  x_{0},t\right)  \right)
}^{-\frac{1}{p}}\frac{dt}{t}%
\]
holds for any ball $B\left(  x_{0},r\right)  $ and for all $f\in L_{p,w}%
^{loc}\left(  {\mathbb{R}^{n}}\right)  $.

Moreover, for $s>1$ the inequality%
\begin{equation}
\left \Vert T_{\Omega}f\right \Vert _{WL_{1,w}\left(  B\left(  x_{0},r\right)
\right)  }\lesssim w\left(  B\left(  x_{0},r\right)  \right)  \int
\limits_{2r}^{\infty}\left \Vert f\right \Vert _{L_{1,w}\left(  B\left(
x_{0},t\right)  \right)  }w\left(  B\left(  x_{0},t\right)  \right)
^{-1}\frac{dt}{t}\label{e38}%
\end{equation}
holds for any ball $B\left(  x_{0},r\right)  $ and for all $f\in L_{1,w}%
^{loc}\left(  {\mathbb{R}^{n}}\right)  $.
\end{lemma}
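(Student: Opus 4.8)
The standard strategy for this kind of "Morrey-norm pointwise estimate" is to fix a ball $B=B(x_0,r)$, split the input $f=f_1+f_2$ where $f_1=f\chi_{2B}$ and $f_2=f\chi_{(2B)^C}$, estimate $T_\Omega f_1$ by the assumed $L_p(w)$-boundedness, and control $T_\Omega f_2$ by direct integration of the kernel, then recombine. I would carry this out as follows.

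First I would write $\|T_\Omega f\|_{L_{p,w}(B)}\le \|T_\Omega f_1\|_{L_{p,w}(B)}+\|T_\Omega f_2\|_{L_{p,w}(B)}$. For the local piece $f_1$, under the hypothesis of case (i) the operator $T_\Omega$ is bounded on $L_p(w)$ (this is exactly the assumption in the Lemma, matching Theorem \ref{teo1}), so $\|T_\Omega f_1\|_{L_{p,w}(B)}\le \|T_\Omega f_1\|_{L_{p,w}(\mathbb{R}^n)}\lesssim \|f\|_{L_{p,w}(2B)}$. To convert this into the integral form on the right-hand side of \eqref{40}, I would use the elementary observation that since $t\mapsto \|f\|_{L_{p,w}(B(x_0,t))}$ is nondecreasing while $t\mapsto w(B(x_0,t))^{-1/p}$ is nonincreasing, one has
\[
\|f\|_{L_{p,w}(2B)}\,w(2B)^{-\frac1p}\lesssim \int_{2r}^{\infty}\|f\|_{L_{p,w}(B(x_0,t))}\,w(B(x_0,t))^{-\frac1p}\frac{dt}{t},
\]
because the integrand is bounded below by the value at $t=2r$ times a convergent logarithmic factor; multiplying through by $w(B)^{1/p}\approx w(2B)^{1/p}$ (the doubling property from Lemma \ref{lemma100}(1)) gives the desired bound for the local term.

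The main work is the tail term $T_\Omega f_2$. For $x\in B$ and $y\in(2B)^C$ one has $|x-y|\approx|x_0-y|$, so condition \eqref{e1} yields the pointwise bound
\[
|T_\Omega f_2(x)|\lesssim \int_{(2B)^C}\frac{|\Omega(x-y)|}{|x_0-y|^{n}}|f(y)|\,dy
\lesssim \int_{2r}^{\infty}\Big(\int_{B(x_0,t)}|\Omega(x-y)||f(y)|\,dy\Big)\frac{dt}{t^{n+1}},
\]
after decomposing the annular region $\{|x_0-y|>2r\}$ into dyadic-type shells and using $\int_{|x_0-y|>2r}(\cdots)=\int_{2r}^\infty(\cdots)\,\frac{dt}{t}$. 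The inner integral is handled by Hölder's inequality with exponents adapted to $\Omega\in L_s$: splitting $|\Omega(x-y)||f(y)|$ and applying the three-factor Hölder inequality (the $L_s$ factor for $\Omega$, the $L_{p,w}$ factor for $f$, and a remaining weight factor), then invoking Lemma \ref{lemma10} to absorb $\|\Omega(\cdot-y)\|_{L_{p,w}(B)}$, produces the factor $\|f\|_{L_{p,w}(B(x_0,t))}\,w(B(x_0,t))^{-1/p}$ together with a harmless power of $t$ that cancels against $t^{-n}$ via $w(B(x_0,t))\lesssim t^n$-type estimates coming from the $A_p$ structure. Taking the $L_{p,w}(B)$-norm in $x$ and using that the bound is essentially constant in $x$ over $B$ then multiplies by $w(B)^{1/p}$, giving \eqref{40}.

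The weak-type estimate \eqref{e38} follows the same splitting, now using that $T_\Omega$ maps $L_1(w)\to WL_1(w)$ for the local term and that the pointwise tail bound for $f_2$ is an actual $L_\infty$ bound (hence automatically weak-$L_1$) for the far term; the dyadic integral manipulation is identical with $p=1$. The second displayed inequality (case $p<s$, $w^{1-p'}\in A_{p'/s'}$) is proved by exactly the same two-step decomposition, except that in the tail estimate the Hölder exponents are arranged so that the weight is measured in $L_{s/(s-p)}$ rather than through $w(B)$ directly, and one invokes Propositions using \eqref{8}–\eqref{9} together with Lemma \ref{lemma10} to replace $w(B(x_0,t))^{-1/p}$ by $\|w\|_{L_{s/(s-p)}(B(x_0,t))}^{-1/p}$. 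The main obstacle throughout is the bookkeeping in the Hölder step for the tail: one must choose the three exponents so that the $\Omega$-factor lands in $L_s$, the $f$-factor in $L_{p,w}$, and the leftover weight factor matches precisely the normalizing quantity ($w(B(x_0,t))^{-1/p}$ or $\|w\|_{L_{s/(s-p)}}^{-1/p}$) demanded by the conclusion, and then verify that the residual powers of $t$ integrate to the clean $dt/t$ form.
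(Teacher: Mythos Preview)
Your plan follows the paper's proof closely: the same $f=f_1+f_2$ splitting with $f_1=f\chi_{2B}$, the assumed $L_p(w)$-boundedness for $f_1$, and the pointwise kernel bound plus Fubini and H\"older for $f_2$. Two spots in your write-up need tightening, though neither is a structural gap.

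First, your justification of
\[
\|f\|_{L_{p,w}(2B)}\,w(2B)^{-1/p}\lesssim\int_{2r}^{\infty}\|f\|_{L_{p,w}(B(x_0,t))}\,w(B(x_0,t))^{-1/p}\,\frac{dt}{t}
\]
is not correct as stated: the integrand is \emph{not} bounded below by its value at $t=2r$ for all $t\ge 2r$, since $w(B(x_0,t))^{-1/p}$ decreases. The inequality is true, but you should either restrict the integral to $[2r,4r]$ and invoke doubling (Lemma~\ref{lemma100}(1)), or follow the paper's device of writing $\|f\|_{L_{p,w}(2B)}\approx |B|\,\|f\|_{L_{p,w}(2B)}\int_{2r}^\infty t^{-n-1}\,dt$ and then inserting the weight factors through \eqref{1} and \eqref{3}.

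Second, you invoke Lemma~\ref{lemma10} in the tail estimate for case (i) ($s'\le p$, $w\in A_{p/s'}$), but that lemma controls $\|\Omega(\cdot-y)\|_{L_{p,w}(B)}$ and is used in the paper only in case (ii) ($p<s$), \emph{after} applying Minkowski to pull the $L_{p,w}(B)$-norm in $x$ inside the $y$-integral. In case (i) the paper instead does H\"older in $y$ with exponents $s,s'$, bounds $\|\Omega(x-\cdot)\|_{L_s(B(x_0,t))}$ directly by the computation \eqref{10}, and then splits $\|f\|_{L_{s'}(B(x_0,t))}$ into $\|f\|_{L_{p,w}(B(x_0,t))}$ times the dual-weight factor $\|w^{-s'/p}\|_{L_{(p/s')'}(B(x_0,t))}^{1/s'}$, which is handled by \eqref{3}. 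Your phrase ``$w(B(x_0,t))\lesssim t^n$-type estimates'' is also off: what actually cancels the $t^{-n}$ is $|B(x_0,t)|^{1/s'}|B(x_0,2t)|^{1/s}\approx t^n$, with the weight factor $w(B(x_0,t))^{-1/p}$ remaining intact. Once these bookkeeping points are straightened out, your argument coincides with the paper's.
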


\begin{proof}
For $x\in B\left(  x_{0},t\right)  $, notice that $\Omega$ is homogenous of
degree zero and $\Omega \in L_{s}(S^{n-1})$, $s>1$. Then, we obtain%
\begin{align}
\left(  \int \limits_{B\left(  x_{0},t\right)  }\left \vert \Omega \left(
x-y\right)  \right \vert ^{s}dy\right)  ^{\frac{1}{s}}  & =\left(
\int \limits_{B\left(  x-x_{0},t\right)  }\left \vert \Omega \left(  z\right)
\right \vert ^{s}dz\right)  ^{\frac{1}{s}}\nonumber \\
& \leq \left(  \int \limits_{B\left(  0,t+\left \vert x-x_{0}\right \vert \right)
}\left \vert \Omega \left(  z\right)  \right \vert ^{s}dz\right)  ^{\frac{1}{s}%
}\nonumber \\
& \leq \left(  \int \limits_{B\left(  0,2t\right)  }\left \vert \Omega \left(
z\right)  \right \vert ^{s}dz\right)  ^{\frac{1}{s}}\nonumber \\
& =\left(  \int \limits_{0}^{2t}\int \limits_{S^{n-1}}\left \vert \Omega \left(
z^{\prime}\right)  \right \vert ^{s}d\sigma \left(  z^{\prime}\right)
r^{n-1}dr\right)  ^{\frac{1}{s}}\nonumber \\
& =C\left \Vert \Omega \right \Vert _{L_{s}\left(  S^{n-1}\right)  }\left \vert
B\left(  x_{0},2t\right)  \right \vert ^{\frac{1}{s}}.\label{10}%
\end{align}
Let $1<p<\infty$, $s^{\prime}\leq p$ and $w\in A_{\frac{p}{s^{\prime}}}$. For
any $x_{0}\in{\mathbb{R}^{n}}$, set $B=B\left(  x_{0},r\right)  $ for the ball
centered at $x_{0}$ and of radius $r$ and $2B=B\left(  x_{0},2r\right)  $. We
represent $f$ as%
\begin{equation}
f=f_{1}+f_{2},\text{ \  \ }f_{1}\left(  y\right)  =f\left(  y\right)  \chi
_{2B}\left(  y\right)  ,\text{ \  \ }f_{2}\left(  y\right)  =f\left(  y\right)
\chi_{\, \! \left(  2B\right)  ^{C}}\left(  y\right)  ,\text{ \  \ }%
r>0\label{e39}%
\end{equation}
and have%
\[
\left \Vert T_{\Omega}f\right \Vert _{L_{p,w}\left(  B\right)  }\leq \left \Vert
T_{\Omega}f_{1}\right \Vert _{L_{p,w}\left(  B\right)  }+\left \Vert T_{\Omega
}f_{2}\right \Vert _{L_{p,w}\left(  B\right)  }.
\]

Since $f_{1}\in L_{p}\left(  w\right)  $, $T_{\Omega}f_{1}\in L_{p}\left(
w\right)  $ and by the boundedness of $T_{\Omega}$ on $L_{p}\left(  w\right)
$ (see Theorem \ref{teo1}) it follows that:%
\[
\left \Vert T_{\Omega}f_{1}\right \Vert _{L_{p,w}\left(  B\right)  }%
\leq \left \Vert T_{\Omega}f_{1}\right \Vert _{L_{p,w}\left(
\mathbb{R}
^{n}\right)  }\leq C\left \Vert f_{1}\right \Vert _{L_{p,w}\left(
\mathbb{R}
^{n}\right)  }=C\left \Vert f\right \Vert _{L_{p,w}\left(  2B\right)  },
\]
where constant $C>0$ is independent of $f$.

It is clear that $x\in B$, $y\in \left(  2B\right)  ^{C}$ implies \ $\frac
{1}{2}\left \vert x_{0}-y\right \vert \leq \left \vert x-y\right \vert \leq \frac
{3}{2}\left \vert x_{0}-y\right \vert $. We get%
\[
\left \vert T_{\Omega}f_{2}\left(  x\right)  \right \vert \leq2^{n}c_{1}%
\int \limits_{\left(  2B\right)  ^{C}}\frac{\left \vert f\left(  y\right)
\right \vert \left \vert \Omega \left(  x-y\right)  \right \vert }{\left \vert
x_{0}-y\right \vert ^{n}}dy.
\]

By the Fubini's theorem, we have%
\begin{align}
\int \limits_{\left(  2B\right)  ^{C}}\frac{\left \vert f\left(  y\right)
\right \vert \left \vert \Omega \left(  x-y\right)  \right \vert }{\left \vert
x_{0}-y\right \vert ^{n}}dy  & \approx \int \limits_{\left(  2B\right)  ^{C}%
}\left \vert f\left(  y\right)  \right \vert \left \vert \Omega \left(
x-y\right)  \right \vert \int \limits_{\left \vert x_{0}-y\right \vert }^{\infty
}\frac{dt}{t^{n+1}}dy\nonumber \\
& \approx \int \limits_{2r}^{\infty}\int \limits_{2r\leq \left \vert x_{0}%
-y\right \vert \leq t}\left \vert f\left(  y\right)  \right \vert \left \vert
\Omega \left(  x-y\right)  \right \vert dy\frac{dt}{t^{n+1}}\nonumber \\
& \lesssim \int \limits_{2r}^{\infty}\int \limits_{B\left(  x_{0},t\right)
}\left \vert f\left(  y\right)  \right \vert \left \vert \Omega \left(
x-y\right)  \right \vert dy\frac{dt}{t^{n+1}}.\label{100}%
\end{align}

Applying the H\"{o}lder's inequality and by (\ref{10}) and (\ref{3}), we get%
\begin{align}
& \int \limits_{\left(  2B\right)  ^{C}}\frac{\left \vert f\left(  y\right)
\right \vert \left \vert \Omega \left(  x-y\right)  \right \vert }{\left \vert
x_{0}-y\right \vert ^{n}}dy\nonumber \\
& \lesssim \int \limits_{2r}^{\infty}\left \Vert \Omega \left(  x-\cdot \right)
\right \Vert _{L_{s}\left(  B\left(  x_{0},t\right)  \right)  }\left \Vert
f\right \Vert _{L_{s^{\prime}}\left(  B\left(  x_{0},t\right)  \right)  }%
\frac{dt}{t^{n+1}}\nonumber \\
& \lesssim \int \limits_{2r}^{\infty}\left \Vert f\right \Vert _{L_{p,w}\left(
B\left(  x_{0},t\right)  \right)  }\left \Vert w^{-\frac{s^{\prime}}{p}%
}\right \Vert _{L_{\left(  \frac{p}{s^{\prime}}\right)  ^{\prime}}\left(
B\left(  x_{0},t\right)  \right)  }^{\frac{1}{s^{\prime}}}\left \vert B\left(
x_{0},2t\right)  \right \vert ^{\frac{1}{s}}\frac{dt}{t^{n+1}}\nonumber \\
& \lesssim \int \limits_{2r}^{\infty}\left \Vert f\right \Vert _{L_{p,w}\left(
B\left(  x_{0},t\right)  \right)  }w\left(  B\left(  x_{0},t\right)  \right)
^{-\frac{1}{p}}\left \vert B\left(  x_{0},t\right)  \right \vert ^{\frac
{1}{s^{\prime}}}\left \vert B\left(  x_{0},2t\right)  \right \vert ^{\frac{1}%
{s}}\frac{dt}{t^{n+1}}\nonumber \\
& \lesssim \int \limits_{2r}^{\infty}\left \Vert f\right \Vert _{L_{p,w}\left(
B\left(  x_{0},t\right)  \right)  }w\left(  B\left(  x_{0},t\right)  \right)
^{-\frac{1}{p}}\frac{dt}{t}.\label{11}%
\end{align}

Thus, by (\ref{11}), it follows that:%
\[
\left \vert T_{\Omega}f_{2}\left(  x\right)  \right \vert \lesssim
\int \limits_{2r}^{\infty}\left \Vert f\right \Vert _{L_{p,w}\left(  B\left(
x_{0},t\right)  \right)  }w\left(  B\left(  x_{0},t\right)  \right)
^{-\frac{1}{p}}\frac{dt}{t}.
\]

Moreover, for all $p\in \left[  1,\infty \right)  $ the inequality%
\begin{equation}
\left \Vert T_{\Omega}f_{2}\right \Vert _{L_{p,w}\left(  B\right)  }\lesssim
w\left(  B\left(  x_{0},r\right)  \right)  ^{\frac{1}{p}}\int \limits_{2r}%
^{\infty}\left \Vert f\right \Vert _{L_{p,w}\left(  B\left(  x_{0},t\right)
\right)  }w\left(  B\left(  x_{0},t\right)  \right)  ^{-\frac{1}{p}}\frac
{dt}{t}\label{e312}%
\end{equation}

is valid. Thus,%
\[
\left \Vert T_{\Omega}f\right \Vert _{L_{p,w}\left(  B\right)  }\lesssim
\left \Vert f\right \Vert _{L_{p,w}\left(  2B\right)  }+w\left(  B\left(
x_{0},r\right)  \right)  ^{\frac{1}{p}}\int \limits_{2r}^{\infty}\left \Vert
f\right \Vert _{L_{p,w}\left(  B\left(  x_{0},t\right)  \right)  }w\left(
B\left(  x_{0},t\right)  \right)  ^{-\frac{1}{p}}\frac{dt}{t}.
\]

On the other hand, it is clear that $w\in A_{\frac{p}{s^{\prime}}}$ implies
$w\in A_{p}$, by (\ref{1}) and (\ref{3}) we have%
\begin{align}
\left \Vert f\right \Vert _{L_{p,w}\left(  2B\right)  }  & \approx \left \vert
B\right \vert \left \Vert f\right \Vert _{L_{p,w}\left(  2B\right)  }%
\int \limits_{2r}^{\infty}\frac{dt}{t^{n+1}}\nonumber \\
& \lesssim \left \vert B\right \vert \int \limits_{2r}^{\infty}\left \Vert
f\right \Vert _{L_{p,w}\left(  B\left(  x_{0},t\right)  \right)  }\frac
{dt}{t^{n+1}}\nonumber \\
& \lesssim w\left(  B\left(  x_{0},r\right)  \right)  ^{\frac{1}{p}}\left \Vert
w^{-\frac{1}{p}}\right \Vert _{L_{p^{\prime}}\left(  B\right)  }\int
\limits_{2r}^{\infty}\left \Vert f\right \Vert _{L_{p,w}\left(  B\left(
x_{0},t\right)  \right)  }\frac{dt}{t^{n+1}}\nonumber \\
& \lesssim w\left(  B\left(  x_{0},r\right)  \right)  ^{\frac{1}{p}}%
\int \limits_{2r}^{\infty}\left \Vert f\right \Vert _{L_{p,w}\left(  B\left(
x_{0},t\right)  \right)  }\left \Vert w^{-\frac{1}{p}}\right \Vert
_{L_{p^{\prime}}\left(  B\left(  x_{0},t\right)  \right)  }\frac{dt}{t^{n+1}%
}\nonumber \\
& \lesssim w\left(  B\left(  x_{0},r\right)  \right)  ^{\frac{1}{p}}%
\int \limits_{2r}^{\infty}\left \Vert f\right \Vert _{L_{p,w}\left(  B\left(
x_{0},t\right)  \right)  }w\left(  B\left(  x_{0},t\right)  \right)
^{-\frac{1}{p}}\frac{dt}{t}.\label{12}%
\end{align}

By combining the above inequalities, we obtain%
\[
\left \Vert T_{\Omega}f\right \Vert _{L_{p,w}\left(  B\left(  x_{0},r\right)
\right)  }\lesssim w\left(  B\left(  x_{0},r\right)  \right)  ^{\frac{1}{p}%
}\int \limits_{2r}^{\infty}\left \Vert f\right \Vert _{L_{p,w}\left(  B\left(
x_{0},t\right)  \right)  }w\left(  B\left(  x_{0},t\right)  \right)
^{-\frac{1}{p}}\frac{dt}{t}.
\]

Let $1<p<s$ and $w^{1-p^{\prime}}\in A_{\frac{p^{\prime}}{s^{\prime}}}$.
Similarly to (\ref{10}), when $y\in B\left(  x_{0},t\right)  $, it is true
that%
\begin{equation}
\left(  \int \limits_{B\left(  x_{0},r\right)  }\left \vert \Omega \left(
x-y\right)  \right \vert ^{s}dy\right)  ^{\frac{1}{s}}\leq C\left \Vert
\Omega \right \Vert _{L_{s}\left(  S^{n-1}\right)  }\left \vert B\left(
x_{0},\frac{3}{2}t\right)  \right \vert ^{\frac{1}{s}}.\label{314}%
\end{equation}

By the Fubini's theorem, the Minkowski inequality, Lemma \ref{lemma10},
(\ref{314}) and the H\"{o}lder's inequality, respectively we get%
\begin{align*}
\left \Vert T_{\Omega}f_{2}\right \Vert _{L_{p,w}\left(  B\right)  }  &
\leq \left(  \int \limits_{B}\left \vert \int \limits_{2r}^{\infty}\int
\limits_{B\left(  x_{0},t\right)  }\left \vert f\left(  y\right)  \right \vert
\left \vert \Omega \left(  x-y\right)  \right \vert dy\frac{dt}{t^{n+1}%
}\right \vert ^{p}w\left(  x\right)  dx\right)  ^{\frac{1}{p}}\\
& \leq \int \limits_{2r}^{\infty}\int \limits_{B\left(  x_{0},t\right)
}\left \vert f\left(  y\right)  \right \vert \left \Vert \Omega \left(
\cdot-y\right)  \right \Vert _{L_{p,w}\left(  B\right)  }dy\frac{dt}{t^{n+1}}\\
& \lesssim \int \limits_{2r}^{\infty}\int \limits_{B\left(  x_{0},t\right)
}\left \vert f\left(  y\right)  \right \vert \left \Vert \Omega \left(
\cdot-y\right)  \right \Vert _{L_{s}\left(  B\right)  }\left \Vert w\right \Vert
_{L_{\left(  \frac{s}{p}\right)  ^{\prime}}\left(  B\right)  }^{\frac{1}{p}%
}dy\frac{dt}{t^{n+1}}\\
& \lesssim \left \Vert w\right \Vert _{L_{\left(  \frac{s}{p}\right)  ^{\prime}%
}\left(  B\right)  }^{\frac{1}{p}}\int \limits_{2r}^{\infty}\left \Vert
f\right \Vert _{L_{1}\left(  B\left(  x_{0},t\right)  \right)  }\left \vert
B\left(  x_{0},\frac{3}{2}t\right)  \right \vert ^{\frac{1}{s}}\frac
{dt}{t^{n+1}}\\
& \lesssim \left \Vert w\right \Vert _{L_{\frac{s}{s-p}}\left(  B\left(
x_{0},r\right)  \right)  }^{\frac{1}{p}}\int \limits_{2r}^{\infty}\left \Vert
f\right \Vert _{L_{p,w}\left(  B\left(  x_{0},t\right)  \right)  }\left \Vert
w^{-\frac{p^{\prime}}{p}}\right \Vert _{L_{1}\left(  B\left(  x_{0},t\right)
\right)  }^{\frac{1}{p^{\prime}}}\left \vert B\left(  x_{0},\frac{3}%
{2}t\right)  \right \vert ^{\frac{1}{s}}\frac{dt}{t^{n+1}}\\
& \lesssim \left \vert B\left(  x_{0},r\right)  \right \vert ^{\frac{1}{s}%
}\left \Vert w\right \Vert _{L_{\frac{s}{s-p}}\left(  B\left(  x_{0},r\right)
\right)  }^{\frac{1}{p}}\int \limits_{2r}^{\infty}\left \Vert f\right \Vert
_{L_{p,w}\left(  B\left(  x_{0},t\right)  \right)  }\left \Vert w^{1-p^{\prime
}}\right \Vert _{L_{1}\left(  B\left(  x_{0},t\right)  \right)  }^{\frac
{1}{p^{\prime}}}\left \vert B\left(  x_{0},\frac{3}{2}t\right)  \right \vert
^{\frac{1}{s}}\frac{dt}{t^{n+1}}.
\end{align*}
Applying (\ref{7}) and (\ref{9}) for $\left \Vert w^{1-p^{\prime}}\right \Vert
_{L_{1}\left(  B\left(  x_{0},t\right)  \right)  }^{\frac{1}{p^{\prime}}}$ and
$\left \Vert w\right \Vert _{L_{\frac{s}{s-p}}\left(  B\left(  x_{0},r\right)
\right)  }^{\frac{1}{p}}$, respectively we have%
\[
\left \Vert T_{\Omega}f_{2}\right \Vert _{L_{p,w}\left(  B\left(  x_{0}%
,r\right)  \right)  }\lesssim \left \Vert w\right \Vert _{L_{\frac{s}{s-p}%
}\left(  B\left(  x_{0},r\right)  \right)  }^{\frac{1}{p}}\int \limits_{2r}%
^{\infty}\left \Vert f\right \Vert _{L_{p,w}\left(  B\left(  x_{0},t\right)
\right)  }\left \Vert w\right \Vert _{L_{\frac{s}{s-p}}\left(  B\left(
x_{0},t\right)  \right)  }^{-\frac{1}{p}}\frac{dt}{t}.
\]
Therefore,%
\[
\left \Vert T_{\Omega}f\right \Vert _{L_{p,w}\left(  B\right)  }\lesssim
\left \Vert f\right \Vert _{L_{p,w}\left(  2B\right)  }+\left \Vert w\right \Vert
_{L_{\frac{s}{s-p}}\left(  B\left(  x_{0},r\right)  \right)  }^{\frac{1}{p}%
}\int \limits_{2r}^{\infty}\left \Vert f\right \Vert _{L_{p,w}\left(  B\left(
x_{0},t\right)  \right)  }\left \Vert w\right \Vert _{L_{\frac{s}{s-p}}\left(
B\left(  x_{0},t\right)  \right)  }^{-\frac{1}{p}}\frac{dt}{t}.
\]

On the other hand, we have%
\begin{align*}
\left \Vert f\right \Vert _{L_{p,w}\left(  2B\right)  }  & \approx \left \vert
B\right \vert \left \Vert f\right \Vert _{L_{p,w}\left(  2B\right)  }%
\int \limits_{2r}^{\infty}\frac{dt}{t^{n+1}}\\
& \lesssim \left \vert B\right \vert \int \limits_{2r}^{\infty}\left \Vert
f\right \Vert _{L_{p,w}\left(  B\left(  x_{0},t\right)  \right)  }\frac
{dt}{t^{n+1}}\\
& \lesssim \left \vert B\right \vert ^{\frac{1}{s}}\left \Vert w^{1-p^{\prime}%
}\right \Vert _{L_{1}\left(  B\right)  }^{\frac{1}{p^{\prime}}}\left \Vert
w\right \Vert _{L_{\frac{s}{s-p}}\left(  B\right)  }^{\frac{1}{p}}%
\int \limits_{2r}^{\infty}\left \Vert f\right \Vert _{L_{p,w}\left(  B\left(
x_{0},t\right)  \right)  }\frac{dt}{t^{n+1}}\\
& \lesssim \left \Vert w\right \Vert _{L_{\frac{s}{s-p}}\left(  B\left(
x_{0},r\right)  \right)  }^{\frac{1}{p}}\int \limits_{2r}^{\infty}\left \Vert
f\right \Vert _{L_{p,w}\left(  B\left(  x_{0},t\right)  \right)  }\left \Vert
w\right \Vert _{L_{\frac{s}{s-p}}\left(  B\left(  x_{0},t\right)  \right)
}^{-\frac{1}{p}}\frac{dt}{t}.
\end{align*}

By combining the above inequalities, we obtain%
\[
\left \Vert T_{\Omega}f\right \Vert _{L_{p,w}\left(  B\left(  x_{0},r\right)
\right)  }\lesssim \left \Vert w\right \Vert _{L_{\frac{s}{s-p}}\left(  B\left(
x_{0},r\right)  \right)  }^{\frac{1}{p}}\int \limits_{2r}^{\infty}\left \Vert
f\right \Vert _{L_{p,w}\left(  B\left(  x_{0},t\right)  \right)  }\left \Vert
w\right \Vert _{L_{\frac{s}{s-p}}\left(  B\left(  x_{0},t\right)  \right)
}^{-\frac{1}{p}}\frac{dt}{t}.
\]

Let $p=1<s\leq \infty$. From the weak $\left(  1,1\right)  $ boundedness of
$T_{\Omega}$ and (\ref{12}) it follows that:%

\begin{align}
\left \Vert T_{\Omega}f_{1}\right \Vert _{WL_{1,w}\left(  B\right)  }  &
\leq \left \Vert T_{\Omega}f_{1}\right \Vert _{WL_{1,w}\left(
\mathbb{R}
^{n}\right)  }\lesssim \left \Vert f_{1}\right \Vert _{L_{1,w}\left(
\mathbb{R}
^{n}\right)  }\nonumber \\
& =\left \Vert f\right \Vert _{L_{1,w}\left(  2B\right)  }\lesssim w\left(
B\left(  x_{0},r\right)  \right)  \int \limits_{2r}^{\infty}\left \Vert
f\right \Vert _{L_{1,w}\left(  B\left(  x_{0},t\right)  \right)  }w\left(
B\left(  x_{0},t\right)  \right)  ^{-1}\frac{dt}{t}.\label{315}%
\end{align}

Then from (\ref{e312}) and (\ref{315}) we get the inequality (\ref{e38}),
which completes the proof.
\end{proof}

In the following theorem we get the boundedness of the operator $T_{\Omega}$
on the generalized weighted Morrey spaces $M_{p,\varphi}\left(  w\right)  $.

\begin{theorem}
\label{teo9}$\left(  \text{Our main result}\right)  $ Let $\Omega \in
L_{s}(S^{n-1})$, $s>1$, be homogeneous of degree zero, and $1\leq p<\infty$.
Let $T_{\Omega}$ be a sublinear operator satisfying condition (\ref{e1}),
bounded on $L_{p}(w)$ for $p>1$ and bounded from $L_{1}(w)$ to $WL_{1}(w)$.
Let also, for $s^{\prime}\leq p$, $p\neq1$ and $w\in A_{\frac{p}{s^{\prime}}}%
$, the pair $(\varphi_{1},\varphi_{2})$ satisfies the condition%
\begin{equation}
\int \limits_{r}^{\infty}\frac{\operatorname*{essinf}\limits_{t<\tau<\infty
}\varphi_{1}(x,\tau)w\left(  B\left(  x,\tau \right)  \right)  ^{\frac{1}{p}}%
}{w\left(  B\left(  x,t\right)  \right)  ^{\frac{1}{p}}}\frac{dt}{t}\leq C\,
\varphi_{2}(x,r),\label{316}%
\end{equation}
and for $1<p<s$ and $w^{1-p^{\prime}}\in A_{\frac{p^{\prime}}{s^{\prime}}} $
the pair $(\varphi_{1},\varphi_{2})$ satisfies the condition%
\begin{equation}
\int \limits_{r}^{\infty}\frac{\operatorname*{essinf}\limits_{t<\tau<\infty
}\varphi_{1}(x,\tau)\left \Vert w\right \Vert _{L_{\frac{s}{s-p}}\left(
B\left(  x,r\right)  \right)  }^{\frac{1}{p}}}{\left \Vert w\right \Vert
_{L_{\frac{s}{s-p}}\left(  B\left(  x,t\right)  \right)  }^{\frac{1}{p}}}%
\frac{dt}{t}\leq C\, \varphi_{2}(x,r)\frac{w\left(  B\left(  x,r\right)
\right)  ^{\frac{1}{p}}}{\left \Vert w\right \Vert _{L_{\frac{s}{s-p}}\left(
B\left(  x,r\right)  \right)  }^{\frac{1}{p}}},\label{317}%
\end{equation}
where $C$ does not depend on $x$ and $r$.

Then the operator $T_{\Omega}$ is bounded from $M_{p,\varphi_{1}}\left(
w\right)  $ to $M_{p,\varphi_{2}}\left(  w\right)  $ for $p>1$ and from
$M_{1,\varphi_{1}}\left(  w\right)  $ to $WM_{1,\varphi_{2}}\left(  w\right)
$. Moreover, we have for $p>1$%
\begin{equation}
\left \Vert T_{\Omega}f\right \Vert _{M_{p,\varphi_{2}}\left(  w\right)
}\lesssim \left \Vert f\right \Vert _{M_{p,\varphi_{1}}\left(  w\right)
},\label{3-1}%
\end{equation}
and for $p=1$%
\begin{equation}
\left \Vert T_{\Omega}f\right \Vert _{WM_{1,\varphi_{2}}\left(  w\right)
}\lesssim \left \Vert f\right \Vert _{M_{1,\varphi_{1}}\left(  w\right)
}.\label{3-2}%
\end{equation}

\end{theorem}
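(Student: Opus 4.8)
The plan is to deduce the global $M_{p,\varphi_{1}}(w)\to M_{p,\varphi_{2}}(w)$ boundedness directly from the ball-by-ball estimates already established in the main Lemma \ref{lemma2}. Since that lemma bounds $\Vert T_{\Omega}f\Vert_{L_{p,w}(B(x,r))}$ by a weight factor evaluated at radius $r$ times an integral over $t\in(2r,\infty)$ of $\Vert f\Vert_{L_{p,w}(B(x,t))}$ against an inverse weight factor, it suffices to control that integral by $\Vert f\Vert_{M_{p,\varphi_{1}}(w)}$ times $\varphi_{2}(x,r)$ and the appropriate weight factor. Dividing by $\varphi_{2}(x,r)\,w(B(x,r))^{\frac{1}{p}}$ and taking the supremum over all $x$ and $r$ then recovers exactly the left-hand sides of (\ref{3-1}) and (\ref{3-2}).

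The decisive elementary step is the following. By the definition of the generalized weighted Morrey norm, for every $\tau>0$ one has $\Vert f\Vert_{L_{p,w}(B(x,\tau))}\le \varphi_{1}(x,\tau)\,w(B(x,\tau))^{\frac{1}{p}}\,\Vert f\Vert_{M_{p,\varphi_{1}}(w)}$. Since $t\mapsto \Vert f\Vert_{L_{p,w}(B(x,t))}$ is non-decreasing (the balls grow with $t$), the inequality for each $\tau\ge t$ passes to the essential infimum, giving
\[
\Vert f\Vert_{L_{p,w}(B(x,t))}\le \Vert f\Vert_{M_{p,\varphi_{1}}(w)}\,\operatorname*{essinf}_{t<\tau<\infty}\varphi_{1}(x,\tau)\,w(B(x,\tau))^{\frac{1}{p}},
\]
which is precisely the quantity occurring in the numerators of (\ref{316}) and (\ref{317}).

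For the first range ($p>1$, $s^{\prime}\le p$, $w\in A_{\frac{p}{s^{\prime}}}$) I insert this bound into (\ref{40}), factor out $\Vert f\Vert_{M_{p,\varphi_{1}}(w)}$, and note that the remaining integral over $(2r,\infty)$ is dominated by the same integral over $(r,\infty)$, which by (\ref{316}) is at most $C\varphi_{2}(x,r)$. Hence $\Vert T_{\Omega}f\Vert_{L_{p,w}(B(x,r))}\lesssim w(B(x,r))^{\frac{1}{p}}\varphi_{2}(x,r)\Vert f\Vert_{M_{p,\varphi_{1}}(w)}$, and taking the supremum gives (\ref{3-1}). The range $1<p<s$ with $w^{1-p^{\prime}}\in A_{\frac{p^{\prime}}{s^{\prime}}}$ is handled identically, starting from the second estimate of Lemma \ref{lemma2} with the weight factor $\Vert w\Vert_{L_{\frac{s}{s-p}}}^{\frac{1}{p}}$ in place of $w(B)^{\frac{1}{p}}$ and invoking (\ref{317}) instead of (\ref{316}).

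For $p=1$ the argument is the same but begins from the weak estimate (\ref{e38}): the monotonicity/essinf bound for $\Vert f\Vert_{L_{1,w}(B(x,t))}$ together with (\ref{316}) at $p=1$ yields $\Vert T_{\Omega}f\Vert_{WL_{1,w}(B(x,r))}\lesssim w(B(x,r))\varphi_{2}(x,r)\Vert f\Vert_{M_{1,\varphi_{1}}(w)}$, and dividing by $\varphi_{2}(x,r)\,w(B(x,r))$ and taking the supremum produces (\ref{3-2}). I expect the only genuinely delicate point to be the bookkeeping in the second range: matching the factor $\Vert w\Vert_{L_{\frac{s}{s-p}}(B(x,r))}^{\frac{1}{p}}$ produced by the lemma against the mixed normalization on the right-hand side of (\ref{317}), where both $w(B(x,r))^{\frac{1}{p}}$ and $\Vert w\Vert_{L_{\frac{s}{s-p}}(B(x,r))}^{\frac{1}{p}}$ appear, so that after dividing by $\varphi_{2}(x,r)\,w(B(x,r))^{\frac{1}{p}}$ the weight factors cancel exactly; everything else is the routine essinf argument above.
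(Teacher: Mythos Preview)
Your proposal is correct and follows essentially the same route as the paper's own proof: both arguments pass from the local estimates of Lemma~\ref{lemma2} to the Morrey bound by exploiting the monotonicity of $t\mapsto\Vert f\Vert_{L_{p,w}(B(x_{0},t))}$ to replace this norm by $\Vert f\Vert_{M_{p,\varphi_{1}}(w)}\operatorname*{essinf}_{t<\tau<\infty}\varphi_{1}(x_{0},\tau)w(B(x_{0},\tau))^{1/p}$, and then apply the integral hypothesis \eqref{316} (respectively \eqref{317}). The paper, like you, treats the case $s'\le p$ in full and merely remarks that the case $1<p<s$ is analogous; your explicit flagging of the weight-factor bookkeeping in that second range is in fact more careful than what the paper writes out.
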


\begin{proof}
since $f\in M_{p,\varphi_{1}}\left(  w\right)  $, by (\ref{4}) and the
non-decreasing, with respect to $t$, of the norm $\left \Vert f\right \Vert
_{L_{p,w}\left(  B\left(  x_{0},t\right)  \right)  }$, we get%
\begin{align*}
& \frac{\left \Vert f\right \Vert _{L_{p,w}\left(  B\left(  x_{0},t\right)
\right)  }}{\operatorname*{essinf}\limits_{0<t<\tau<\infty}\varphi_{1}%
(x_{0},\tau)w\left(  B\left(  x_{0},\tau \right)  \right)  ^{\frac{1}{p}}}\\
& \leq \operatorname*{esssup}\limits_{0<t<\tau<\infty}\frac{\left \Vert
f\right \Vert _{L_{p,w}\left(  B\left(  x_{0},t\right)  \right)  }}{\varphi
_{1}(x_{0},\tau)w\left(  B\left(  x_{0},\tau \right)  \right)  ^{\frac{1}{p}}%
}\\
& \leq \operatorname*{esssup}\limits_{0<\tau<\infty}\frac{\left \Vert
f\right \Vert _{L_{p,w}\left(  B\left(  x_{0},\tau \right)  \right)  }}%
{\varphi_{1}(x_{0},\tau)w\left(  B\left(  x_{0},\tau \right)  \right)
^{\frac{1}{p}}}\\
& \leq \left \Vert f\right \Vert _{M_{p,\varphi_{1}}\left(  w\right)  }.
\end{align*}
For $s^{\prime}\leq p<\infty$, since $(\varphi_{1},\varphi_{2})$ satisfies
(\ref{316}), we have%
\begin{align*}
& \int \limits_{r}^{\infty}\left \Vert f\right \Vert _{L_{p,w}\left(  B\left(
x_{0},t\right)  \right)  }w\left(  B\left(  x_{0},t\right)  \right)
^{-\frac{1}{p}}\frac{dt}{t}\\
& \leq \int \limits_{r}^{\infty}\frac{\left \Vert f\right \Vert _{L_{p,w}\left(
B\left(  x_{0},t\right)  \right)  }}{\operatorname*{essinf}\limits_{t<\tau
<\infty}\varphi_{1}(x_{0},\tau)w\left(  B\left(  x_{0},\tau \right)  \right)
^{\frac{1}{p}}}\frac{\operatorname*{essinf}\limits_{t<\tau<\infty}\varphi
_{1}(x_{0},\tau)w\left(  B\left(  x_{0},\tau \right)  \right)  ^{\frac{1}{p}}%
}{w\left(  B\left(  x_{0},t\right)  \right)  ^{\frac{1}{p}}}\frac{dt}{t}\\
& \leq C\left \Vert f\right \Vert _{M_{p,\varphi_{1}}\left(  w\right)  }%
\int \limits_{r}^{\infty}\frac{\operatorname*{essinf}\limits_{t<\tau<\infty
}\varphi_{1}(x_{0},\tau)w\left(  B\left(  x_{0},\tau \right)  \right)
^{\frac{1}{p}}}{w\left(  B\left(  x_{0},t\right)  \right)  ^{\frac{1}{p}}%
}\frac{dt}{t}\\
& \leq C\left \Vert f\right \Vert _{M_{p,\varphi_{1}}\left(  w\right)  }%
\varphi_{2}(x_{0},r).
\end{align*}
Then by (\ref{40}), we get%
\begin{align*}
\left \Vert T_{\Omega}f\right \Vert _{M_{p,\varphi_{2}}\left(  w\right)  }  &
=\sup_{x_{0}\in{\mathbb{R}^{n},}r>0}\varphi_{2}\left(  x_{0},r\right)
^{-1}w\left(  B\left(  x_{0},r\right)  \right)  ^{-\frac{1}{p}}\left \Vert
T_{\Omega}f\right \Vert _{L_{p,w}\left(  B\left(  x_{0},r\right)  \right)  }\\
& \leq C\sup_{x_{0}\in{\mathbb{R}^{n},}r>0}\varphi_{2}\left(  x_{0},r\right)
^{-1}\int \limits_{r}^{\infty}\left \Vert f\right \Vert _{L_{p,w}\left(  B\left(
x_{0},t\right)  \right)  }w\left(  B\left(  x_{0},t\right)  \right)
^{-\frac{1}{p}}\frac{dt}{t}\\
& \leq C\left \Vert f\right \Vert _{M_{p,\varphi_{1}}\left(  w\right)  }.
\end{align*}
For the case of $1\leq p<s$, we can also use the same method, so we omit the
details. This completes the proof of Theorem \ref{teo9}.
\end{proof}

Let $f\in L_{1}^{loc}\left(  {\mathbb{R}^{n}}\right)  $. The rough
Hardy-Littlewood maximal operator $M_{\Omega}$ is defined by%

\[
M_{\Omega}f\left(  x\right)  =\sup_{t>0}\frac{1}{\left \vert B\left(
x,t\right)  \right \vert }\int \limits_{B\left(  x,t\right)  }\left \vert
\Omega \left(  x-y\right)  \right \vert \left \vert f\left(  y\right)
\right \vert dy\text{.}%
\]

Then we can get the following corollary.

\begin{corollary}
Let $1\leq p<\infty$, $\Omega \in L_{s}\left(  S^{n-1}\right)  $, $s>1$, be
homogeneous of degree zero. For $s^{\prime}\leq p$, $p\neq1$ and $w\in
A_{\frac{p}{s^{\prime}}}$, the pair $\left(  \varphi_{1},\varphi_{2}\right)  $
satisfies condition (\ref{316}) and for $1<$ $p<s$ and $w^{1-p^{\prime}}\in
A_{\frac{p^{\prime}}{s^{\prime}}}$ the pair $\left(  \varphi_{1},\varphi
_{2}\right)  $ satisfies condition (\ref{317}). Then the operators $M_{\Omega
}$ and $\overline{T}_{\Omega}$ are bounded from $M_{p,\varphi_{1}}\left(
w\right)  $ to $M_{p,\varphi_{2}}\left(  w\right)  $ for $p>1$ and from
$M_{1,\varphi_{1}}\left(  w\right)  $ to $WM_{1,\varphi_{2}}\left(  w\right)
$.
\end{corollary}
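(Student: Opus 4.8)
The plan is to recognize both $M_{\Omega}$ and $\overline{T}_{\Omega}$ as particular instances of the abstract sublinear operator $T_{\Omega}$ treated in Theorem \ref{teo9}, and then merely to verify that each of them fulfils the two standing hypotheses imposed there on $T_{\Omega}$: the pointwise size estimate (\ref{e1}), together with boundedness on $L_{p}(w)$ for $p>1$ and from $L_{1}(w)$ to $WL_{1}(w)$. Once this verification is complete the conclusion is immediate, since the Zygmund-type conditions (\ref{316}) and (\ref{317}) are assumed in the statement of the corollary exactly as they are required by Theorem \ref{teo9}; hence that theorem applies verbatim to each operator and yields the asserted boundedness from $M_{p,\varphi_{1}}(w)$ to $M_{p,\varphi_{2}}(w)$ for $p>1$ and from $M_{1,\varphi_{1}}(w)$ to $WM_{1,\varphi_{2}}(w)$.

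First I would check the pointwise condition (\ref{e1}). For $\overline{T}_{\Omega}$ this is already recorded in the introduction, where $\overline{T}_{\Omega}$ is shown to satisfy (\ref{e1}). For the rough maximal operator $M_{\Omega}$ it follows directly from the definition: whenever $|x-y|<t$ one has $|B(x,t)|^{-1}=v_{n}^{-1}t^{-n}\le v_{n}^{-1}|x-y|^{-n}$, so that
\[
\frac{1}{|B(x,t)|}\int \limits_{B(x,t)}|\Omega(x-y)|\,|f(y)|\,dy\le v_{n}^{-1}\int \limits_{{\mathbb{R}^{n}}}\frac{|\Omega(x-y)|}{|x-y|^{n}}\,|f(y)|\,dy ,
\]
and taking the supremum over $t>0$ gives (\ref{e1}) with $c_{0}=v_{n}^{-1}$, uniformly in $x$ and $f$.

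Next I would supply the boundedness hypotheses. Under $s^{\prime}\le p$ with $w\in A_{\frac{p}{s^{\prime}}}$, and under $1<p<s$ with $w^{1-p^{\prime}}\in A_{\frac{p^{\prime}}{s^{\prime}}}$, the operator $\overline{T}_{\Omega}$ is bounded on $L_{p}(w)$ by cases $(i)$ and $(ii)$ of Theorem \ref{teo1}. The rough maximal operator $M_{\Omega}$ enjoys the same $L_{p}(w)$ bounds under the identical weight hypotheses, which I would quote from the classical weighted theory of rough operators. For the endpoint $p=1$ I would invoke the weighted weak $(1,1)$ estimate for $M_{\Omega}$ and $\overline{T}_{\Omega}$, which is precisely the ingredient that inequality (\ref{e38}) of Lemma \ref{lemma2} requires. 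With (\ref{e1}) and these mapping properties in hand, both operators satisfy every hypothesis imposed on $T_{\Omega}$ in Theorem \ref{teo9}.

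Finally I would apply Theorem \ref{teo9} twice, once with $T_{\Omega}=M_{\Omega}$ and once with $T_{\Omega}=\overline{T}_{\Omega}$, reading off (\ref{3-1}) and (\ref{3-2}) in each case. I expect the only genuinely substantive step to be the third paragraph: establishing the weighted strong $L_{p}(w)$ bound for the rough maximal operator $M_{\Omega}$ and, more delicately, the weighted weak-type $(1,1)$ endpoint for both operators, since the rough kernel lacks the smoothness available in the classical Calder\'{o}n--Zygmund theory. Everything else is purely formal, because the corollary's hypotheses coincide with the pair conditions (\ref{316}) and (\ref{317}) already demanded by Theorem \ref{teo9}, so no further estimate on $(\varphi_{1},\varphi_{2})$ is needed.
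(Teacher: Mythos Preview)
Your proposal is correct and follows exactly the route the paper intends: the corollary is stated immediately after Theorem~\ref{teo9} with no separate proof, so the paper treats it as the direct specialization $T_{\Omega}=M_{\Omega}$ and $T_{\Omega}=\overline{T}_{\Omega}$, and your write-up simply makes the verification of the hypotheses explicit. Your pointwise check of (\ref{e1}) for $M_{\Omega}$ and your appeal to Theorem~\ref{teo1} for the $L_{p}(w)$ bounds of $\overline{T}_{\Omega}$ are precisely the ingredients the paper has in mind.
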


In the case of $w=1$ from Theorem \ref{teo9}, we get

\begin{corollary}
$\left(  \text{see \cite{BGGS, Gurbuz}}\right)  $ Let $\Omega \in L_{s}%
(S^{n-1})$, $s>1$, be homogeneous of degree zero, and $1\leq p<\infty$. Let
$T_{\Omega}$ be a sublinear operator satisfying condition (\ref{e1}), bounded
on $L_{p}({\mathbb{R}^{n}})$ for $p>1$, and bounded from $L_{1}({\mathbb{R}%
^{n}})$ to $WL_{1}({\mathbb{R}^{n}})$. Let also, for $s^{\prime}\leq p$,
$p\neq1$, the pair $(\varphi_{1},\varphi_{2})$ satisfies the condition%
\[
\int \limits_{r}^{\infty}\frac{\operatorname*{essinf}\limits_{t<\tau<\infty
}\varphi_{1}(x,\tau)\tau^{\frac{n}{p}}}{t^{\frac{n}{p}+1}}dt\leq C\,
\varphi_{2}(x,r),
\]
and for $1<p<s$ the pair $(\varphi_{1},\varphi_{2})$ satisfies the condition%
\[
\int \limits_{r}^{\infty}\frac{\operatorname*{essinf}\limits_{t<\tau<\infty
}\varphi_{1}(x,\tau)\tau^{\frac{n}{p}}}{t^{\frac{n}{p}-\frac{n}{s}+1}}dt\leq
C\, \varphi_{2}(x,r)r^{\frac{n}{s}},
\]
where $C$ does not depend on $x$ and $r$.

Then the operator $T_{\Omega}$ is bounded from $M_{p,\varphi_{1}}$ to
$M_{p,\varphi_{2}}$ for $p>1$ and from $M_{1,\varphi_{1}}$ to $WM_{1,\varphi
_{2}}$. Moreover, we have for $p>1$%
\[
\left \Vert T_{\Omega}f\right \Vert _{M_{p,\varphi_{2}}}\lesssim \left \Vert
f\right \Vert _{M_{p,\varphi_{1}}},
\]

and for $p=1$%
\[
\left \Vert T_{\Omega}f\right \Vert _{WM_{1,\varphi_{2}}}\lesssim \left \Vert
f\right \Vert _{M_{1,\varphi_{1}}}.
\]

\end{corollary}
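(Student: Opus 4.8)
The plan is to read this corollary as the special case $w\equiv1$ of Theorem~\ref{teo9}, so that the whole proof amounts to (a) checking that the constant weight meets the structural hypotheses of that theorem and (b) verifying that the weighted conditions (\ref{316}) and (\ref{317}) collapse to the two displayed inequalities. Point (a) is immediate: $w\equiv1$ belongs to every Muckenhoupt class, so $1\in A_{p/s'}$ in the range $s'\le p$ and $1^{1-p'}=1\in A_{p'/s'}$ in the range $1<p<s$; moreover the hypotheses supply the $L_{p}(\mathbb{R}^{n})$ boundedness for $p>1$ and the weak $(1,1)$ boundedness, which are exactly the $L_{p}(w)$ and $L_{1}(w)\to WL_{1}(w)$ assumptions of Theorem~\ref{teo9} with $w=1$. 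Hence Theorem~\ref{teo9} applies once its two integral conditions are shown to reduce to the ones in the corollary.

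For the range $s'\le p$ I would substitute $w(B(x,\tau))=|B(x,\tau)|=v_{n}\tau^{n}$, whence $w(B(x,\tau))^{1/p}=v_{n}^{1/p}\tau^{n/p}$. In (\ref{316}) the dimensional factor $v_{n}^{1/p}$ cancels between numerator and denominator, the surviving $t^{-n/p}$ merges with $dt/t$ to give $t^{-n/p-1}\,dt$, and the condition becomes
\[
\int \limits_{r}^{\infty}\frac{\operatorname*{essinf}\limits_{t<\tau<\infty}\varphi_{1}(x,\tau)\,\tau^{\frac{n}{p}}}{t^{\frac{n}{p}+1}}\,dt\le C\, \varphi_{2}(x,r),
\]
which (with $n/p=n$ when $p=1$) is precisely the first displayed hypothesis and also covers the endpoint $p=1$.

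For the range $1<p<s$ the only extra ingredient is the norm of the constant weight: $\|w\|_{L_{s/(s-p)}(B(x,t))}^{1/p}=|B(x,t)|^{\frac{s-p}{sp}}=v_{n}^{\frac{s-p}{sp}}\,t^{\frac{n}{p}-\frac{n}{s}}$, using the identity $\tfrac1p\cdot\tfrac{n(s-p)}{s}=\tfrac{n}{p}-\tfrac{n}{s}$. Feeding this and $w(B(x,\tau))^{1/p}=v_{n}^{1/p}\tau^{n/p}$ into (\ref{317}), all powers of $v_{n}$ absorb into the implicit constant (the right-hand ratio $w(B(x,r))^{1/p}\big/\|w\|_{L_{s/(s-p)}(B(x,r))}^{1/p}$ reducing to $v_{n}^{1/s}r^{n/s}$), and the denominator power $t^{-(n/p-n/s)}$ joins $dt/t$; the condition becomes
\[
\int \limits_{r}^{\infty}\frac{\operatorname*{essinf}\limits_{t<\tau<\infty}\varphi_{1}(x,\tau)\,\tau^{\frac{n}{p}}}{t^{\frac{n}{p}-\frac{n}{s}+1}}\,dt\le C\, \varphi_{2}(x,r)\,r^{\frac{n}{s}},
\]
which is the second displayed hypothesis. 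With both conditions verified, Theorem~\ref{teo9} yields at once the boundedness $M_{p,\varphi_{1}}\to M_{p,\varphi_{2}}$ for $p>1$ and $M_{1,\varphi_{1}}\to WM_{1,\varphi_{2}}$ for $p=1$. There is no substantive obstacle here: the entire argument is a specialization plus exponent bookkeeping, and the single point deserving care is the arithmetic identity $\tfrac1p\cdot\tfrac{n(s-p)}{s}=\tfrac{n}{p}-\tfrac{n}{s}$ together with keeping track of how each $dt/t$ absorbs the leftover power of $t$.
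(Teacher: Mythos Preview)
Your proposal is correct and is exactly the paper's approach: the paper does not give a separate proof of this corollary but simply introduces it with the phrase ``In the case of $w=1$ from Theorem~\ref{teo9}, we get,'' and your argument carries out precisely that specialization together with the routine exponent bookkeeping that the paper leaves implicit.
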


In the case of $\varphi_{1}\left(  x,r\right)  =\varphi_{2}\left(  x,r\right)
\equiv w\left(  B\left(  x,r\right)  \right)  ^{^{\frac{\kappa-1}{p}}}$ from
Theorem \ref{teo9} we get the following new result.

\begin{corollary}
Let $1\leq p<\infty$, $\Omega \in L_{s}\left(  S^{n-1}\right)  $, $s>1$, be
homogeneous of degree zero and $0<\kappa<1$. Let also $T_{\Omega}$ be a
sublinear operator satisfying condition (\ref{e1}), bounded on $L_{p}(w)$ for
$p>1$ and bounded from $L_{1}(w)$ to $WL_{1}(w)$. For $s^{\prime}\leq p$,
$p\neq1$ and $w\in A_{\frac{p}{s^{\prime}}}$ or $1<$ $p<s$ and $w^{1-p^{\prime
}}\in A_{\frac{p^{\prime}}{s^{\prime}}}$, the operator $T_{\Omega}$ is bounded
on the weighted Morrey spaces $L_{p,\kappa}(w)$ for $p>1$ and bounded from
$L_{1,\kappa}(w)$ to $WL_{1,\kappa}(w)$.
\end{corollary}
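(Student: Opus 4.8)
The plan is to read the corollary off from Theorem \ref{teo9} by specializing the pair $(\varphi_1,\varphi_2)$. Recall from the Remark following the definition of $M_{p,\varphi}(w)$ that, for $0<\kappa<1$, the choice $\varphi(x,r)\equiv w(B(x,r))^{\frac{\kappa-1}{p}}$ identifies $M_{p,\varphi}(w)$ with the weighted Morrey space $L_{p,\kappa}(w)$ (and likewise $WM_{p,\varphi}(w)$ with $WL_{p,\kappa}(w)$). So I would set $\varphi_1=\varphi_2=\varphi$ and reduce everything to checking that this single $\varphi$ satisfies the two Zygmund-type conditions (\ref{316}) and (\ref{317}) of Theorem \ref{teo9}; once that is done, (\ref{3-1}) and (\ref{3-2}) give precisely the asserted strong boundedness for $p>1$ and weak boundedness for $p=1$, and the two hypotheses on $w$ correspond to the two cases of the theorem.

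First I would dispose of condition (\ref{316}) (the case $s'\le p$, $w\in A_{\frac{p}{s'}}$). Since $\varphi(x,\tau)\,w(B(x,\tau))^{\frac1p}=w(B(x,\tau))^{\frac{\kappa}{p}}$ and $\tau\mapsto w(B(x,\tau))$ is non-decreasing with $\kappa>0$, the inner essential infimum is attained at the left endpoint, so the left-hand side of (\ref{316}) collapses to $\int_r^\infty w(B(x,t))^{\frac{\kappa-1}{p}}\frac{dt}{t}$. Because $w\in A_{\frac{p}{s'}}\subset A_\infty$, Lemma \ref{lemma100}(3) gives the reverse-doubling growth $w(B(x,t))\gtrsim (t/r)^{n\delta}w(B(x,r))$ for $t\ge r$; as $\frac{\kappa-1}{p}<0$ this makes the integrand decay like a negative power of $t$, so the integral converges and is $\lesssim w(B(x,r))^{\frac{\kappa-1}{p}}=\varphi(x,r)$. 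This settles (\ref{316}) for every $\kappa\in(0,1)$, and the same computation with $p=1$ produces the weak-type endpoint via the weak half of Theorem \ref{teo9}.

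The main work is condition (\ref{317}) (the case $1<p<s$, $w^{1-p'}\in A_{\frac{p'}{s'}}$), and this is where I expect the real obstacle. Here $w(B(x,t))^{\frac1p}$ is replaced by $V(x,t):=\|w\|_{L_{\frac{s}{s-p}}(B(x,t))}^{\frac1p}$. Using $w^{1-p'}\in A_{\frac{p'}{s'}}$ together with the identity (\ref{9}) from the Propositions, one obtains the two-sided comparison $V(x,t)\approx |B(x,t)|^{-\frac1s}\,w(B(x,t))^{\frac1p}$, with constants controlled by $[w^{1-p'}]_{A_{\frac{p'}{s'}}}$. The inner essinf again simplifies by monotonicity, and (\ref{317}) is equivalent to the self-improving inequality $\int_r^\infty \frac{w(B(x,t))^{\frac{\kappa}{p}}}{V(x,t)}\frac{dt}{t}\lesssim \frac{w(B(x,r))^{\frac{\kappa}{p}}}{V(x,r)}$. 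Feeding in the comparison turns this into $\int_r^\infty |B(x,t)|^{\frac1s}w(B(x,t))^{\frac{\kappa-1}{p}}\frac{dt}{t}\lesssim |B(x,r)|^{\frac1s}w(B(x,r))^{\frac{\kappa-1}{p}}$. The delicate point is that the factor $|B(x,t)|^{\frac1s}\sim t^{\frac{n}{s}}$ now \emph{grows} in $t$ and competes with the decay coming from $w(B(x,t))^{\frac{\kappa-1}{p}}$; convergence and the self-improvement therefore rely on the reverse-doubling gain of Lemma \ref{lemma100}(3) being strong enough to beat the $t^{n/s}$ growth. This is the step I would spend the most care on, tracking the reverse-doubling exponent explicitly.

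Granting the two verifications, Theorem \ref{teo9} applies with $\varphi_1=\varphi_2\equiv w(B(\cdot,\cdot))^{\frac{\kappa-1}{p}}$ and yields $\|T_\Omega f\|_{M_{p,\varphi}(w)}\lesssim\|f\|_{M_{p,\varphi}(w)}$ for $p>1$ and $\|T_\Omega f\|_{WM_{1,\varphi}(w)}\lesssim\|f\|_{M_{1,\varphi}(w)}$ for $p=1$. Re-reading these through the identification $M_{p,\varphi}(w)=L_{p,\kappa}(w)$ and $WM_{1,\varphi}(w)=WL_{1,\kappa}(w)$ gives exactly the boundedness of $T_\Omega$ on $L_{p,\kappa}(w)$ for $p>1$ and from $L_{1,\kappa}(w)$ to $WL_{1,\kappa}(w)$, completing the proof.
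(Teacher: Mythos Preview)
Your approach is exactly what the paper does: the corollary is stated as an immediate consequence of Theorem~\ref{teo9} under the specialization $\varphi_1(x,r)=\varphi_2(x,r)\equiv w(B(x,r))^{\frac{\kappa-1}{p}}$, and no further argument is given. You actually go beyond the paper by sketching the verification of conditions (\ref{316}) and (\ref{317}) for this choice (including the honest flag on the competing growth in (\ref{317})), whereas the paper leaves that verification entirely implicit.
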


When $\Omega \equiv1$, from Theorem \ref{teo9} we get

\begin{corollary}
\label{corollary 5}Let $1\leq p<\infty$, $w\in A_{p}$ and the pair
$(\varphi_{1},\varphi_{2})$ satisfies condition (\ref{316}). Let also $T$ be a
sublinear operator satisfying condition (\ref{e1}), bounded on $L_{p}(w) $ for
$p>1$ and bounded from $L_{1}(w)$ to $WL_{1}(w)$. Then the operator $T$ is
bounded from $M_{p,\varphi_{1}}\left(  w\right)  $ to $M_{p,\varphi_{2}%
}\left(  w\right)  $ for $p>1$ and from $M_{1,\varphi_{1}}\left(  w\right)  $
to $WM_{1,\varphi_{2}}\left(  w\right)  $.
\end{corollary}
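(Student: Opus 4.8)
The plan is to obtain this statement as an immediate specialization of Theorem \ref{teo9}, exploiting the fact that the constant kernel $\Omega\equiv1$ lies in $L_{s}(S^{n-1})$ for every $s>1$. First I would observe that, since $\Omega\equiv1\in L_{\infty}(S^{n-1})$, one is free to let $s\to\infty$, so that the conjugate exponent satisfies $s^{\prime}=1$. With this choice the requirement $s^{\prime}\leq p$ in branch $(i)$ of Theorem \ref{teo9} reduces to $1\leq p$, which holds throughout, and the Muckenhoupt condition $w\in A_{\frac{p}{s^{\prime}}}$ becomes exactly $w\in A_{p}$, precisely the hypothesis of the corollary.

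Next I would verify that the structural hypotheses on $T$ transfer verbatim. Setting $\Omega\equiv1$ in (\ref{e1}) yields the size estimate $|Tf(x)|\leq c_{0}\int_{{\mathbb{R}^{n}}}|x-y|^{-n}|f(y)|\,dy$, so that $T$ is exactly the operator $T_{\Omega}$ with $\Omega\equiv1$ covered by Theorem \ref{teo9}. The assumed boundedness on $L_{p}(w)$ for $p>1$ and from $L_{1}(w)$ to $WL_{1}(w)$ are identical to the corresponding hypotheses of the theorem, so nothing new needs to be checked there.

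Finally, with $s^{\prime}=1$ the Zygmund-type condition (\ref{316}) carries no dependence on $s$ and coincides with the condition imposed in the corollary; the complementary condition (\ref{317}), which governs the range $1<p<s$, is never invoked, because the choice $s=\infty$ already places every $p\geq1$ inside branch $(i)$. Applying Theorem \ref{teo9} in this branch then delivers the boundedness of $T$ from $M_{p,\varphi_{1}}\left(w\right)$ to $M_{p,\varphi_{2}}\left(w\right)$ for $p>1$ and from $M_{1,\varphi_{1}}\left(w\right)$ to $WM_{1,\varphi_{2}}\left(w\right)$, together with the associated norm inequalities (\ref{3-1}) and (\ref{3-2}). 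Since the argument is a pure substitution of parameters, there is no genuine analytic obstacle; the only point requiring a word of care is legitimizing the passage $s\to\infty$, that is, confirming that $\left\Vert\Omega\right\Vert_{L_{s}(S^{n-1})}$ stays bounded and that the constants furnished by Lemma \ref{lemma2} and Theorem \ref{teo9} remain finite, both of which are immediate for the constant kernel.
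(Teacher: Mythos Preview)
Your proposal is correct and matches the paper's own approach exactly: the paper introduces Corollary \ref{corollary 5} with the single sentence ``When $\Omega\equiv1$, from Theorem \ref{teo9} we get\ldots'' and gives no further argument. Your reduction via $s\to\infty$, $s'=1$, so that $A_{p/s'}=A_p$ and $s'\leq p$ becomes vacuous, is precisely the intended specialization.
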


\begin{remark}
Corollary \ref{corollary 5} has been proved in \cite{Karaman}.
\end{remark}

When $\Omega \equiv1$, In the case of $\varphi_{1}\left(  x,r\right)
=\varphi_{2}\left(  x,r\right)  \equiv w\left(  B\left(  x,r\right)  \right)
^{^{\frac{\kappa-1}{p}}}$ from Theorem \ref{teo9} we get the following new result.

\begin{corollary}
\label{corollary 4}$1\leq p<\infty$, $0<\kappa<1$ and $w\in A_{p}$. Let also
$T$ be a sublinear operator satisfying condition (\ref{e1}), bounded on
$L_{p}(w)$ for $p>1$ and bounded from $L_{1}(w)$ to $WL_{1}(w)$. Then the
operator $T$ is bounded on the weighted Morrey spaces $L_{p,\kappa}(w)$ for
$p>1$ and bounded from $L_{1,\kappa}(w)$ to $WL_{1,\kappa}(w)$.
\end{corollary}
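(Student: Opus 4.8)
The plan is to obtain Corollary \ref{corollary 4} as the specialization of Corollary \ref{corollary 5} (equivalently Theorem \ref{teo9} with $\Omega \equiv 1$, in which case one may take $s=\infty$, so $s'=1$ and the hypothesis $w\in A_{p/s'}$ reduces to $w\in A_p$) to the particular pair $\varphi_1(x,r)=\varphi_2(x,r)\equiv w(B(x,r))^{(\kappa-1)/p}$. By item $(2)$ of the Remark following the definition of the generalized weighted Morrey space, this choice identifies $M_{p,\varphi_1}(w)=M_{p,\varphi_2}(w)=L_{p,\kappa}(w)$, and a direct computation of the norms shows likewise that $WM_{1,\varphi_2}(w)=WL_{1,\kappa}(w)$. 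Hence, once I verify that this pair satisfies the structural hypothesis (\ref{316}) of Corollary \ref{corollary 5}, both boundedness statements follow immediately from that corollary with no further work.

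The entire task therefore reduces to checking (\ref{316}) for $\varphi_1=\varphi_2=w(B(x,r))^{(\kappa-1)/p}$. First I would simplify the essential infimum in the numerator: substituting $\varphi_1(x,\tau)=w(B(x,\tau))^{(\kappa-1)/p}$ gives $\varphi_1(x,\tau)\,w(B(x,\tau))^{1/p}=w(B(x,\tau))^{\kappa/p}$. Since $\tau\mapsto w(B(x,\tau))$ is non-decreasing and $\kappa/p>0$, the quantity $w(B(x,\tau))^{\kappa/p}$ is non-decreasing in $\tau$, so $\operatorname*{essinf}_{t<\tau<\infty} w(B(x,\tau))^{\kappa/p}=w(B(x,t))^{\kappa/p}$. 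After this simplification the integrand in (\ref{316}) collapses to $w(B(x,t))^{\kappa/p}/w(B(x,t))^{1/p}=w(B(x,t))^{(\kappa-1)/p}$, so I am reduced to proving the single scalar inequality $\int_r^\infty w(B(x,t))^{(\kappa-1)/p}\,\frac{dt}{t}\le C\,w(B(x,r))^{(\kappa-1)/p}$.

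The crux of the argument, and the only place where membership $w\in A_p$ is genuinely used, is the convergence of this last integral. Since $(\kappa-1)/p<0$, I need a lower bound on $w(B(x,t))$ that grows as $t\to\infty$, and this is exactly supplied by the right-hand inequality of Lemma \ref{lemma100}$(3)$: applying it with $S=B(x,r)\subset B=B(x,t)$ yields $w(B(x,r))/w(B(x,t))\le C(r/t)^{n\delta}$, that is, $w(B(x,t))\ge C^{-1}(t/r)^{n\delta}w(B(x,r))$ for some $\delta>0$. Raising this to the negative power $(\kappa-1)/p$ produces genuine power decay $w(B(x,t))^{(\kappa-1)/p}\lesssim (r/t)^{\alpha}w(B(x,r))^{(\kappa-1)/p}$ with $\alpha=n\delta(1-\kappa)/p>0$, and inserting this into the integral together with $\int_r^\infty (r/t)^{\alpha}\frac{dt}{t}=1/\alpha$ yields the desired bound. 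The main obstacle is thus not the bookkeeping but recognizing that the reverse-type estimate of Lemma \ref{lemma100}$(3)$ is precisely what turns a divergent-looking integral into a convergent power integral; the monotonicity simplification of the \emph{essinf} is routine.
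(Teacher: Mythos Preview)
Your proposal is correct and follows exactly the route the paper indicates: specialize Theorem~\ref{teo9} (via Corollary~\ref{corollary 5} with $\Omega\equiv1$, $s=\infty$, $s'=1$) to $\varphi_1=\varphi_2=w(B(x,r))^{(\kappa-1)/p}$. The paper merely asserts that this choice works, whereas you have supplied the verification of condition~(\ref{316}) --- the monotonicity of $w(B(x,\tau))^{\kappa/p}$ to collapse the $\operatorname*{essinf}$, and Lemma~\ref{lemma100}(3) to obtain the power decay needed for convergence of $\int_r^\infty w(B(x,t))^{(\kappa-1)/p}\,\frac{dt}{t}$ --- which is precisely the missing detail.
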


\begin{remark}
Note that, from Corollary \ref{corollary 4}, we get Theorem \ref{teo9*}.
\end{remark}

\section{Commutators of sublinear operators with rough kernel generated by
{Calder\'{o}n-Zygmund type operators} on the generalized weighted Morrey
spaces $M_{p,\varphi}\left(  w\right)  $}

In this section we prove the boundedness of the operator $T_{\Omega,b}$
satisfying condition (\ref{e2}) with $b\in BMO\left(  {\mathbb{R}^{n}}\right)
$ on the generalized weighted Morrey spaces $M_{p,\varphi}\left(  w\right)  $
by using the following main Lemma \ref{Lemma 5}.

Let us recall the definition of the space of $BMO({\mathbb{R}^{n}})$.

\begin{definition}
Suppose that $b\in L_{1}^{loc}({\mathbb{R}^{n}})$, let
\[
\Vert b\Vert_{\ast}=\sup_{x\in{\mathbb{R}^{n}},r>0}\frac{1}{|B(x,r)|}%
{\displaystyle \int \limits_{B(x,r)}}
|b(y)-b_{B(x,r)}|dy<\infty,
\]
where
\[
b_{B(x,r)}=\frac{1}{|B(x,r)|}%
{\displaystyle \int \limits_{B(x,r)}}
b(y)dy.
\]

Define
\[
BMO({\mathbb{R}^{n}})=\{b\in L_{1}^{loc}({\mathbb{R}^{n}})~:~\Vert
b\Vert_{\ast}<\infty \}.
\]

If one regards two functions whose difference is a constant as one, then the
space $BMO({\mathbb{R}^{n}})$ is a Banach space with respect to norm
$\Vert \cdot \Vert_{\ast}$.
\end{definition}

An early work about $BMO({\mathbb{R}^{n}})$ space can be attributed to John
and Nirenberg \cite{John-Nirenberg}. For $1<p<\infty$, there is a close
relation between $BMO({\mathbb{R}^{n}})$ and $A_{p}$ weights:%
\[
BMO({\mathbb{R}^{n}})=\left \{  \alpha \log w:w\in A_{p}\text{, }\alpha
\geq0\right \}  .
\]

Let $T$ be a linear operator. For a locally integrable function $b$ on
${\mathbb{R}^{n}}$, we define the commutator $[b,T]$ by
\[
\lbrack b,T]f(x)=b(x)\,Tf(x)-T(bf)(x)
\]
for any suitable function $f$. Since $L_{\infty}({\mathbb{R}^{n}%
})\varsubsetneq BMO({\mathbb{R}^{n}})$, the boundedness of $[b,T]$ is worse
than $T$ (e.g., the singularity; see also \cite{Perez}). Therefore, many
authors want to know whether $[b,T]$ shares the similar boundedness with $T$.
There are a lot of articles that deal with the topic of commutators of
different operators with $BMO$ functions on Lebesgue spaces. The first results
for this commutator have been obtained by Coifman et al. \cite{CRW} in their
study of certain factorization theorems for generalized Hardy spaces. Let
$\overline{T}$ be a C--Z operator. A well known result of Coifman et al.
\cite{CRW} states that when $K\left(  x\right)  =\frac{\Omega \left(
x^{\prime}\right)  }{\left \vert x\right \vert ^{n}}$ and $\Omega$ is smooth,
the commutator $[b,\overline{T}]f=b\, \overline{T}f-\overline{T}(bf)$ is
bounded on $L_{p}({\mathbb{R}^{n}})$, $1<p<\infty$, if and only if $b\in
BMO({\mathbb{R}^{n}})$. The commutators of C--Z operator play an important
role in studying the regularity of solutions of elliptic, parabolic and
ultraparabolic partial differential equations of second order (see, for
example, \cite{ChFraL1, ChFraL2, FazRag2, Poli-Ragu}). The boundedness of the
commutator has been generalized to other contexts and important applications
to some non-linear PDEs have been given by Coifman et al. \cite{CLMS}.

The following lemmas about $BMO({\mathbb{R}^{n}})$ functions will help us to
prove Lemma \ref{Lemma 5} and Theorem \ref{teo15}.

\begin{lemma}
$\left(  \text{see }\left[  \text{\cite{MuckWh}},\text{ Theorem5, page
236}\right]  \right)  $ Let $w\in A_{\infty}$. Then the norm of $BMO(w)$ is
equivalent to the norm of $BMO({\mathbb{R}^{n}})$, where
\[
BMO(w)=\{b~:~\Vert b\Vert_{\ast,w}=\sup_{x\in{\mathbb{R}^{n}},r>0}\frac
{1}{w(B(x,r))}%
{\displaystyle \int \limits_{B(x,r)}}
|b(y)-b_{B(x,r),w}|w(y)dy<\infty \}
\]
and
\[
b_{B(x,r),w}=\frac{1}{w(B(x,r))}%
{\displaystyle \int \limits_{B(x,r)}}
b(y)w(y)dy.
\]

\end{lemma}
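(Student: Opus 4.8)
The plan is to prove the two-sided norm equivalence $\|b\|_{\ast,w}\approx \|b\|_{\ast}$ by treating the two inequalities separately, in each case combining a John--Nirenberg-type exponential decay estimate with the comparison between Lebesgue measure and $w$-measure supplied by Lemma \ref{lemma100}(3). As a preliminary normalization I would first record that for any ball $B$ and any constant $c$ one has $\frac{1}{w(B)}\int_{B}|b(y)-b_{B,w}|w(y)\,dy\le \frac{2}{w(B)}\int_{B}|b(y)-c|w(y)\,dy$, together with its unweighted analogue; this follows since $|c-b_{B,w}|\le \frac{1}{w(B)}\int_{B}|b-c|w$. This reduction lets me interchange the centering constants $b_{B}$ and $b_{B,w}$ at the cost of a harmless factor, so that in each direction I may work with whichever average is more convenient.

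For the inequality $\|b\|_{\ast,w}\lesssim \|b\|_{\ast}$ I would fix a ball $B$ and apply the classical (unweighted) John--Nirenberg inequality to control the level sets $S_\lambda=\{y\in B:\,|b(y)-b_{B}|>\lambda\}$ by $|S_\lambda|\le c_1 e^{-c_2\lambda/\|b\|_{\ast}}\,|B|$. Since $w\in A_\infty$, Lemma \ref{lemma100}(3) yields $\frac{w(S_\lambda)}{w(B)}\le C\big(\frac{|S_\lambda|}{|B|}\big)^{\delta}\le Cc_1^{\delta}e^{-c_2\delta\lambda/\|b\|_{\ast}}$. Integrating this weighted distribution function against $d\lambda$ gives $\frac{1}{w(B)}\int_{B}|b-b_{B}|w\,dy\lesssim \|b\|_{\ast}$; the preliminary reduction then replaces $b_{B}$ by $b_{B,w}$, and taking the supremum over all balls completes this direction.

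For the reverse inequality $\|b\|_{\ast}\lesssim \|b\|_{\ast,w}$ the key ingredient is the \emph{weighted} John--Nirenberg inequality, which is available precisely because $w\in A_\infty$: it gives $w(\{y\in B:\,|b(y)-b_{B,w}|>\lambda\})\le C_1 e^{-C_2\lambda/\|b\|_{\ast,w}}\,w(B)$. I would combine this with the lower comparison $\frac{|S|}{|B|}\le [w]_{A_p}\frac{w(S)}{w(B)}$ from Lemma \ref{lemma100}(3), which is linear and therefore only useful once the exponential bound drops below $1$. Accordingly I would split the $\lambda$-integral of the unweighted distribution function at the threshold $\lambda_0\approx \|b\|_{\ast,w}\log(C_1[w]_{A_p})/C_2$, bounding the distribution function by $1$ below $\lambda_0$ and by $[w]_{A_p}C_1 e^{-C_2\lambda/\|b\|_{\ast,w}}$ above it; both pieces integrate to a constant multiple of $\|b\|_{\ast,w}$, giving $\frac{1}{|B|}\int_{B}|b-b_{B,w}|\,dy\lesssim \|b\|_{\ast,w}$, after which the reduction again swaps $b_{B,w}$ for $b_{B}$.

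I expect the main obstacle to be the weighted John--Nirenberg inequality used in the reverse direction: unlike the first direction, which only needs the easy $A_\infty$ estimate $w(S)\lesssim (|S|/|B|)^{\delta}w(B)$, this direction genuinely exploits the self-improving (reverse H\"{o}lder) structure of $A_\infty$ weights to produce exponential decay in the $w$-measure, and establishing it rigorously is the technical heart of the argument.
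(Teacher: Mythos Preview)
The paper does not actually prove this lemma: it is stated with a citation to Muckenhoupt--Wheeden and left without proof. So there is no ``paper's own proof'' to compare against. That said, the Remark immediately following the lemma does carry out essentially your first direction in the course of establishing (\ref{5.2}): it applies the unweighted John--Nirenberg inequality and then the $A_\infty$ power bound from Lemma~\ref{lemma100}(3) to obtain $w(\{x\in B:|b(x)-b_B|>\beta\})\le Cw(B)e^{-C_2\beta\delta/\|b\|_\ast}$, and integrates in $\beta$. Your argument for $\|b\|_{\ast,w}\lesssim\|b\|_\ast$ is exactly this, so on that half you match the paper's sketch.

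For the reverse inequality the paper offers nothing beyond the citation, and your plan---weighted John--Nirenberg for doubling (hence $A_\infty$) weights, followed by the lower comparison in Lemma~\ref{lemma100}(3) to transfer exponential $w$-decay to Lebesgue-measure decay---is the standard and correct route. One small point: the lower inequality in Lemma~\ref{lemma100}(3) as written in the paper is $\frac{1}{[w]_{A_p}}\frac{|S|}{|B|}\le\frac{w(S)}{w(B)}$, but the usual $A_p$ estimate gives $\bigl(\frac{|S|}{|B|}\bigr)^{p}\le[w]_{A_p}\frac{w(S)}{w(B)}$; either form suffices for your splitting argument since exponential decay absorbs any fixed power, so this does not affect the validity of your proof.
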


\begin{remark}
$(1)~$ The John-Nirenberg inequality : there are constants $C_{1}$, $C_{2}>0$,
such that for all $b\in BMO({\mathbb{R}^{n}})$ and $\beta>0$
\[
\left \vert \left \{  x\in B\,:\,|b(x)-b_{B}|>\beta \right \}  \right \vert \leq
C_{1}|B|e^{-C_{2}\beta/\Vert b\Vert_{\ast}},~~~\forall B\subset{\mathbb{R}%
^{n}}.
\]

$(2)~$ For $1<p<\infty$ the John-Nirenberg inequality implies that
\begin{equation}
\Vert b\Vert_{\ast}\thickapprox \sup_{B}\left(  \frac{1}{|B|}%
{\displaystyle \int \limits_{B}}
|b(y)-b_{B}|^{p}dy\right)  ^{\frac{1}{p}}\label{5.1}%
\end{equation}
and for $1\leq p<\infty$ and $w\in A_{\infty}$
\begin{equation}
\Vert b\Vert_{\ast}\thickapprox \sup_{B}\left(  \frac{1}{w(B)}%
{\displaystyle \int \limits_{B}}
|b(y)-b_{B}|^{p}w(y)dy\right)  ^{\frac{1}{p}}.\label{5.2}%
\end{equation}

Indeed, from the John-Nirenberg inequality and using Lemma \ref{lemma100} (3),
we get
\[
w(\{x\in B\,:\,|b(x)-b_{B}|>\beta \})\leq Cw(B)e^{-C_{2}\beta \delta/\Vert
b\Vert_{\ast}}%
\]
for some $\delta>0$. Hence, this inequality implies that%
\begin{align*}%
{\displaystyle \int \limits_{B}}
|b(y)-b_{B}|^{p}w(y)dy &  =p%
{\displaystyle \int \limits_{0}^{\infty}}
\beta^{p-1}w(\{x\in B\,:\,|b(x)-b_{B}|>\beta \})d\beta \\
&  \leq C\,w(B)%
{\displaystyle \int \limits_{0}^{\infty}}
\, \beta^{p-1}e^{-C_{2}\beta \delta/\Vert b\Vert_{\ast}}\,d\beta \\
&  =Cw(B)\Vert b\Vert_{\ast}^{p}.
\end{align*}

To prove that required equivalence we also need to have the right hand
inequality, which is easily obtained using the H\"{o}lder's inequality, then
we get (\ref{5.2}). Note that (\ref{5.1}) follows from (\ref{5.2}) in the case
of $w\equiv1$.

$(3)~~$ Let $b\in BMO({\mathbb{R}^{n}})$. Then there is a constant $C>0$ such
that
\begin{equation}
\left \vert b_{B(x,r)}-b_{B(x,t)}\right \vert \leq C\Vert b\Vert_{\ast}\ln
\frac{t}{r}~\text{for}~0<2r<t,\label{5.3}%
\end{equation}
where $C$ is independent of $b$, $x$, $r$ and $t$.
\end{remark}

\begin{lemma}
$\left[  \text{\cite{Grafakos}},\text{ Proposition 7.1.2}\right]  $ $\left(
\text{see also }\left[  \text{\cite{MuckWh}, Theorem 5}\right]  \right)  $ Let
$w\in A_{\infty}$ and $1<p<\infty$. Then the following statements are equivalent:

$(1)~\Vert b\Vert_{\ast}\thickapprox \sup \limits_{B}\left(  \frac{1}{|B|}%
{\displaystyle \int \limits_{B}}
|b(y)-b_{B}|^{p}dy\right)  ^{\frac{1}{p}}$,

$(2)~$ $\Vert b\Vert_{\ast}\thickapprox \sup \limits_{B}\inf \limits_{a\in%
\mathbb{R}
}\frac{1}{|B|}%
{\displaystyle \int \limits_{B}}
|b(y)-a|dy$,

$(3)~$ $\Vert b\Vert_{\ast,w}=\sup \limits_{B}\frac{1}{w(B)}%
{\displaystyle \int \limits_{B}}
|b(y)-b_{B,w}|w(y)dy$.
\end{lemma}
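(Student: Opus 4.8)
The plan is to establish the three statements as equivalent by showing that each of the quantities appearing on the right in $(1)$, $(2)$ and $(3)$ is comparable to the ordinary seminorm $\|b\|_{\ast}$; transitivity of $\approx$ then yields the asserted equivalence at once. Throughout I use that $w\in A_{\infty}$ means $w\in A_{p_{0}}$ for some finite $p_{0}$, so that the comparison $w(S)/w(B)\le C(|S|/|B|)^{\delta}$ from Lemma \ref{lemma100}$(3)$ is at my disposal.

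For $(1)$ I would argue two-sidedly. The inequality $\frac{1}{|B|}\int_{B}|b-b_{B}|\,dy\le\bigl(\frac{1}{|B|}\int_{B}|b-b_{B}|^{p}\,dy\bigr)^{1/p}$ is immediate from Jensen's inequality since $p\ge1$, giving $\|b\|_{\ast}\le\sup_{B}\bigl(\frac{1}{|B|}\int_{B}|b-b_{B}|^{p}\,dy\bigr)^{1/p}$. The reverse bound is exactly the unweighted John--Nirenberg computation already recorded as (\ref{5.1}): integrating the level-set estimate $|\{x\in B:|b(x)-b_{B}|>\beta\}|\le C_{1}|B|e^{-C_{2}\beta/\|b\|_{\ast}}$ against $p\beta^{p-1}\,d\beta$ gives $\frac{1}{|B|}\int_{B}|b-b_{B}|^{p}\,dy\le C\|b\|_{\ast}^{p}$ uniformly in $B$.

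For $(2)$ I would proceed by an elementary near-minimizer argument. Taking $a=b_{B}$ shows $\sup_{B}\inf_{a}\frac{1}{|B|}\int_{B}|b-a|\,dy\le\|b\|_{\ast}$. Conversely, for every constant $a$ one has $|b_{B}-a|=\bigl|\frac{1}{|B|}\int_{B}(b-a)\,dy\bigr|\le\frac{1}{|B|}\int_{B}|b-a|\,dy$, so the triangle inequality gives $\frac{1}{|B|}\int_{B}|b-b_{B}|\,dy\le2\,\frac{1}{|B|}\int_{B}|b-a|\,dy$; taking the infimum over $a$ and the supremum over $B$ yields $\|b\|_{\ast}\le2\sup_{B}\inf_{a}\frac{1}{|B|}\int_{B}|b-a|\,dy$.

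The genuine obstacle is statement $(3)$, where the $A_{\infty}$ hypothesis is essential because both the reference mean $b_{B,w}$ and the measure $w\,dy$ are weighted. First I would observe that $b_{B,w}$ is a near-minimizer of $a\mapsto\frac{1}{w(B)}\int_{B}|b-a|\,w\,dy$, since $|b_{B,w}-a|\le\frac{1}{w(B)}\int_{B}|b-a|\,w\,dy$, whence $\frac{1}{w(B)}\int_{B}|b-b_{B,w}|\,w\,dy\le2\,\frac{1}{w(B)}\int_{B}|b-b_{B}|\,w\,dy$; combined with the $p=1$ instance of the weighted John--Nirenberg estimate (\ref{5.2}) — itself obtained from the unweighted level-set bound through the conversion $w(\{x\in B:|b-b_{B}|>\beta\})\le Cw(B)e^{-C_{2}\beta\delta/\|b\|_{\ast}}$ supplied by Lemma \ref{lemma100}$(3)$ — this gives $\|b\|_{\ast,w}\lesssim\|b\|_{\ast}$. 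For the reverse inequality $\|b\|_{\ast}\lesssim\|b\|_{\ast,w}$ I would invoke the preceding Muckenhoupt--Wheeden lemma, which asserts precisely the equivalence of the $BMO(w)$ and $BMO({\mathbb{R}^{n}})$ seminorms for $w\in A_{\infty}$. The step I expect to require the most care is controlling the discrepancy between the weighted mean $b_{B,w}$ and the unweighted mean $b_{B}$, since only for the $L^{1}(w)$ oscillation against the unweighted mean is (\ref{5.2}) directly applicable; this reconciliation is exactly where the doubling and $A_{\infty}$ properties of $w$ enter. Assembling the comparisons for $(1)$, $(2)$ and $(3)$ against the common reference $\|b\|_{\ast}$ then completes the proof.
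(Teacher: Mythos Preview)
The paper does not supply its own proof of this lemma; it is quoted from external sources (Grafakos, Proposition~7.1.2, and Muckenhoupt--Wheeden, Theorem~5) and stated without argument. Your proposal therefore cannot be compared against a proof in the paper, but it does reproduce the standard reasoning one finds in those references: Jensen plus the John--Nirenberg level-set integration for~$(1)$, the elementary near-minimizer trick for~$(2)$, and the weighted John--Nirenberg estimate~(\ref{5.2}) together with the already-stated Muckenhoupt--Wheeden equivalence for~$(3)$.

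One small remark on economy: in your treatment of~$(3)$ you give a direct argument for $\|b\|_{\ast,w}\lesssim\|b\|_{\ast}$ via the near-minimizer bound and~(\ref{5.2}), and then invoke the preceding Muckenhoupt--Wheeden lemma for the reverse inequality. But that lemma already asserts the full two-sided equivalence $\|b\|_{\ast,w}\approx\|b\|_{\ast}$, so once you appeal to it at all, both directions are covered and the separate argument is redundant (though correct). Conversely, if you wish to make the proof self-contained and not cite Muckenhoupt--Wheeden, then the genuine work is the direction $\|b\|_{\ast}\lesssim\|b\|_{\ast,w}$, for which your sketch offers nothing beyond the citation; that direction requires a reverse-H\"older or $A_{\infty}$ argument to pass from weighted oscillations back to unweighted ones, and is not as immediate as the other parts.
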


The following lemma has been proved in \cite{Karaman}.

\begin{lemma}
\label{lemma0}$\left(  \text{see \cite{Karaman}}\right)  $ The following
statements hold.

$i)$ Let $w\in A_{\infty}$ and $b$ be a function in $BMO({\mathbb{R}^{n}})$.
Let also $1\leq p<\infty$, $x\in{\mathbb{R}^{n}}$, and $r_{1},r_{2}>0$. Then
\[
\left(  \frac{1}{w(B(x,r_{1}))}\int \limits_{B(x,r_{1})}|b(y)-b_{B(x,r_{2}%
),w}|^{p}w(y)dy\right)  ^{\frac{1}{p}}\leq C[w]_{A_{\infty}}^{2^{n}}\, \left(
1+\left \vert \ln \frac{r_{1}}{r_{2}}\right \vert \right)  \Vert b\Vert_{\ast},
\]
where $C>0$ is independent of $f$, $w$, $x$, $r_{1}$ and $r_{2}$.

$ii)$ Let $w\in A_{p}$ and $b$ be a function in $BMO({\mathbb{R}^{n}})$. Let
also $1<p<\infty$, $x\in{\mathbb{R}^{n}}$, and $r_{1},r_{2}>0$. Then
\[
\left(  \frac{1}{w^{1-p^{\prime}}(B(x,r_{1}))}\int \limits_{B(x,r_{1}%
)}|b(y)-b_{B(x,r_{2}),w}|^{p^{\prime}}w(y)^{1-p^{\prime}}dy\right)  ^{\frac
{1}{p^{\prime}}}\leq C[w]_{A_{p}}^{\frac{1}{p}}\, \left(  1+\left \vert
\ln \frac{r_{1}}{r_{2}}\right \vert \right)  \Vert b\Vert_{\ast},
\]
where $C>0$ is independent of $f$, $w$, $x$, $r_{1}$ and $r_{2}$.
\end{lemma}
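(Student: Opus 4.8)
The plan is to prove both parts by a single scheme: replace the off-center weighted mean $b_{B(x,r_{2}),w}$ by the natural mean attached to $B(x,r_{1})$, control the resulting oscillation by a weighted John--Nirenberg inequality, and absorb the difference of the two means by a dyadic telescoping argument whose length manufactures the logarithmic factor. For part $(i)$ I would insert $b_{B(x,r_{1}),w}$ and use the triangle inequality in $L_{p}(w)$:
\begin{align*}
&\left(  \frac{1}{w(B(x,r_{1}))}\int_{B(x,r_{1})}|b(y)-b_{B(x,r_{2}),w}|^{p}w(y)\,dy\right)  ^{\frac{1}{p}}\\
&\quad\leq \left(  \frac{1}{w(B(x,r_{1}))}\int_{B(x,r_{1})}|b(y)-b_{B(x,r_{1}),w}|^{p}w(y)\,dy\right)  ^{\frac{1}{p}}+|b_{B(x,r_{1}),w}-b_{B(x,r_{2}),w}|.
\end{align*}
Since $w\in A_{\infty}$, the first term is $\lesssim \Vert b\Vert_{\ast}$ straight from the weighted John--Nirenberg equivalence (\ref{5.2}) (equivalently from the distributional bound recorded in Remark (2)), with a constant governed by $[w]_{A_{\infty}}$.

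The heart of the matter is the constant difference. Assuming $r_{1}<r_{2}$ (the reverse case is symmetric and the case $r_{1}\approx r_{2}$ is absorbed into the ``$+1$''), I would pick $N\approx \log_{2}(r_{2}/r_{1})$ and telescope along the dyadic chain $B(x,r_{1})\subset B(x,2r_{1})\subset\cdots\subset B(x,2^{N}r_{1})\supset B(x,r_{2})$. A single step is handled, using $B(x,2^{k}r_{1})\subset B(x,2^{k+1}r_{1})$, by
\[
|b_{B(x,2^{k}r_{1}),w}-b_{B(x,2^{k+1}r_{1}),w}|\leq \frac{w(B(x,2^{k+1}r_{1}))}{w(B(x,2^{k}r_{1}))}\,\frac{1}{w(B(x,2^{k+1}r_{1}))}\int_{B(x,2^{k+1}r_{1})}|b(y)-b_{B(x,2^{k+1}r_{1}),w}|\,w(y)\,dy,
\]
where the weight ratio is $\leq 2^{2^{n}}[w]_{A_{\infty}}^{2^{n}}$ by Lemma \ref{lemma100}$(2)$ with $\lambda=2$, and the normalized integral is $\leq \Vert b\Vert_{\ast,w}\approx \Vert b\Vert_{\ast}$. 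Crucially one sums, rather than multiplies, the $N\approx|\ln(r_{1}/r_{2})|$ steps (plus one comparable residual step relating $B(x,2^{N}r_{1})$ to $B(x,r_{2})$), so the characteristic stays to the first power:
\[
|b_{B(x,r_{1}),w}-b_{B(x,r_{2}),w}|\lesssim [w]_{A_{\infty}}^{2^{n}}\Big(1+\big|\ln\tfrac{r_{1}}{r_{2}}\big|\Big)\Vert b\Vert_{\ast}.
\]
This is the weighted analogue of (\ref{5.3}); combined with the first term it yields $(i)$.

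For part $(ii)$ I would run the identical scheme with the dual weight $\sigma:=w^{1-p^{\prime}}$ in place of $w$ and exponent $p^{\prime}$ in place of $p$. By Lemma \ref{lemma100}$(4)$, $\sigma\in A_{p^{\prime}}\subset A_{\infty}$ with $[\sigma]_{A_{p^{\prime}}}=[w]_{A_{p}}^{1/(p-1)}$, so both the John--Nirenberg step (centered now at $b_{B(x,r_{1}),\sigma}$) and the telescoping step apply verbatim to $\sigma$. The only new feature is that the prescribed center $b_{B(x,r_{2}),w}$ is taken against $w$, not $\sigma$; I would therefore split
\[
|b_{B(x,r_{1}),\sigma}-b_{B(x,r_{2}),w}|\leq |b_{B(x,r_{1}),\sigma}-b_{B(x,r_{1}),w}|+|b_{B(x,r_{1}),w}-b_{B(x,r_{2}),w}|.
\]
The second summand is exactly the quantity bounded in $(i)$. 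For the first (same ball, two weights) I would pass through the unweighted mean $b_{B(x,r_{1})}$ and apply, for each $\mu\in\{\sigma,w\}$, the distributional estimate $\mu(\{y\in B:|b(y)-b_{B}|>\beta\})\leq C\mu(B)e^{-c\beta/\Vert b\Vert_{\ast}}$ of Remark (2), which integrates to $|b_{B,\mu}-b_{B}|\lesssim \Vert b\Vert_{\ast}$. Collecting the three contributions and tracking the constants through $[\sigma]_{A_{p^{\prime}}}=[w]_{A_{p}}^{1/(p-1)}$ produces the stated factor $[w]_{A_{p}}^{1/p}$.

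The routine parts are the triangle inequalities and the two John--Nirenberg applications; the main obstacle is the bookkeeping of the exact power of the weight characteristic. The logarithmic factor is forced purely by the number of dyadic scales, whereas the powers $[w]_{A_{\infty}}^{2^{n}}$ and $[w]_{A_{p}}^{1/p}$ must be extracted carefully from the single-step doubling ratio $w(2B)/w(B)\leq 2^{2^{n}}[w]_{A_{\infty}}^{2^{n}}$ and from the $A_{p^{\prime}}$-characteristic of the dual weight, respectively. Verifying that these powers do not compound when summed over the $\approx|\ln(r_{1}/r_{2})|$ scales, and that the John--Nirenberg constants are dominated by the same powers, is the delicate point of the argument.
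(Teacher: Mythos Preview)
The paper does not actually prove this lemma; it is only cited from \cite{Karaman} (``The following lemma has been proved in \cite{Karaman}''), so there is no in-paper argument to compare against. Your scheme---insert the $r_{1}$-centered mean, invoke the weighted John--Nirenberg equivalence (\ref{5.2}) for the oscillation term, and telescope dyadically across $\approx|\ln(r_{1}/r_{2})|$ scales using the $A_{\infty}$ doubling bound of Lemma~\ref{lemma100}(2)---is exactly the standard route, and it is correct for part~$(i)$: each step contributes a factor $w(2B)/w(B)\le 2^{2^{n}}[w]_{A_{\infty}}^{2^{n}}$ times $\Vert b\Vert_{\ast,w}\approx\Vert b\Vert_{\ast}$, and summing (not multiplying) over the scales yields the logarithm with the weight characteristic to a fixed power.

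For part~$(ii)$ your reduction to the dual weight $\sigma=w^{1-p'}\in A_{p'}$ and the additional bridge $|b_{B,\sigma}-b_{B,w}|\lesssim\Vert b\Vert_{\ast}$ via the unweighted mean are fine structurally, but the final sentence---that tracking constants through $[\sigma]_{A_{p'}}=[w]_{A_{p}}^{1/(p-1)}$ ``produces the stated factor $[w]_{A_{p}}^{1/p}$''---is not justified by what you wrote. Running your part~$(i)$ argument with $\sigma$ and Lemma~\ref{lemma100}(2) gives a constant of size $[\sigma]_{A_{\infty}}^{2^{n}}\le[w]_{A_{p}}^{2^{n}/(p-1)}$, not $[w]_{A_{p}}^{1/p}$; if instead you use Lemma~\ref{lemma100}(1) for the doubling ratio you get $[\sigma]_{A_{p'}}=[w]_{A_{p}}^{1/(p-1)}$ per step, again not $[w]_{A_{p}}^{1/p}$. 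Since the paper gives no proof and the precise exponent plays no role in the downstream applications (Lemma~\ref{Lemma 5} and Theorem~\ref{teo15} only use the qualitative bound $C(w)(1+|\ln(r_{1}/r_{2})|)\Vert b\Vert_{\ast}$), this discrepancy is harmless for the paper's purposes; but if you want to claim the specific power $[w]_{A_{p}}^{1/p}$ you would need to supply a sharper step than a straight transplant of part~$(i)$ to $\sigma$.
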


From Lemma \ref{lemma0} we get the following corollary, which has ben proved
in \cite{LinLu} for $w\equiv1$.

\begin{corollary}
Let $b$ be a function in $BMO({\mathbb{R}^{n}})$. Let also $1\leq p<\infty$,
$x\in{\mathbb{R}^{n}}$, and $r_{1},r_{2}>0$. Then
\[
\left(  \frac{1}{|B(x,r_{1})|}%
{\displaystyle \int \limits_{B(x,r_{1})}}
|b(y)-b_{B(x,r_{2})}|^{p}dy\right)  ^{\frac{1}{p}}\leq C\left(  1+\left \vert
\ln \frac{r_{1}}{r_{2}}\right \vert \right)  \Vert b\Vert_{\ast},
\]
where $C>0$ is independent of $f$, $x$, $r_{1}$ and $r_{2}$.
\end{corollary}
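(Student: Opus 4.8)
The plan is to obtain this inequality as the unweighted specialization of Lemma \ref{lemma0}(i), taking the weight $w\equiv1$. First I would observe that the constant weight $w\equiv1$ belongs to $A_{\infty}$; indeed it lies in $A_{p}$ for every $1\le p<\infty$ with $[w]_{A_{p}}=1$, so that $[w]_{A_{\infty}}=1$. With this choice the three weighted quantities appearing in Lemma \ref{lemma0}(i) collapse to their classical counterparts: the weighted measure satisfies $w(B(x,r_{1}))=|B(x,r_{1})|$, the weighted average reduces to the ordinary mean $b_{B(x,r_{2}),w}=b_{B(x,r_{2})}$, and the weighted integral $\int_{B(x,r_{1})}|b(y)-b_{B(x,r_{2})}|^{p}w(y)\,dy$ becomes the ordinary Lebesgue integral.

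Then I would substitute these identifications into the conclusion of Lemma \ref{lemma0}(i). Since $[w]_{A_{\infty}}^{2^{n}}=1^{2^{n}}=1$, the right-hand side becomes $C\left(1+\left|\ln\frac{r_{1}}{r_{2}}\right|\right)\Vert b\Vert_{\ast}$, which is exactly the bound claimed in the statement, while the left-hand side is precisely the unweighted $L_{p}$-oscillation of $b$ over $B(x,r_{1})$ taken about the mean of $b$ over $B(x,r_{2})$. This yields the asserted estimate for the whole range $1\le p<\infty$.

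There is essentially no obstacle here, since the result is a direct corollary rather than an independent argument; the only point meriting attention is that the reduction to $w\equiv1$ is legitimate at the endpoint $p=1$ as well. This causes no difficulty, because Lemma \ref{lemma0}(i) is already stated for the full range $1\le p<\infty$, so no separate endpoint analysis is required. Agreement with the known unweighted estimate of \cite{LinLu} then serves as a consistency check on the computation.
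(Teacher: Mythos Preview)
Your proposal is correct and matches the paper's approach exactly: the paper states this result as an immediate corollary of Lemma~\ref{lemma0} obtained by taking $w\equiv1$, and your reduction carries out precisely that specialization. No separate argument is given in the paper, so your derivation is in fact more detailed than the original.
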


\begin{theorem}
\label{teo2}$\left(  \text{see \cite{LuDingY}}\right)  $ Let $\Omega \in
L_{s}(S^{n-1})$, $s>1$, be homogeneous of degree zero, and $1<p<\infty$. If
$b\in BMO\left(  {\mathbb{R}^{n}}\right)  $ and $p$, $q$, $w$ satisfy one of
the following conditions, then $[b,\overline{T}_{\Omega}]$ is bounded on
$L_{p}(w)$:

$\left(  i\right)  $ $s^{\prime}\leq p<\infty$, $p\neq1$ and $w\in A_{\frac
{p}{s^{\prime}}}$;

$\left(  ii\right)  $ $1<p\leq s$, $p\neq \infty$ and $w^{1-p^{\prime}}\in
A_{\frac{p^{\prime}}{s^{\prime}}}$;

$\left(  iii\right)  $ $1<p<\infty$, and $w^{s^{\prime}}\in A_{p}$.
\end{theorem}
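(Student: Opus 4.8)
The plan is to derive the weighted $L_{p}$ bound for the commutator from the weighted boundedness of $\overline{T}_\Omega$ itself (Theorem \ref{teo1}) together with the $BMO$ machinery assembled above, using the Fefferman--Stein sharp maximal function as the transfer mechanism. Write $M^{\#}g$ for the sharp maximal function, $M^{\#}_{\delta}g=\big(M^{\#}(|g|^{\delta})\big)^{1/\delta}$ for $0<\delta<1$, and $M_{r}g=\big(M(|g|^{r})\big)^{1/r}$; let also $M_{\Omega,r}f=\big(M_{\Omega}(|f|^{r})\big)^{1/r}$ be the rough maximal operator built from $M_{\Omega}$. The engine is the inequality $\|g\|_{L_{p}(w)}\le C\,\|M^{\#}g\|_{L_{p}(w)}$, valid for $w\in A_{\infty}$ and $0<p<\infty$ provided $g$ is a priori known to lie in a suitable space; under each of (i)--(iii) the weight $w$ lies in some $A_{q}$, hence in $A_{\infty}$, and Theorem \ref{teo1} already places $\overline{T}_\Omega f$ in $L_{p}(w)$, so this a priori requirement can be arranged by the usual truncation and density argument.

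First I would pass to the mean-subtracted commutator. Fixing a ball $B=B(x_{0},r)$ and using that the constant $b_{B}$ commutes with $\overline{T}_\Omega$, I would write $[b,\overline{T}_\Omega]f=(b-b_{B})\,\overline{T}_\Omega f-\overline{T}_\Omega\big((b-b_{B})f\big)$ and then decompose $f=f_{1}+f_{2}$ with $f_{1}=f\chi_{2B}$ and $f_{2}=f\chi_{(2B)^{C}}$ as in (\ref{e39}). The goal is a pointwise estimate of the form
\[
M^{\#}_{\delta}\big([b,\overline{T}_\Omega]f\big)(x)\lesssim \|b\|_{\ast}\Big(M_{r}(\overline{T}_\Omega f)(x)+M_{\Omega,r}f(x)\Big),
\]
for a fixed $0<\delta<1$ and an exponent $r>1$ chosen compatibly with $s$ so that Hölder's inequality against $\Omega\in L_{s}(S^{n-1})$ is available. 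The first term $(b-b_{B})\overline{T}_\Omega f$ is controlled by the generalized Hölder inequality, peeling off $|b-b_{B}|$ through (\ref{5.1}) and leaving $\overline{T}_\Omega f$, which produces the $M_{r}(\overline{T}_\Omega f)$ contribution; the local piece $\overline{T}_\Omega\big((b-b_{B})f_{1}\big)$ is dispatched by the $L_{r}$ boundedness of $\overline{T}_\Omega$ together with the John--Nirenberg estimate, yielding a $\|b\|_{\ast}M_{\Omega,r}f$ contribution.

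The hard part will be the far piece $\overline{T}_\Omega\big((b-b_{B})f_{2}\big)$. For $x\in B$ and $y\in(2B)^{C}$ one has $|x-y|\approx|x_{0}-y|$, so I would compare this quantity with its value at $x_{0}$ and expand the difference over the dyadic annuli $2^{k+1}B\setminus 2^{k}B$. On each annulus, Hölder's inequality together with the size control (\ref{10}) replaces $\Omega$ by its $L_{s}(S^{n-1})$ norm, while the factor $|b(y)-b_{B}|$ is absorbed using the logarithmic growth of the averages $b_{2^{k}B}-b_{B}$ from (\ref{5.3}) and the weighted oscillation bounds of Lemma \ref{lemma0}. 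The subtlety is that each annulus contributes a logarithmic factor of order $k$, which must be summed against the kernel decay $2^{-kn}$; this series converges and returns the bound by $\|b\|_{\ast}M_{\Omega,r}f(x_{0})$. Getting the exponents in this Hölder step to close simultaneously with the three weight hypotheses (i)--(iii), i.e.\ choosing $r$ so that $s'\le r<p$ or the analogous relation, is where I expect the real bookkeeping to lie.

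With the pointwise sharp maximal bound in hand, the argument closes by Fefferman--Stein: since $w\in A_{\infty}$ in all three cases,
\[
\big\|[b,\overline{T}_\Omega]f\big\|_{L_{p}(w)}\lesssim \big\|M^{\#}_{\delta}\big([b,\overline{T}_\Omega]f\big)\big\|_{L_{p}(w)}\lesssim \|b\|_{\ast}\Big(\big\|M_{r}(\overline{T}_\Omega f)\big\|_{L_{p}(w)}+\big\|M_{\Omega,r}f\big\|_{L_{p}(w)}\Big).
\]
The first summand is handled by the $L_{p}(w)$ boundedness of $M_{r}$ combined with Theorem \ref{teo1}, and the second by the $L_{p}(w)$ boundedness of $M_{\Omega,r}$, both of which hold exactly in the ranges (i)--(iii); this gives $\|[b,\overline{T}_\Omega]f\|_{L_{p}(w)}\lesssim \|b\|_{\ast}\|f\|_{L_{p}(w)}$. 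As an alternative I would keep in reserve the Coifman--Rochberg--Weiss conjugation trick: writing $T_{z}f=e^{zb}\overline{T}_\Omega(e^{-zb}f)$ gives $[b,\overline{T}_\Omega]f=\frac{1}{2\pi i}\oint_{|z|=\varepsilon}z^{-2}T_{z}f\,dz$, and since $\|T_{z}f\|_{L_{p}(w)}=\|\overline{T}_\Omega(e^{-zb}f)\|_{L_{p}(w e^{p\,\mathrm{Re}(z)b})}$ with $\|e^{-zb}f\|_{L_{p}(we^{p\,\mathrm{Re}(z)b})}=\|f\|_{L_{p}(w)}$, Cauchy's estimate reduces the whole theorem to the uniform membership of $w\,e^{p\eta b}$ in the relevant Muckenhoupt class for small real $\eta$, which follows from the openness of those classes and the identity $BMO=\{\alpha\log w:w\in A_{p}\}$ recorded above.
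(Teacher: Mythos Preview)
The paper does not supply its own proof of Theorem~\ref{teo2}; the result is quoted from \cite{LuDingY} and used as a black box in Lemma~\ref{Lemma 5}. So there is no in-paper argument to compare against, only the question of whether your sketch is sound.

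Your backup route---the Coifman--Rochberg--Weiss conjugation---is the right one and is in fact the method used in the source you are reproducing. The openness of $A_{p/s'}$ (and of the dual and power-weight classes in (ii), (iii)) together with $e^{\eta b}\in A_{q}$ for small $|\eta|$ gives uniform control of the perturbed weights, and Theorem~\ref{teo1} then feeds directly into the Cauchy integral formula. That argument closes with no further input.

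Your primary route via $M^{\#}_{\delta}$ has a gap at exactly the step you flag as ``the hard part.'' For the far piece you write that you would compare $\overline{T}_{\Omega}\big((b-b_{B})f_{2}\big)(x)$ with its value at $x_{0}$ and sum over annuli using only the size estimate (\ref{10}). But the difference $\overline{T}_{\Omega}g(x)-\overline{T}_{\Omega}g(x_{0})$ involves the kernel increment $\frac{\Omega(x-y)}{|x-y|^{n}}-\frac{\Omega(x_{0}-y)}{|x_{0}-y|^{n}}$, and for $\Omega$ merely in $L_{s}(S^{n-1})$ with no Dini or H\"ormander-type regularity there is no usable bound on this increment; size alone gives two separate terms of the same order and no gain in $k$, so the dyadic sum $\sum_{k}k\cdot 2^{0}$ diverges. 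The sharp-function machinery for commutators of genuinely rough singular integrals requires either an $L^{s}$-H\"ormander condition on $\Omega$ (which you have not assumed) or a different organization of the estimate---for instance bounding the oscillation of the far piece through the maximal truncated operator $\overline{T}_{\Omega}^{\ast}$ and a Cotlar-type inequality, or passing through an intermediate good-$\lambda$ argument. As written, the annulus computation does not converge. Since your conjugation alternative is both correct and shorter, I would promote it to the main argument and drop the sharp-maximal sketch unless you intend to supply the missing regularity mechanism.
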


As in the proof of Theorem \ref{teo9}, it suffices to prove the following main
Lemma \ref{Lemma 5}.

\begin{lemma}
\label{Lemma 5}$\left(  \text{Our main Lemma}\right)  $ Let $\Omega \in
L_{s}(S^{n-1})$, $s>1$, be homogeneous of degree zero. Let $1<p<\infty$, $b\in
BMO\left(  {\mathbb{R}^{n}}\right)  $, and $T_{\Omega,b}$ is a sublinear
operator satisfying condition (\ref{e2}), bounded on $L_{p}(w)$. Then, for
$s^{\prime}\leq p$ and $w\in A_{\frac{p}{s^{\prime}}}$ the inequality
\begin{equation}
\Vert T_{\Omega,b}f\Vert_{L_{p,w}(B(x_{0},r))}\lesssim \Vert b\Vert_{\ast
}\,w\left(  B\left(  x_{0},r\right)  \right)  ^{\frac{1}{p}}\int
\limits_{2r}^{\infty}\left(  1+\ln \frac{t}{r}\right)  \Vert f\Vert
_{L_{p,w}(B(x_{0},t))}w\left(  B\left(  x_{0},t\right)  \right)  ^{-\frac
{1}{p}}\frac{dt}{t}\label{40*}%
\end{equation}
holds for any ball $B(x_{0},r)$ and for all $f\in L_{p,w}^{loc}({\mathbb{R}%
^{n}})$.

Also, for $p<s$ and $w^{1-p^{\prime}}\in$ $A_{\frac{p^{\prime}}{s^{\prime}}}$
the inequality
\[
\Vert T_{\Omega,b}f\Vert_{L_{p,w}(B(x_{0},r))}\lesssim \Vert b\Vert_{\ast}\,
\left \Vert w\right \Vert _{L_{\frac{s}{s-p}}\left(  B\left(  x_{0},r\right)
\right)  }^{\frac{1}{p}}\int \limits_{2r}^{\infty}\left(  1+\ln \frac{t}%
{r}\right)  \Vert f\Vert_{L_{p,w}(B(x_{0},t))}\left \Vert w\right \Vert
_{L_{\frac{s}{s-p}}\left(  B\left(  x_{0},t\right)  \right)  }^{-\frac{1}{p}%
}\frac{dt}{t}%
\]
holds for any ball $B(x_{0},r)$ and for all $f\in L_{p,w}^{loc}({\mathbb{R}%
^{n}})$.
\end{lemma}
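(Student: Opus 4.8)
The plan is to run the scheme of Lemma \ref{lemma2} while carrying along the extra factor $|b(x)-b(y)|$ supplied by (\ref{e2}). Fix $B=B(x_0,r)$, put $2B=B(x_0,2r)$, and decompose $f=f_1+f_2$ with $f_1=f\chi_{2B}$ and $f_2=f\chi_{(2B)^{C}}$ as in (\ref{e39}), so that
\[
\Vert T_{\Omega,b}f\Vert_{L_{p,w}(B)}\leq\Vert T_{\Omega,b}f_1\Vert_{L_{p,w}(B)}+\Vert T_{\Omega,b}f_2\Vert_{L_{p,w}(B)}.
\]
For the local term I would invoke the $L_{p}(w)$-boundedness of $T_{\Omega,b}$ from Theorem \ref{teo2}, whose operator norm is controlled by $\Vert b\Vert_{\ast}$, to get $\Vert T_{\Omega,b}f_1\Vert_{L_{p,w}(B)}\lesssim\Vert b\Vert_{\ast}\Vert f\Vert_{L_{p,w}(2B)}$, and then dominate $\Vert f\Vert_{L_{p,w}(2B)}$ by the desired tail integral exactly through the chain (\ref{12}); since $1\leq1+\ln\frac{t}{r}$ for $t\geq2r$, this contribution already has the required shape.

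The substance lies in the tail $f_2$. Writing $b_{B}:=b_{B(x_0,r)}$ for the mean and splitting $|b(x)-b(y)|\leq|b(x)-b_{B}|+|b(y)-b_{B}|$, together with the elementary bound $\tfrac{1}{2}|x_0-y|\leq|x-y|$ valid for $x\in B$, $y\in(2B)^{C}$, I would break the pointwise estimate of $|T_{\Omega,b}f_2(x)|$ into
\[
J_1(x)=|b(x)-b_{B}|\int_{(2B)^{C}}\frac{|\Omega(x-y)|}{|x_0-y|^{n}}|f(y)|\,dy,\qquad J_2(x)=\int_{(2B)^{C}}|b(y)-b_{B}|\frac{|\Omega(x-y)|}{|x_0-y|^{n}}|f(y)|\,dy.
\]
For $J_1$ the inner integral is exactly the one estimated in (\ref{100})--(\ref{11}), which yields an $x$-independent bound; taking the $L_{p,w}(B)$-norm then leaves the factor $\bigl(\int_{B}|b(x)-b_{B}|^{p}w(x)\,dx\bigr)^{1/p}\lesssim\Vert b\Vert_{\ast}w(B)^{1/p}$ by the weighted John--Nirenberg equivalence (\ref{5.2}), which again fits (\ref{40*}) after inserting $1\leq1+\ln\frac{t}{r}$.

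For $J_2$ the logarithm genuinely arises. Applying Fubini as in (\ref{100}) and then H\"older in the $y$-variable to peel off $\Omega$ in $L_s$ via (\ref{10}), the matter reduces to estimating $\Vert(b-b_{B})f\Vert_{L_{s'}(B(x_0,t))}$ for $t\geq2r$. I would split $|b(y)-b_{B}|\leq|b(y)-b_{B(x_0,t)}|+|b_{B(x_0,t)}-b_{B}|$, where the mean difference contributes $\lesssim\Vert b\Vert_{\ast}\bigl(1+\ln\frac{t}{r}\bigr)$ by (\ref{5.3}) --- exactly the weight in (\ref{40*}) --- while the first summand is treated by a further H\"older step with exponents $\frac{p}{s'}$ and $\bigl(\frac{p}{s'}\bigr)'$ that separates $\Vert f\Vert_{L_{p,w}(B(x_0,t))}$ from a $BMO$ integral of $|b-b_{B(x_0,t)}|$ weighted by $w^{1-(p/s')'}\in A_{\infty}$ (Lemma \ref{lemma100}(4)), bounded by $\Vert b\Vert_{\ast}$ via the weighted John--Nirenberg equivalence (\ref{5.2}) over the common ball and hence carrying no further logarithm (this packaging is precisely Lemma \ref{lemma0}). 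Collecting the surviving weight factors by (\ref{3}) as in (\ref{11}) makes the powers of $|B(x_0,t)|$ telescope to $w(B(x_0,t))^{-1/p}\frac{dt}{t}$, which gives (\ref{40*}).

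For the second assertion ($p<s$, $w^{1-p'}\in A_{p'/s'}$) I would repeat the same decomposition and the same $BMO$ splitting, but replace the H\"older step by Minkowski's inequality together with Lemma \ref{lemma10}, (\ref{314}), (\ref{7}) and (\ref{9}), exactly as in the corresponding part of Lemma \ref{lemma2}, so that the weight factors telescope instead to $\Vert w\Vert_{L_{s/(s-p)}(B(x_0,t))}^{-1/p}\frac{dt}{t}$. The main obstacle I anticipate is purely bookkeeping: keeping track of the several nested H\"older exponents and weighted $BMO$ estimates so that every power of $|B(x_0,t)|$ cancels and only $w(B(x_0,t))^{-1/p}$ (respectively $\Vert w\Vert_{L_{s/(s-p)}(B(x_0,t))}^{-1/p}$) and the factor $1+\ln\frac{t}{r}$ survive.
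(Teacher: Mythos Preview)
Your proposal is correct and follows essentially the same scheme as the paper's proof: decompose $f=f_1+f_2$, use the $L_p(w)$-boundedness of $T_{\Omega,b}$ together with (\ref{12}) for $f_1$, and for $f_2$ split $|b(x)-b(y)|$ through a constant, then apply Fubini, H\"older, (\ref{10}), (\ref{3}) and the weighted John--Nirenberg estimates. The only cosmetic differences are that you center the split at the \emph{unweighted} mean $b_{B}$ (the paper uses the weighted mean $b_{B,w}$), your $J_1$ and $J_2$ are labeled in the opposite order, and you spell out the decomposition $|b(y)-b_{B}|\le |b(y)-b_{B(x_0,t)}|+|b_{B(x_0,t)}-b_{B}|$ together with (\ref{5.2}) and (\ref{5.3}) explicitly, whereas the paper packages this step into a single appeal to Lemma~\ref{lemma0}.
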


\begin{proof}
Let $1<p<\infty$ and $b\in BMO\left(  {\mathbb{R}^{n}}\right)  $. As in the
proof of Lemma \ref{lemma2}, we represent function $f$ in form (\ref{e39}) and
have%
\[
\left \Vert T_{\Omega,b}f\right \Vert _{L_{p,w}\left(  B\right)  }\leq \left \Vert
T_{\Omega,b}f_{1}\right \Vert _{L_{p,w}\left(  B\right)  }+\left \Vert
T_{\Omega,b}f_{2}\right \Vert _{L_{p,w}\left(  B\right)  }.
\]
For $s^{\prime}\leq p$ and $w\in A_{\frac{p}{s^{\prime}}}$, from the
boundedness of $T_{\Omega,b}$ on $L_{p}(w)$ (see Theorem \ref{teo2}) it
follows that:%
\begin{align*}
\left \Vert T_{\Omega,b}f_{1}\right \Vert _{L_{p,w}\left(  B\right)  }  &
\leq \left \Vert T_{\Omega,b}f_{1}\right \Vert _{L_{p,w}\left(  {\mathbb{R}^{n}%
}\right)  }\\
& \lesssim \left \Vert b\right \Vert _{\ast}\left \Vert f_{1}\right \Vert
_{L_{p,w}\left(  {\mathbb{R}^{n}}\right)  }=\left \Vert b\right \Vert _{\ast
}\left \Vert f\right \Vert _{L_{p,w}\left(  2B\right)  }.
\end{align*}
It is known that $x\in B$, $y\in \left(  2B\right)  ^{C}$, which implies
$\frac{1}{2}\left \vert x_{0}-y\right \vert \leq \left \vert x-y\right \vert
\leq \frac{3}{2}\left \vert x_{0}-y\right \vert $. Then for $x\in B$, we have%
\begin{align*}
\left \vert T_{\Omega,b}f_{2}\left(  x\right)  \right \vert  & \lesssim
\int \limits_{{\mathbb{R}^{n}}}\frac{\left \vert \Omega \left(  x-y\right)
\right \vert }{\left \vert x-y\right \vert ^{n}}\left \vert b\left(  y\right)
-b\left(  x\right)  \right \vert \left \vert f\left(  y\right)  \right \vert dy\\
& \approx \int \limits_{\left(  2B\right)  ^{C}}\frac{\left \vert \Omega \left(
x-y\right)  \right \vert }{\left \vert x_{0}-y\right \vert ^{n}}\left \vert
b\left(  y\right)  -b\left(  x\right)  \right \vert \left \vert f\left(
y\right)  \right \vert dy.
\end{align*}

Hence, we get%
\begin{align*}
\left \Vert T_{\Omega,b}f_{2}\right \Vert _{L_{p,w}\left(  B\right)  }  &
\lesssim \left(  \int \limits_{B}\left(  \int \limits_{\left(  2B\right)  ^{C}%
}\frac{\left \vert \Omega \left(  x-y\right)  \right \vert }{\left \vert
x_{0}-y\right \vert ^{n}}\left \vert b\left(  y\right)  -b\left(  x\right)
\right \vert \left \vert f\left(  y\right)  \right \vert dy\right)  ^{p}w\left(
x\right)  dx\right)  ^{\frac{1}{p}}\\
& \lesssim \left(  \int \limits_{B}\left(  \int \limits_{\left(  2B\right)  ^{C}%
}\frac{\left \vert \Omega \left(  x-y\right)  \right \vert }{\left \vert
x_{0}-y\right \vert ^{n}}\left \vert b\left(  y\right)  -b_{B,w}\right \vert
\left \vert f\left(  y\right)  \right \vert dy\right)  ^{p}w\left(  x\right)
dx\right)  ^{\frac{1}{p}}\\
& +\left(  \int \limits_{B}\left(  \int \limits_{\left(  2B\right)  ^{C}}%
\frac{\left \vert \Omega \left(  x-y\right)  \right \vert }{\left \vert
x_{0}-y\right \vert ^{n}}\left \vert b\left(  x\right)  -b_{B,w}\right \vert
\left \vert f\left(  y\right)  \right \vert dy\right)  ^{p}w\left(  x\right)
dx\right)  ^{\frac{1}{p}}\\
& =J_{1}+J_{2}.
\end{align*}
We have the following estimation of $J_{1}$. When $s^{\prime}\leq p$, by the
Fubini's theorem%
\begin{align*}
J_{1}  & \approx w\left(  B\left(  x_{0},r\right)  \right)  ^{\frac{1}{p}}%
\int \limits_{\left(  2B\right)  ^{C}}\frac{\left \vert \Omega \left(
x-y\right)  \right \vert }{\left \vert x_{0}-y\right \vert ^{n}}\left \vert
b\left(  y\right)  -b_{B,w}\right \vert \left \vert f\left(  y\right)
\right \vert dy\\
& \approx w\left(  B\left(  x_{0},r\right)  \right)  ^{\frac{1}{p}}%
\int \limits_{\left(  2B\right)  ^{C}}\left \vert \Omega \left(  x-y\right)
\right \vert \left \vert b\left(  y\right)  -b_{B,w}\right \vert \left \vert
f\left(  y\right)  \right \vert \int \limits_{\left \vert x_{0}-y\right \vert
}^{\infty}\frac{dt}{t^{n+1}}dy\\
& \approx w\left(  B\left(  x_{0},r\right)  \right)  ^{\frac{1}{p}}%
\int \limits_{2r}^{\infty}\int \limits_{2r\leq \left \vert x_{0}-y\right \vert \leq
t}\left \vert \Omega \left(  x-y\right)  \right \vert \left \vert b\left(
y\right)  -b_{B,w}\right \vert \left \vert f\left(  y\right)  \right \vert
dy\frac{dt}{t^{n+1}}\\
& \lesssim w\left(  B\left(  x_{0},r\right)  \right)  ^{\frac{1}{p}}%
\int \limits_{2r}^{\infty}\int \limits_{B\left(  x_{0},t\right)  }\left \vert
\Omega \left(  x-y\right)  \right \vert \left \vert b\left(  y\right)
-b_{B,w}\right \vert \left \vert f\left(  y\right)  \right \vert dy\frac
{dt}{t^{n+1}}\text{ holds.}%
\end{align*}
Applying the H\"{o}lder's inequality and by Lemma \ref{lemma0}, (\ref{10}) and
(\ref{3}), we get%
\begin{align*}
J_{1}  & \lesssim \Vert b\Vert_{\ast}w\left(  B\left(  x_{0},r\right)  \right)
^{\frac{1}{p}}\int \limits_{2r}^{\infty}\left(  1+\ln \frac{t}{r}\right)
\left \Vert \Omega \left(  \cdot-y\right)  \right \Vert _{L_{s}\left(  B\left(
x_{0},t\right)  \right)  }\left \Vert f\right \Vert _{L_{s^{\prime}}\left(
B\left(  x_{0},t\right)  \right)  }\frac{dt}{t^{n+1}}\\
& \lesssim \Vert b\Vert_{\ast}w\left(  B\left(  x_{0},r\right)  \right)
^{\frac{1}{p}}\int \limits_{2r}^{\infty}\left(  1+\ln \frac{t}{r}\right)
\left \Vert f\right \Vert _{L_{p,w}\left(  B\left(  x_{0},t\right)  \right)
}\left \Vert w^{-\frac{s^{\prime}}{p}}\right \Vert _{L_{\left(  \frac
{p}{s^{\prime}}\right)  ^{\prime}}\left(  B\left(  x_{0},t\right)  \right)
}^{\frac{1}{s^{\prime}}}\left \vert B\left(  x_{0},2t\right)  \right \vert
^{\frac{1}{s}}\frac{dt}{t^{n+1}}\\
& \lesssim \Vert b\Vert_{\ast}w\left(  B\left(  x_{0},r\right)  \right)
^{\frac{1}{p}}\int \limits_{2r}^{\infty}\left(  1+\ln \frac{t}{r}\right)
\left \Vert f\right \Vert _{L_{p,w}\left(  B\left(  x_{0},t\right)  \right)
}w\left(  B\left(  x_{0},t\right)  \right)  ^{-\frac{1}{p}}\frac{dt}{t}.
\end{align*}

In order to estimate $J_{2}$ note that%
\[
J_{2}=\left \Vert \left(  b\left(  \cdot \right)  -b_{B\left(  x_{0},t\right)
,w}\right)  \right \Vert _{L_{p,w}\left(  B\left(  x_{0},t\right)  \right)
}\int \limits_{\left(  2B\right)  ^{C}}\frac{\left \vert \Omega \left(
x-y\right)  \right \vert }{\left \vert x_{0}-y\right \vert ^{n}}\left \vert
f\left(  y\right)  \right \vert dy.
\]

By (\ref{11}) and Lemma \ref{lemma0}, we get%
\begin{align*}
J_{2}  & \lesssim \Vert b\Vert_{\ast}w\left(  B\left(  x_{0},r\right)  \right)
^{\frac{1}{p}}\int \limits_{\left(  2B\right)  ^{C}}\frac{\left \vert
\Omega \left(  x-y\right)  \right \vert }{\left \vert x_{0}-y\right \vert ^{n}%
}\left \vert f\left(  y\right)  \right \vert dy\\
& \lesssim \Vert b\Vert_{\ast}w\left(  B\left(  x_{0},r\right)  \right)
^{\frac{1}{p}}\int \limits_{2r}^{\infty}\Vert f\Vert_{L_{p,w}(B(x_{0}%
,t))}w\left(  B\left(  x_{0},t\right)  \right)  ^{-\frac{1}{p}}\frac{dt}{t}.
\end{align*}

Summing up $J_{1}$ and $J_{2}$, for all $p\in \left(  1,\infty \right)  $ we get%
\[
\left \Vert T_{\Omega,b}f_{2}\right \Vert _{L_{p,w}\left(  B\right)  }%
\lesssim \Vert b\Vert_{\ast}\,w\left(  B\left(  x_{0},r\right)  \right)
^{\frac{1}{p}}\int \limits_{2r}^{\infty}\left(  1+\ln \frac{t}{r}\right)  \Vert
f\Vert_{L_{p,w}(B(x_{0},t))}w\left(  B\left(  x_{0},t\right)  \right)
^{-\frac{1}{p}}\frac{dt}{t}%
\]

Finally, we have the following%
\[
\left \Vert T_{\Omega,b}f\right \Vert _{L_{p,w}\left(  B\right)  }%
\lesssim \left \Vert b\right \Vert _{\ast}\left \Vert f\right \Vert _{L_{p,w}%
\left(  2B\right)  }+\Vert b\Vert_{\ast}\,w\left(  B\left(  x_{0},r\right)
\right)  ^{\frac{1}{p}}\int \limits_{2r}^{\infty}\left(  1+\ln \frac{t}%
{r}\right)  \Vert f\Vert_{L_{p,w}(B(x_{0},t))}w\left(  B\left(  x_{0}%
,t\right)  \right)  ^{-\frac{1}{p}}\frac{dt}{t}.
\]

On the other hand by (\ref{12}), we have
\[
\left \Vert T_{\Omega,b}f\right \Vert _{L_{p,w}\left(  B\right)  }\lesssim \Vert
b\Vert_{\ast}\,w\left(  B\left(  x_{0},r\right)  \right)  ^{\frac{1}{p}}%
\int \limits_{2r}^{\infty}\left(  1+\ln \frac{t}{r}\right)  \Vert f\Vert
_{L_{p,w}(B(x_{0},t))}w\left(  B\left(  x_{0},t\right)  \right)  ^{-\frac
{1}{p}}\frac{dt}{t}.
\]

For the case of $1<p<s$, $w^{1-p^{\prime}}\in$ $A_{\frac{p^{\prime}}%
{s^{\prime}}}$, we can also use the same method, so we omit the details. This
completes the proof of Lemma \ref{Lemma 5}.
\end{proof}

Now we can give the following theorem (our main result).

\begin{theorem}
\label{teo15}$\left(  \text{Our main result}\right)  $ Suppose that $\Omega \in
L_{s}(S^{n-1})$, $s>1$, is homogeneous of degree zero and $T_{\Omega,b}$ is a
sublinear operator satisfying condition (\ref{e2}), bounded on $L_{p}(w) $.
Let $1<p<\infty$ and $b\in BMO\left(
\mathbb{R}
^{n}\right)  $.\newline Let also, for $s^{\prime}\leq p$ and $w\in A_{\frac
{p}{s^{\prime}}}$ the pair $(\varphi_{1},\varphi_{2})$ satisfies the condition%
\begin{equation}
\int \limits_{r}^{\infty}\left(  1+\ln \frac{t}{r}\right)  \frac
{\operatorname*{essinf}\limits_{t<\tau<\infty}\varphi_{1}(x,\tau)w\left(
B\left(  x,\tau \right)  \right)  ^{\frac{1}{p}}}{w\left(  B\left(  x,t\right)
\right)  ^{\frac{1}{p}}}\frac{dt}{t}\leq C\, \varphi_{2}(x,r),\label{47}%
\end{equation}
and for $p<s$ and $w^{1-p^{\prime}}\in$ $A_{\frac{p^{\prime}}{s^{\prime}}}$
the pair $(\varphi_{1},\varphi_{2})$ satisfies the condition%
\begin{equation}
\int \limits_{r}^{\infty}\left(  1+\ln \frac{t}{r}\right)  \frac
{\operatorname*{essinf}\limits_{t<\tau<\infty}\varphi_{1}(x,\tau)\left \Vert
w\right \Vert _{L_{\frac{s}{s-p}}\left(  B\left(  x,r\right)  \right)  }%
^{\frac{1}{p}}}{\left \Vert w\right \Vert _{L_{\frac{s}{s-p}}\left(  B\left(
x,t\right)  \right)  }^{\frac{1}{p}}}\frac{dt}{t}\leq C\, \varphi
_{2}(x,r)\frac{w\left(  B\left(  x,r\right)  \right)  ^{\frac{1}{p}}%
}{\left \Vert w\right \Vert _{L_{\frac{s}{s-p}}\left(  B\left(  x,r\right)
\right)  }^{\frac{1}{p}}},\label{48}%
\end{equation}
where $C$ does not depend on $x$ and $r$.\newline Then, the operator
$T_{\Omega,b}$ is bounded from $M_{p,\varphi_{1}}\left(  w\right)  $ to
$M_{p,\varphi_{2}}\left(  w\right)  $. Moreover%
\begin{equation}
\left \Vert T_{\Omega,b}f\right \Vert _{M_{p,\varphi_{2}}\left(  w\right)
}\lesssim \left \Vert b\right \Vert _{\ast}\left \Vert f\right \Vert _{M_{p,\varphi
_{1}}\left(  w\right)  }.\label{49}%
\end{equation}

\end{theorem}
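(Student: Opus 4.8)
The plan is to run the proof of Theorem~\ref{teo9} essentially verbatim, now feeding in the commutator estimate of Lemma~\ref{Lemma 5} in place of Lemma~\ref{lemma2}. All of the genuine analytic work---the rough kernel, the decomposition $f=f_{1}+f_{2}$, the $BMO$ factor, and the logarithmic growth arising from $|b_{B(x_{0},r)}-b_{B(x_{0},t)}|$---has already been absorbed into Lemma~\ref{Lemma 5}. What remains is purely to convert its local-ball inequality into a statement about the $M_{p,\varphi_{2}}(w)$ norm by integrating it against the Zygmund-type conditions (\ref{47}) and (\ref{48}). The only structural novelty relative to Theorem~\ref{teo9} is the extra factor $\left(1+\ln\frac{t}{r}\right)$ inside the $\frac{dt}{t}$ integral, and conditions (\ref{47})--(\ref{48}) have been written precisely so as to swallow it.

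First I would record the same monotonicity observation used in Theorem~\ref{teo9}: since $f\in M_{p,\varphi_{1}}(w)$ and $t\mapsto\Vert f\Vert_{L_{p,w}(B(x_{0},t))}$ is non-decreasing, (\ref{4}) gives
\[
\frac{\Vert f\Vert_{L_{p,w}(B(x_{0},t))}}{\operatorname*{essinf}\limits_{t<\tau<\infty}\varphi_{1}(x_{0},\tau)\,w(B(x_{0},\tau))^{\frac{1}{p}}}\leq\operatorname*{esssup}\limits_{0<\tau<\infty}\frac{\Vert f\Vert_{L_{p,w}(B(x_{0},\tau))}}{\varphi_{1}(x_{0},\tau)\,w(B(x_{0},\tau))^{\frac{1}{p}}}\leq\Vert f\Vert_{M_{p,\varphi_{1}}(w)}.
\]
In the case $s^{\prime}\leq p$, $w\in A_{\frac{p}{s^{\prime}}}$, I would then multiply and divide the integrand appearing in (\ref{40*}) by $\operatorname*{essinf}_{t<\tau<\infty}\varphi_{1}(x_{0},\tau)w(B(x_{0},\tau))^{\frac{1}{p}}$, pull the constant $\Vert f\Vert_{M_{p,\varphi_{1}}(w)}$ out of the integral, and recognise what is left as exactly the left-hand side of (\ref{47}). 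This yields
\[
\int\limits_{r}^{\infty}\left(1+\ln\frac{t}{r}\right)\Vert f\Vert_{L_{p,w}(B(x_{0},t))}\,w(B(x_{0},t))^{-\frac{1}{p}}\frac{dt}{t}\lesssim\Vert f\Vert_{M_{p,\varphi_{1}}(w)}\,\varphi_{2}(x_{0},r).
\]

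Next I would combine this with Lemma~\ref{Lemma 5}. Writing out the definition of the $M_{p,\varphi_{2}}(w)$ norm, the prefactor $w(B(x_{0},r))^{\frac{1}{p}}$ produced by (\ref{40*}) cancels the normalising factor $w(B(x_{0},r))^{-\frac{1}{p}}$, so after taking the supremum over $x_{0}$ and $r$ the previous display gives precisely $\Vert T_{\Omega,b}f\Vert_{M_{p,\varphi_{2}}(w)}\lesssim\Vert b\Vert_{\ast}\Vert f\Vert_{M_{p,\varphi_{1}}(w)}$, which is (\ref{49}). The case $p<s$, $w^{1-p^{\prime}}\in A_{\frac{p^{\prime}}{s^{\prime}}}$, is handled identically, replacing $w(B(x_{0},t))^{\frac{1}{p}}$ by $\Vert w\Vert_{L_{\frac{s}{s-p}}(B(x_{0},t))}^{\frac{1}{p}}$ throughout and invoking (\ref{48}) in place of (\ref{47}); the extra ratio $w(B(x,r))^{\frac{1}{p}}\Vert w\Vert_{L_{\frac{s}{s-p}}(B(x,r))}^{-\frac{1}{p}}$ on the right of (\ref{48}) is exactly what reconciles the normalisation $w(B(x_{0},r))^{-\frac{1}{p}}$ in the target norm against the prefactor $\Vert w\Vert_{L_{\frac{s}{s-p}}(B(x_{0},r))}^{\frac{1}{p}}$ that Lemma~\ref{Lemma 5} supplies in that regime.

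I do not anticipate a real obstacle, since the theorem is in effect a corollary of Lemma~\ref{Lemma 5}; the substantive estimation is already done there. The one point demanding genuine care is the weight bookkeeping in the second case, making sure the $w(B)$ and $\Vert w\Vert_{L_{\frac{s}{s-p}}(B)}$ factors balance when the $M_{p,\varphi_{2}}(w)$ normalisation meets the Lemma's prefactor; condition (\ref{48}) was engineered for exactly this cancellation, so once it is tracked the argument is routine. Taking the supremum over $x_{0}\in\mathbb{R}^{n}$ and $r>0$ in both regimes then completes the proof.
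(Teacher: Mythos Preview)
Your proposal is correct and follows essentially the same approach as the paper: both use the monotonicity of $t\mapsto\Vert f\Vert_{L_{p,w}(B(x_{0},t))}$ together with (\ref{4}) to extract $\Vert f\Vert_{M_{p,\varphi_{1}}(w)}$, then invoke condition (\ref{47}) (resp.\ (\ref{48})) to collapse the integral in Lemma~\ref{Lemma 5} to $\varphi_{2}(x_{0},r)$, and finally take the supremum defining the $M_{p,\varphi_{2}}(w)$ norm. The paper's proof is in fact slightly terser in the second case, simply writing ``we can also use the same method, so we omit the details,'' whereas you spell out explicitly how the factor $w(B(x,r))^{1/p}\Vert w\Vert_{L_{\frac{s}{s-p}}(B(x,r))}^{-1/p}$ on the right of (\ref{48}) reconciles the normalisation; this is a helpful clarification but not a different argument.
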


\begin{proof}
since $f\in M_{p,\varphi_{1}}\left(  w\right)  $, by (\ref{4}) and the
non-decreasing, with respect to $t$, of the norm $\left \Vert f\right \Vert
_{L_{p,w}\left(  B\left(  x_{0},t\right)  \right)  }$, we get%
\begin{align*}
& \frac{\left \Vert f\right \Vert _{L_{p,w}\left(  B\left(  x_{0},t\right)
\right)  }}{\operatorname*{essinf}\limits_{0<t<\tau<\infty}\varphi_{1}%
(x_{0},\tau)w\left(  B\left(  x_{0},\tau \right)  \right)  ^{\frac{1}{p}}}\\
& \leq \operatorname*{esssup}\limits_{0<t<\tau<\infty}\frac{\left \Vert
f\right \Vert _{L_{p,w}\left(  B\left(  x_{0},t\right)  \right)  }}{\varphi
_{1}(x_{0},\tau)w\left(  B\left(  x_{0},\tau \right)  \right)  ^{\frac{1}{p}}%
}\\
& \leq \operatorname*{esssup}\limits_{0<\tau<\infty}\frac{\left \Vert
f\right \Vert _{L_{p,w}\left(  B\left(  x_{0},\tau \right)  \right)  }}%
{\varphi_{1}(x_{0},\tau)w\left(  B\left(  x_{0},\tau \right)  \right)
^{\frac{1}{p}}}\\
& \leq \left \Vert f\right \Vert _{M_{p,\varphi_{1}}\left(  w\right)  }.
\end{align*}
For $s^{\prime}\leq p<\infty$, since $(\varphi_{1},\varphi_{2})$ satisfies
(\ref{47}), we have%
\begin{align*}
& \int \limits_{r}^{\infty}\left(  1+\ln \frac{t}{r}\right)  \left \Vert
f\right \Vert _{L_{p,w}\left(  B\left(  x_{0},t\right)  \right)  }w\left(
B\left(  x_{0},t\right)  \right)  ^{-\frac{1}{p}}\frac{dt}{t}\\
& \leq \int \limits_{r}^{\infty}\left(  1+\ln \frac{t}{r}\right)  \frac
{\left \Vert f\right \Vert _{L_{p,w}\left(  B\left(  x_{0},t\right)  \right)  }%
}{\operatorname*{essinf}\limits_{t<\tau<\infty}\varphi_{1}(x_{0},\tau)w\left(
B\left(  x_{0},\tau \right)  \right)  ^{\frac{1}{p}}}\frac
{\operatorname*{essinf}\limits_{t<\tau<\infty}\varphi_{1}(x_{0},\tau)w\left(
B\left(  x_{0},\tau \right)  \right)  ^{\frac{1}{p}}}{w\left(  B\left(
x_{0},t\right)  \right)  ^{\frac{1}{p}}}\frac{dt}{t}\\
& \leq C\left \Vert f\right \Vert _{M_{p,\varphi_{1}}\left(  w\right)  }%
\int \limits_{r}^{\infty}\left(  1+\ln \frac{t}{r}\right)  \frac
{\operatorname*{essinf}\limits_{t<\tau<\infty}\varphi_{1}(x_{0},\tau)w\left(
B\left(  x_{0},\tau \right)  \right)  ^{\frac{1}{p}}}{w\left(  B\left(
x_{0},t\right)  \right)  ^{\frac{1}{p}}}\frac{dt}{t}\\
& \leq C\left \Vert f\right \Vert _{M_{p,\varphi_{1}}\left(  w\right)  }%
\varphi_{2}(x_{0},r).
\end{align*}
Then by (\ref{40*}), we get%
\begin{align*}
\left \Vert T_{\Omega,b}f\right \Vert _{M_{p,\varphi_{2}}\left(  w\right)  }  &
=\sup_{x_{0}\in{\mathbb{R}^{n},}r>0}\varphi_{2}\left(  x_{0},r\right)
^{-1}w\left(  B\left(  x_{0},r\right)  \right)  ^{-\frac{1}{p}}\left \Vert
T_{\Omega,b}f\right \Vert _{L_{p,w}\left(  B\left(  x_{0},r\right)  \right)
}\\
& \leq C\left \Vert b\right \Vert _{\ast}\sup_{x_{0}\in{\mathbb{R}^{n},}%
r>0}\varphi_{2}\left(  x_{0},r\right)  ^{-1}\int \limits_{r}^{\infty}\left(
1+\ln \frac{t}{r}\right)  \left \Vert f\right \Vert _{L_{p,w}\left(  B\left(
x_{0},t\right)  \right)  }w\left(  B\left(  x_{0},t\right)  \right)
^{-\frac{1}{p}}\frac{dt}{t}\\
& \leq C\left \Vert b\right \Vert _{\ast}\left \Vert f\right \Vert _{M_{p,\varphi
_{1}}\left(  w\right)  }.
\end{align*}
For the case of $1<p<s$, we can also use the same method, so we omit the
details. This completes the proof of Theorem \ref{teo15}.
\end{proof}

For the sublinear commutator of the fractional maximal operator with rough
kernel which is defined as follows%

\[
M_{\Omega,b}\left(  f\right)  (x)=\sup_{t>0}|B(x,t)|^{-1}\int \limits_{B(x,t)}%
\left \vert b\left(  x\right)  -b\left(  y\right)  \right \vert \left \vert
\Omega \left(  x-y\right)  \right \vert |f(y)|dy
\]
and for the linear commutator of the singular integral $[b,\overline
{T}_{\Omega}]$ by Theorem \ref{teo15}, we get the following new result.

\begin{corollary}
\label{corollary 12}Suppose that $\Omega \in L_{s}(S^{n-1})$, $s>1$, is
homogeneous of degree zero, $1<p<\infty$ and $b\in BMO\left(
\mathbb{R}
^{n}\right)  $. If for $s^{\prime}\leq p$ and $w\in A_{\frac{p}{s^{\prime}}} $
the pair $(\varphi_{1},\varphi_{2})$ satisfies condition (\ref{47}) and for
$p<s$ and $w^{1-p^{\prime}}\in$ $A_{\frac{p^{\prime}}{s^{\prime}}}$ the pair
$(\varphi_{1},\varphi_{2})$ satisfies condition (\ref{48}). Then, the
operators $M_{\Omega,b}$ and $[b,\overline{T}_{\Omega}]$ are bounded from
$M_{p,\varphi_{1}}\left(  w\right)  $ to $M_{p,\varphi_{2}}\left(  w\right)  $.
\end{corollary}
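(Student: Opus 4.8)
The plan is to realize Corollary \ref{corollary 12} as a direct application of Theorem \ref{teo15} to the two concrete operators $M_{\Omega,b}$ and $[b,\overline{T}_{\Omega}]$. Theorem \ref{teo15} applies to any sublinear operator $T_{\Omega,b}$ that (a) obeys the pointwise size condition (\ref{e2}) and (b) is bounded on $L_{p}(w)$ under the relevant weight hypothesis; its Morrey-space conclusion then follows automatically once $(\varphi_{1},\varphi_{2})$ satisfies (\ref{47}) in the regime $s^{\prime}\leq p$, $w\in A_{\frac{p}{s^{\prime}}}$ (resp. (\ref{48}) in the regime $p<s$, $w^{1-p^{\prime}}\in A_{\frac{p^{\prime}}{s^{\prime}}}$). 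So the whole task reduces to checking (a) and (b) for each of the two operators; the pair conditions (\ref{47})--(\ref{48}) are hypotheses of the corollary and are carried over unchanged.

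First I would verify the size condition (\ref{e2}). For $[b,\overline{T}_{\Omega}]$ this is immediate: taking absolute values inside the principal-value integral in (\ref{e3}) gives $|[b,\overline{T}_{\Omega}]f(x)|\leq \int_{\mathbb{R}^{n}}|b(x)-b(y)|\,\frac{|\Omega(x-y)|}{|x-y|^{n}}\,|f(y)|\,dy$, which is precisely (\ref{e2}). For the maximal commutator $M_{\Omega,b}$ I would use that $y\in B(x,t)$ forces $|x-y|<t$, whence $|B(x,t)|^{-1}=(v_{n}t^{n})^{-1}\leq (v_{n}|x-y|^{n})^{-1}$; substituting this bound into the definition of $M_{\Omega,b}$, enlarging the domain of integration from $B(x,t)$ to $\mathbb{R}^{n}$, and then taking the supremum over $t>0$ yields $M_{\Omega,b}f(x)\leq v_{n}^{-1}\int_{\mathbb{R}^{n}}|b(x)-b(y)|\,\frac{|\Omega(x-y)|}{|x-y|^{n}}\,|f(y)|\,dy$, again of the form (\ref{e2}). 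Both operators are plainly sublinear, since the controlling right-hand sides are subadditive in $f$, so hypothesis (a) holds in both cases.

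Next I would check the $L_{p}(w)$-boundedness (b). For $[b,\overline{T}_{\Omega}]$ this is exactly Theorem \ref{teo2}: its case (i) ($s^{\prime}\leq p$, $w\in A_{\frac{p}{s^{\prime}}}$) and case (ii) ($p\leq s$, $w^{1-p^{\prime}}\in A_{\frac{p^{\prime}}{s^{\prime}}}$) cover the two weight regimes assumed in the corollary verbatim. For the maximal commutator $M_{\Omega,b}$, the corresponding $A_{\frac{p}{s^{\prime}}}$ / $A_{\frac{p^{\prime}}{s^{\prime}}}$ boundedness on $L_{p}(w)$ must be supplied separately; I expect this to be the one genuinely external ingredient, since it is not reproduced among the results in the excerpt, and I would cite the weighted theory for rough maximal commutators (cf. \cite{LuDingY}) to close this gap.

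Having established (a) and (b) for each operator, Theorem \ref{teo15} applies directly with $T_{\Omega,b}$ taken to be $M_{\Omega,b}$ and then $[b,\overline{T}_{\Omega}]$: under (\ref{47}) when $s^{\prime}\leq p$, $w\in A_{\frac{p}{s^{\prime}}}$ and under (\ref{48}) when $p<s$, $w^{1-p^{\prime}}\in A_{\frac{p^{\prime}}{s^{\prime}}}$, each operator is bounded from $M_{p,\varphi_{1}}(w)$ to $M_{p,\varphi_{2}}(w)$, with the quantitative estimate inherited from (\ref{49}). The main obstacle is therefore not the Morrey-space machinery, which is entirely encapsulated in Theorem \ref{teo15}, but confirming the weighted $L_{p}$-boundedness of the rough maximal commutator $M_{\Omega,b}$; everything else is a short pointwise verification.
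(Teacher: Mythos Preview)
Your proposal is correct and follows exactly the paper's approach: the paper presents Corollary~\ref{corollary 12} as an immediate consequence of Theorem~\ref{teo15} (``by Theorem~\ref{teo15}, we get the following new result''), without a separate proof, and your write-up simply makes explicit the two routine verifications---the size condition (\ref{e2}) and the weighted $L_{p}$-boundedness---that justify invoking that theorem for each operator. Your flag that the weighted $L_{p}(w)$-boundedness of $M_{\Omega,b}$ requires an external citation is accurate, and the paper is equally silent on this point.
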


In the case of $w=1$ from Theorem \ref{teo15}, we get

\begin{corollary}
$\left(  \text{see \cite{BGGS, Gurbuz}}\right)  $ Suppose that $\Omega \in
L_{s}(S^{n-1})$, $s>1$, is homogeneous of degree zero and $T_{\Omega,b}$ is a
sublinear operator satisfying condition (\ref{e2}), bounded on $L_{p}%
({\mathbb{R}^{n}})$. Let $1<p<\infty$ and $b\in BMO\left(
\mathbb{R}
^{n}\right)  $.\newline Let also, for $s^{\prime}\leq p$ the pair
$(\varphi_{1},\varphi_{2})$ satisfies the condition%
\[
\int \limits_{r}^{\infty}\left(  1+\ln \frac{t}{r}\right)  \frac
{\operatorname*{essinf}\limits_{t<\tau<\infty}\varphi_{1}\left(
x,\tau \right)  \tau^{\frac{n}{p}}}{t^{\frac{n}{p}+1}}dt\leq C\varphi
_{2}\left(  x,r\right)  ,
\]
and for $p<s$ the pair $(\varphi_{1},\varphi_{2})$ satisfies the condition%
\[
\int \limits_{r}^{\infty}\left(  1+\ln \frac{t}{r}\right)  \frac
{\operatorname*{essinf}\limits_{t<\tau<\infty}\varphi_{1}\left(
x,\tau \right)  \tau^{\frac{n}{p}}}{t^{\frac{n}{p}-\frac{n}{s}+1}}dt\leq
C\varphi_{2}\left(  x,r\right)  r^{\frac{n}{s}},
\]
where $C$ does not depend on $x$ and $r$.\newline Then, the operator
$T_{\Omega,b}$ is bounded from $M_{p,\varphi_{1}}$ to $M_{p,\varphi_{2}}$.
Moreover%
\[
\left \Vert T_{\Omega,b}f\right \Vert _{M_{p,\varphi_{2}}}\lesssim \left \Vert
b\right \Vert _{\ast}\left \Vert f\right \Vert _{M_{p,\varphi_{1}}}.
\]

\end{corollary}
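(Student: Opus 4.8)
The plan is to obtain the corollary as the special case $w\equiv1$ of Theorem \ref{teo15}. The constant weight $w\equiv1$ lies in every Muckenhoupt class, so $1\in A_{\frac{p}{s^{\prime}}}$ and $1^{1-p^{\prime}}=1\in A_{\frac{p^{\prime}}{s^{\prime}}}$ hold automatically, the hypothesis that $T_{\Omega,b}$ be bounded on $L_{p}(w)$ becomes boundedness on $L_{p}({\mathbb{R}^{n}})$, and $M_{p,\varphi}(1)=M_{p,\varphi}$. Hence everything reduces to checking that the two integral conditions of the corollary are precisely (\ref{47}) and (\ref{48}) read at $w\equiv1$. To keep the bookkeeping transparent I would simply re-run the proof of Theorem \ref{teo15} with $w\equiv1$, starting from the main Lemma \ref{Lemma 5}.

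For the case $s^{\prime}\leq p$, inserting $w\left(B(x_{0},\rho)\right)^{\frac{1}{p}}=v_{n}^{\frac{1}{p}}\rho^{\frac{n}{p}}$ into (\ref{40*}) turns the local estimate of Lemma \ref{Lemma 5} into
\[
\left\Vert T_{\Omega,b}f\right\Vert_{L_{p}\left(B(x_{0},r)\right)}\lesssim\left\Vert b\right\Vert_{\ast}\,r^{\frac{n}{p}}\int\limits_{2r}^{\infty}\left(1+\ln\frac{t}{r}\right)\left\Vert f\right\Vert_{L_{p}\left(B(x_{0},t)\right)}\,t^{-\frac{n}{p}}\frac{dt}{t}.
\]
In forming $\left\Vert T_{\Omega,b}f\right\Vert_{M_{p,\varphi_{2}}}$ the prefactor $\left\vert B(x_{0},r)\right\vert^{-\frac{1}{p}}$ cancels $r^{n/p}$ up to a constant, and the elementary bound $\left\Vert f\right\Vert_{L_{p}(B(x_{0},t))}\leq C\,\operatorname*{essinf}\limits_{t<\tau<\infty}\left[\varphi_{1}(x_{0},\tau)\tau^{\frac{n}{p}}\right]\left\Vert f\right\Vert_{M_{p,\varphi_{1}}}$ (which follows from (\ref{4}) and the monotonicity of $t\mapsto\left\Vert f\right\Vert_{L_{p}(B(x_{0},t))}$ exactly as in the proof of Theorem \ref{teo15}) reduces the required boundedness to the first displayed condition of the corollary, after rewriting $t^{-n/p}\frac{dt}{t}=\frac{dt}{t^{n/p+1}}$.

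For the case $p<s$ I would use the second estimate of Lemma \ref{Lemma 5}, now with $\left\Vert w\right\Vert_{L_{\frac{s}{s-p}}(B(x_{0},\rho))}^{\frac{1}{p}}=\left(v_{n}\rho^{n}\right)^{\frac{s-p}{sp}}=c\,\rho^{\frac{n}{p}-\frac{n}{s}}$. The same two steps go through verbatim; this time the surviving power is $t^{-(n/p-n/s)}$ inside the integral, while the Morrey prefactor produces the factor $r^{n/s}$ through $\left\vert B(x_{0},r)\right\vert^{-\frac{1}{p}}r^{\frac{n}{p}-\frac{n}{s}}\approx r^{n/s}$ (using $\frac{1}{p}-\frac{s-p}{sp}=\frac{1}{s}$). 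This yields exactly the second displayed condition of the corollary, and (\ref{49}) specializes to $\left\Vert T_{\Omega,b}f\right\Vert_{M_{p,\varphi_{2}}}\lesssim\left\Vert b\right\Vert_{\ast}\left\Vert f\right\Vert_{M_{p,\varphi_{1}}}$.

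There is no genuine analytic obstacle here, since the whole content sits in Lemma \ref{Lemma 5} and Theorem \ref{teo15}; the only point requiring attention is the bookkeeping of the powers of $\rho$, $t$, $r$ and of the normalizing constant $v_{n}$, and in particular the observation that the essential infimum produced by the monotonicity argument is taken over $\varphi_{1}(x_{0},\tau)\tau^{\frac{n}{p}}$, so that the weight is evaluated at the running radius $\tau$ (as in (\ref{47})) rather than at the fixed radius $r$.
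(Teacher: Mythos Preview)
Your approach is correct and coincides with the paper's: the corollary is obtained from Theorem~\ref{teo15} by specializing $w\equiv1$, and you have carried out the bookkeeping that the paper leaves implicit. One small slip: in the $p<s$ case you write $\left\vert B(x_{0},r)\right\vert^{-1/p}r^{n/p-n/s}\approx r^{n/s}$, but this product is $\approx r^{-n/s}$; moving this factor to the right-hand side is what produces the $r^{n/s}$ in the second condition, so the final conclusion is unaffected.
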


In the case of $\varphi_{1}\left(  x,r\right)  =\varphi_{2}\left(  x,r\right)
\equiv w\left(  B\left(  x,r\right)  \right)  ^{^{\frac{\kappa-1}{p}}}$ from
Theorem \ref{teo15}, we get the following new result.

\begin{corollary}
Let $1<p<\infty$, $\Omega \in L_{s}\left(  S^{n-1}\right)  $, $s>1$, be
homogeneous of degree zero, $0<\kappa<1$ and $b\in BMO\left(
\mathbb{R}
^{n}\right)  $. Let also $T_{\Omega,b}$ be a sublinear operator satisfying
condition (\ref{e2}) and bounded on $L_{p}(w)$. For $s^{\prime}\leq p$ and
$w\in A_{\frac{p}{s^{\prime}}}$ or $p<s$ and $w^{1-p^{\prime}}\in
A_{\frac{p^{\prime}}{s^{\prime}}}$, the operator $T_{\Omega,b}$ is bounded on
the weighted Morrey spaces $L_{p,\kappa}(w)$.
\end{corollary}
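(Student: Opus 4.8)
The plan is to obtain this corollary as a direct specialization of Theorem \ref{teo15}, taking the pair $\varphi_1(x,r)=\varphi_2(x,r)\equiv w(B(x,r))^{\frac{\kappa-1}{p}}$. For this choice the generalized weighted Morrey space $M_{p,\varphi_i}(w)$ is precisely the weighted Morrey space $L_{p,\kappa}(w)$ (item $(2)$ of the Remark following the definition of $M_{p,\varphi}(w)$), so the assertion that $T_{\Omega,b}$ is bounded on $L_{p,\kappa}(w)$ is literally the conclusion $T_{\Omega,b}\colon M_{p,\varphi_1}(w)\to M_{p,\varphi_2}(w)$ of Theorem \ref{teo15}. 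Hence the entire task reduces to checking that this explicit pair satisfies hypothesis (\ref{47}) in the regime $s^{\prime}\le p$, $w\in A_{\frac{p}{s^{\prime}}}$, and hypothesis (\ref{48}) in the regime $p<s$, $w^{1-p^{\prime}}\in A_{\frac{p^{\prime}}{s^{\prime}}}$.

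First I would verify (\ref{47}). The key simplification is that the quantity under the essential infimum factors as $\varphi_1(x,\tau)\,w(B(x,\tau))^{\frac{1}{p}}=w(B(x,\tau))^{\frac{\kappa}{p}}$, which is non-decreasing in $\tau$ because $\tau\mapsto w(B(x,\tau))$ is non-decreasing and $\frac{\kappa}{p}>0$; consequently $\operatorname*{essinf}_{t<\tau<\infty}w(B(x,\tau))^{\frac{\kappa}{p}}=w(B(x,t))^{\frac{\kappa}{p}}$. After this cancellation, and since $\varphi_2(x,r)=w(B(x,r))^{\frac{\kappa-1}{p}}$, condition (\ref{47}) collapses to the single inequality
\[
\int_{r}^{\infty}\Bigl(1+\ln\tfrac{t}{r}\Bigr)\,w(B(x,t))^{\frac{\kappa-1}{p}}\,\frac{dt}{t}\le C\,w(B(x,r))^{\frac{\kappa-1}{p}}.
\]

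To prove this I would exploit the reverse-doubling growth of $w(B(x,\cdot))$. Since $w\in A_{\frac{p}{s^{\prime}}}\subset A_{\infty}$, the upper estimate in Lemma \ref{lemma100}$(3)$, applied with $S=B(x,r)\subset B=B(x,t)$ for $t>r$, gives $w(B(x,r))/w(B(x,t))\le C(r/t)^{n\delta}$ for some $\delta>0$, that is $w(B(x,t))\ge C^{-1}(t/r)^{n\delta}w(B(x,r))$. As $\frac{\kappa-1}{p}<0$, raising this to the power $\frac{\kappa-1}{p}$ reverses the inequality and yields $w(B(x,t))^{\frac{\kappa-1}{p}}\le C^{\prime}(t/r)^{-\gamma}w(B(x,r))^{\frac{\kappa-1}{p}}$ with $\gamma:=\frac{n\delta(1-\kappa)}{p}>0$. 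The substitution $u=t/r$ then reduces the remaining integral to $\int_{1}^{\infty}(1+\ln u)\,u^{-\gamma-1}\,du=\gamma^{-1}+\gamma^{-2}<\infty$, which is exactly the convergence that the extra logarithmic factor had put in doubt; this establishes (\ref{47}).

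For the second regime I would argue in the same spirit. The essential infimum again reduces to $w(B(x,t))^{\frac{\kappa}{p}}$, and substituting $\varphi_2$ turns (\ref{48}) into
\[
\int_{r}^{\infty}\Bigl(1+\ln\tfrac{t}{r}\Bigr)\,\frac{w(B(x,t))^{\frac{\kappa}{p}}}{\Vert w\Vert_{L_{\frac{s}{s-p}}(B(x,t))}^{\frac{1}{p}}}\,\frac{dt}{t}\le C\,\frac{w(B(x,r))^{\frac{\kappa}{p}}}{\Vert w\Vert_{L_{\frac{s}{s-p}}(B(x,r))}^{\frac{1}{p}}},
\]
where the identities (\ref{7})--(\ref{9}) relate $\Vert w\Vert_{L_{\frac{s}{s-p}}(B)}$ back to the quantities $\Vert w^{1-p^{\prime}}\Vert_{L_{1}(B)}$ and $|B|$ under $w^{1-p^{\prime}}\in A_{\frac{p^{\prime}}{s^{\prime}}}$. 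I expect this $p<s$ estimate to be the main obstacle: one must show that the combined factor $w(B(x,t))^{\frac{\kappa}{p}}\Vert w\Vert_{L_{\frac{s}{s-p}}(B(x,t))}^{-\frac{1}{p}}$ still enjoys a power-type decay in $t/r$, so that the logarithmically weighted integral again converges. Granting the same reverse-doubling control for the $L_{\frac{s}{s-p}}$-norm of $w$ that membership $w^{1-p^{\prime}}\in A_{\frac{p^{\prime}}{s^{\prime}}}$ supplies through (\ref{7})--(\ref{9}) together with Lemma \ref{lemma100}$(3)$ (noting that $w^{1-p^{\prime}}\in A_{p^{\prime}}$ forces $w\in A_p\subset A_{\infty}$), the computation closes exactly as in the first regime, and the two verifications jointly complete the deduction from Theorem \ref{teo15}.
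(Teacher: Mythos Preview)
Your proposal is correct and follows exactly the paper's approach: the paper derives this corollary by specializing Theorem \ref{teo15} with $\varphi_{1}(x,r)=\varphi_{2}(x,r)\equiv w(B(x,r))^{\frac{\kappa-1}{p}}$, and you do precisely this, additionally supplying the verification of conditions (\ref{47}) and (\ref{48}) via the reverse-doubling estimate from Lemma \ref{lemma100}(3), which the paper leaves implicit.
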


When $\Omega \equiv1$, from Theorem \ref{teo15} we get

\begin{corollary}
\label{corollary 5*}Let $1<p<\infty$, $w\in A_{p}$, $b\in BMO\left(
\mathbb{R}
^{n}\right)  $ and the pair $(\varphi_{1},\varphi_{2})$ satisfies condition
(\ref{47}). Let also $T_{b}$ be a sublinear operator satisfying condition
(\ref{e2}) and bounded on $L_{p}(w)$. Then the operator $T_{b}$ is bounded
from $M_{p,\varphi_{1}}\left(  w\right)  $ to $M_{p,\varphi_{2}}\left(
w\right)  $.
\end{corollary}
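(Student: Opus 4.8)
The plan is to obtain this statement as an immediate specialization of Theorem~\ref{teo15}, taking the rough kernel to be trivial. The key observation is that the constant function $\Omega\equiv1$ is homogeneous of degree zero and belongs to $L_s(S^{n-1})$ for \emph{every} $s$, in particular at the endpoint $s=\infty$, where $s'=1$. It is precisely this value $s'=1$ that makes the first branch of the hypotheses of Theorem~\ref{teo15} collapse onto the assumptions made here.

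First I would record that, with $\Omega\equiv1$, the pointwise bound (\ref{e2}) defining $T_{\Omega,b}$ reduces to
\[
\left\vert T_b f(x)\right\vert \le c_0 \int\limits_{\mathbb{R}^n} \left\vert b(x)-b(y)\right\vert \frac{\left\vert f(y)\right\vert}{\left\vert x-y\right\vert^n}\,dy,
\]
so that $T_{\Omega,b}=T_b$ is exactly the operator under consideration, and that the required $L_p(w)$-boundedness is furnished verbatim by the hypothesis. Next I would match the parameters in the branch $s'\le p$: choosing $s=\infty$ gives $s'=1$, so the constraint $s'\le p$ becomes $1\le p$, which holds since $1<p<\infty$, while the weight class $A_{\frac{p}{s'}}=A_{\frac{p}{1}}=A_p$ coincides with the assumed class $w\in A_p$. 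Finally, the Zygmund-type condition (\ref{47}) imposed on $(\varphi_1,\varphi_2)$ is literally the hypothesis of the corollary, so nothing further need be checked for the pair.

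With every hypothesis aligned, I would simply invoke Theorem~\ref{teo15} to conclude that $T_b$ maps $M_{p,\varphi_1}(w)$ boundedly into $M_{p,\varphi_2}(w)$, inheriting from (\ref{49}) the quantitative estimate $\left\Vert T_b f\right\Vert_{M_{p,\varphi_2}(w)}\lesssim\left\Vert b\right\Vert_{\ast}\left\Vert f\right\Vert_{M_{p,\varphi_1}(w)}$. There is no genuine obstacle here: the entire content is the bookkeeping of verifying that the endpoint choice $s=\infty$ (hence $s'=1$) is admissible and routes the general rough-kernel theorem into the smooth case. The one point worth stating explicitly is that only the first branch of Theorem~\ref{teo15} is used, matching the assumed condition (\ref{47}); the second branch, governed by (\ref{48}), is not invoked, so no degeneracy of the weight norm $\left\Vert w\right\Vert_{L_{s/(s-p)}}$ as $s\to\infty$ enters the argument.
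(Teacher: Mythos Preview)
Your proposal is correct and matches the paper's own derivation: the corollary is stated there simply as the specialization ``When $\Omega\equiv1$, from Theorem~\ref{teo15} we get,'' with no further argument given. Your explicit check that $s=\infty$ (hence $s'=1$) collapses the hypotheses $s'\le p$, $w\in A_{p/s'}$ to $1<p$, $w\in A_p$ is exactly the intended reading.
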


\begin{remark}
Corollary \ref{corollary 5*} has been proved in \cite{Karaman}.
\end{remark}

When $\Omega \equiv1$, in the case of $\varphi_{1}\left(  x,r\right)
=\varphi_{2}\left(  x,r\right)  \equiv w\left(  B\left(  x,r\right)  \right)
^{^{\frac{\kappa-1}{p}}}$, from Theorem \ref{teo15} we get the following new result.

\begin{corollary}
\label{corollary 4*}$1<p<\infty$, $0<\kappa<1$, $w\in A_{p}$ and $b\in
BMO\left(
\mathbb{R}
^{n}\right)  $. Let also $T_{b}$ be a sublinear operator satisfying condition
(\ref{e2}) and bounded on $L_{p}(w)$. Then the operator $T_{b}$ is bounded on
the weighted Morrey spaces $L_{p,\kappa}(w)$.
\end{corollary}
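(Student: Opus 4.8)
The plan is to obtain this statement directly from Theorem \ref{teo15} by specializing $\Omega\equiv1$ together with the choice $\varphi_1(x,r)=\varphi_2(x,r)\equiv w(B(x,r))^{\frac{\kappa-1}{p}}$. Since $\Omega\equiv1$ belongs to $L_s(S^{n-1})$ for every $s$, I would take $s=\infty$, so that $s'=1$; then the hypothesis ``$s'\le p$ and $w\in A_{\frac{p}{s'}}$'' of Theorem \ref{teo15} collapses to exactly ``$w\in A_p$'', which is assumed, and $T_{\Omega,b}$ becomes $T_b$. By the remark following the definition of the generalized weighted Morrey space (part (2)), the choice $\varphi(x,r)\equiv w(B(x,r))^{\frac{\kappa-1}{p}}$ identifies $M_{p,\varphi}(w)$ with $L_{p,\kappa}(w)$. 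Hence the whole claim reduces to verifying that this pair $(\varphi_1,\varphi_2)$ satisfies condition (\ref{47}).

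Next I would simplify the left-hand side of (\ref{47}). With $\varphi_1(x,\tau)=w(B(x,\tau))^{\frac{\kappa-1}{p}}$ the quantity appearing inside the essential infimum is
\[
\varphi_1(x,\tau)\,w(B(x,\tau))^{\frac1p}=w(B(x,\tau))^{\frac{\kappa}{p}}.
\]
Because $\tau\mapsto w(B(x,\tau))$ is non-decreasing and $\kappa/p>0$, this is non-decreasing in $\tau$, so $\operatorname*{essinf}\limits_{t<\tau<\infty}w(B(x,\tau))^{\kappa/p}=w(B(x,t))^{\kappa/p}$. Dividing by $w(B(x,t))^{1/p}$, condition (\ref{47}) becomes the Zygmund-type bound
\[
\int\limits_r^\infty\left(1+\ln\frac{t}{r}\right)w\left(B(x,t)\right)^{\frac{\kappa-1}{p}}\frac{dt}{t}\lesssim w\left(B(x,r)\right)^{\frac{\kappa-1}{p}}.
\]

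The main point is to control this integral through the growth of $w(B(x,t))$. Since $w\in A_p\subset A_\infty$, Lemma \ref{lemma100}(3) applied with $S=B(x,r)\subset B(x,t)=B$ for $t>r$ gives $\frac{w(B(x,r))}{w(B(x,t))}\le C\left(\frac{r}{t}\right)^{n\delta}$ for some $\delta>0$, that is, the reverse-doubling lower bound $w(B(x,t))\gtrsim(t/r)^{n\delta}w(B(x,r))$. As $\frac{\kappa-1}{p}<0$, raising this to the power $\frac{\kappa-1}{p}$ yields
\[
w\left(B(x,t)\right)^{\frac{\kappa-1}{p}}\lesssim\left(\frac{r}{t}\right)^{\gamma}w\left(B(x,r)\right)^{\frac{\kappa-1}{p}},\qquad\gamma:=\frac{n\delta(1-\kappa)}{p}>0.
\]
Inserting this estimate and changing variables $u=t/r$ reduces the integral to $w(B(x,r))^{\frac{\kappa-1}{p}}\int_1^\infty(1+\ln u)\,u^{-\gamma-1}\,du$, which converges because $\gamma>0$ (the logarithmic factor is dominated by any positive power of $u$). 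This verifies (\ref{47}), so Theorem \ref{teo15} applies and delivers the boundedness of $T_b$ on $L_{p,\kappa}(w)=M_{p,\varphi}(w)$, with $\|T_bf\|_{L_{p,\kappa}(w)}\lesssim\|b\|_\ast\|f\|_{L_{p,\kappa}(w)}$.

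The only genuine work is the verification of (\ref{47}); everything else is bookkeeping from the definitions. Within that verification the delicate ingredient is producing a lower bound for $w(B(x,t))$ that decays fast enough, once raised to the negative exponent $\frac{\kappa-1}{p}$, to absorb the logarithmic weight $1+\ln(t/r)$. This is precisely the reverse-doubling estimate furnished by the $A_\infty$ property in Lemma \ref{lemma100}(3), and I would expect no further obstacle once that bound is in place.
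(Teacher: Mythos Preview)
Your proposal is correct and follows exactly the route the paper indicates: specialize Theorem \ref{teo15} with $\Omega\equiv1$ (so $s'=1$ and the weight hypothesis becomes $w\in A_p$) and $\varphi_1=\varphi_2\equiv w(B(x,r))^{(\kappa-1)/p}$. The paper merely asserts that the corollary follows from Theorem \ref{teo15} without writing out the check of condition (\ref{47}); your argument, using the $A_\infty$ reverse-doubling bound from Lemma \ref{lemma100}(3) to produce the power decay that absorbs the logarithmic factor, correctly supplies that verification.
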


\begin{remark}
Note that, from Corollary \ref{corollary 4*} for the operators $M_{b}$ and
$[b,\overline{T}]$ we get results which are proved in \cite{KomShir}.
\end{remark}

\begin{conclusion}
Let $1<p<\infty$, $0<\kappa<1$, $w\in A_{p}$ and $b\in BMO\left(
\mathbb{R}
^{n}\right)  $. Then, the operators $M_{b}$ and $[b,\overline{T}]$ are bounded
on the weighted Morrey spaces $L_{p,\kappa}(w)$.
\end{conclusion}

Now, we give the applications of Theorem \ref{teo9} and Theorem \ref{teo15}
for the Marcinkiewicz operator .

Let $S^{n-1}=\{x\in{\mathbb{R}^{n}}:|x|=1\}$ be the unit sphere in
${\mathbb{R}^{n}}$ equipped with the Lebesgue measure $d\sigma$. Suppose that
$\Omega$ satisfies the following conditions.

(a) $\Omega$ is the homogeneous function of degree zero on ${\mathbb{R}^{n}%
}\setminus \{0\}$, that is,
\[
\Omega(\mu x)=\Omega(x),~~\text{for any}~~ \mu>0,x\in{\mathbb{R}^{n}}%
\setminus \{0\}.
\]

(b) $\Omega$ has mean zero on $S^{n-1}$, that is,
\[
\int \limits_{S^{n-1}}\Omega(x^{\prime})d\sigma(x^{\prime})=0,
\]
where $x^{\prime}=\frac{x}{\left \vert x\right \vert }$ for any $x\neq0$.

(c) $\Omega \in Lip_{\gamma}(S^{n-1})$, $0<\gamma \leq1$, that is there exists a
constant $M>0$ such that,
\[
|\Omega(x^{\prime})-\Omega(y^{\prime})|\leq M|x^{\prime}-y^{\prime}|^{\gamma
}~~\text{for any}~~x^{\prime},y^{\prime}\in S^{n-1}.
\]

In 1958, Stein \cite{Stein58} defined the Marcinkiewicz integral of higher
dimension $\mu_{\Omega}$ as
\[
\mu_{\Omega}(f)(x)=\left(  \int \limits_{0}^{\infty}|F_{\Omega,t}%
(f)(x)|^{2}\frac{dt}{t^{3}}\right)  ^{1/2},
\]
where
\[
F_{\Omega,t}(f)(x)=\int \limits_{|x-y|\leq t}\frac{\Omega(x-y)}{|x-y|^{n-1}%
}f(y)dy.
\]

Since Stein's work in 1958, the continuity of Marcinkiewicz integral has been
extensively studied as a research topic and also provides useful tools in
harmonic analysis \cite{LuDingY, St, Stein93, Torch}.

The sublinear commutator of the operator $\mu_{\Omega}$ is defined by
\[
\lbrack b,\mu_{\Omega}](f)(x)=\left(
{\displaystyle \int \limits_{0}^{\infty}}
|F_{\Omega,t,b}(f)(x)|^{2}\frac{dt}{t^{3}}\right)  ^{1/2},
\]
where
\[
F_{\Omega,t,b}(f)(x)=%
{\displaystyle \int \limits_{|x-y|\leq t}}
\frac{\Omega(x-y)}{|x-y|^{n-1}}[b(x)-b(y)]f(y)dy.
\]

We consider the space $H=\{h:\Vert h\Vert=\left(  \int \limits_{0}^{\infty
}|h(t)|^{2}\frac{dt}{t^{3}}\right)  ^{1/2}<\infty \}$. Then, it is clear that
$\mu_{\Omega}(f)(x)=\Vert F_{\Omega,t}(x)\Vert$.

By the Minkowski inequality and the conditions on $\Omega$, we get
\[
\mu_{\Omega}(f)(x)\leq \int \limits_{{\mathbb{R}^{n}}}\frac{|\Omega
(x-y)|}{|x-y|^{n-1}}|f(y)|\left(  \int \limits_{|x-y|}^{\infty}\frac{dt}{t^{3}%
}\right)  ^{1/2}dy\leq C\int \limits_{{\mathbb{R}^{n}}}\frac{\left \vert
\Omega(x-y)\right \vert }{|x-y|^{n}}|f(y)|dy.
\]
Thus, $\mu_{\Omega}$ satisfies the condition (\ref{e1}). It is known that
$\mu_{\Omega}$ and $[b,\mu_{\Omega}]$ are bounded on $L_{p}(w)$ for
$s^{\prime}\leq p$ and $w\in A_{\frac{p}{s^{\prime}}}$ or for $1<p<s$ and
$w^{1-p^{\prime}}\in$ $A_{\frac{p^{\prime}}{s^{\prime}}}$ (see
\cite{Ding-Fan-Pan, Ding-Lu-Yabuta}), then from Theorems \ref{teo9} and
\ref{teo15} we get

\begin{corollary}
Let $\Omega \in L_{s}(S^{n-1})$, $s>1$, $1\leq p<\infty$. Let also, for
$s^{\prime}\leq p$ and $w\in A_{\frac{p}{s^{\prime}}}$ the pair $(\varphi
_{1},\varphi_{2})$ satisfies condition (\ref{316}) and for $p<s$ and
$w^{1-p^{\prime}}\in$ $A_{\frac{p^{\prime}}{s^{\prime}}}$ the pair
$(\varphi_{1},\varphi_{2})$ satisfies condition (\ref{317}) and $\Omega$
satisfies conditions (a)--(c). Then the operator $\mu_{\Omega}$ is bounded
from $M_{p,\varphi_{1}}\left(  w\right)  $ to $M_{p,\varphi_{2}}\left(
w\right)  $ for $p>1$ and from $M_{1,\varphi_{1}}\left(  w\right)  $to
$WM_{1,\varphi_{2}}\left(  w\right)  $ for $p=1$.
\end{corollary}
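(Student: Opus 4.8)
The plan is to recognize this corollary as a direct application of Theorem \ref{teo9} to the concrete operator $\mu_{\Omega}$, so the work reduces to verifying that $\mu_{\Omega}$ meets the two structural hypotheses required there: the pointwise size condition (\ref{e1}) and the appropriate weighted $L_p$ (respectively weak $(1,1)$) boundedness. First I would confirm condition (\ref{e1}). This is exactly the Minkowski-inequality computation displayed immediately before the corollary: writing $\mu_{\Omega}(f)(x)=\Vert F_{\Omega,t}(f)(x)\Vert_{H}$ and applying the Minkowski integral inequality together with the elementary evaluation $\left(\int_{|x-y|}^{\infty}t^{-3}\,dt\right)^{1/2}\approx|x-y|^{-1}$, one obtains
\[
\mu_{\Omega}(f)(x)\leq C\int \limits_{{\mathbb{R}^{n}}}\frac{|\Omega(x-y)|}{|x-y|^{n}}\,|f(y)|\,dy,
\]
so that $\mu_{\Omega}$ satisfies (\ref{e1}) with kernel $\Omega$. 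Conditions (a)--(c) in particular force $\Omega\in Lip_{\gamma}(S^{n-1})\subset L_{s}(S^{n-1})$ together with the mean-zero property, which is what the cited boundedness theory requires.

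Next I would invoke the known weighted theory for the Marcinkiewicz integral. Under the smoothness and cancellation hypotheses (a)--(c), the results of \cite{Ding-Fan-Pan} and \cite{Ding-Lu-Yabuta} give that $\mu_{\Omega}$ is bounded on $L_{p}(w)$ whenever $s^{\prime}\leq p$ and $w\in A_{\frac{p}{s^{\prime}}}$, and likewise whenever $1<p<s$ and $w^{1-p^{\prime}}\in A_{\frac{p^{\prime}}{s^{\prime}}}$, and moreover that $\mu_{\Omega}$ maps $L_{1}(w)$ into $WL_{1}(w)$. These are precisely the operator-boundedness hypotheses demanded in the statement of Theorem \ref{teo9} (and, through it, in Lemma \ref{lemma2}).

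With both ingredients in hand, I would simply apply Theorem \ref{teo9}. The pair $(\varphi_{1},\varphi_{2})$ is assumed to satisfy the Zygmund-type condition (\ref{316}) under the $A_{\frac{p}{s^{\prime}}}$ hypothesis and condition (\ref{317}) under the $A_{\frac{p^{\prime}}{s^{\prime}}}$ hypothesis, which are exactly the integral conditions appearing in Theorem \ref{teo9}. Hence the conclusion transfers verbatim: $\mu_{\Omega}$ is bounded from $M_{p,\varphi_{1}}(w)$ to $M_{p,\varphi_{2}}(w)$ for $p>1$, and from $M_{1,\varphi_{1}}(w)$ to $WM_{1,\varphi_{2}}(w)$ for $p=1$.

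The only genuinely non-formal point, and thus the main obstacle, is ensuring that the cited weighted-boundedness results indeed hold for the exact weight classes $A_{\frac{p}{s^{\prime}}}$ and $A_{\frac{p^{\prime}}{s^{\prime}}}$ stated here and, crucially, that they include the endpoint weak-type $(1,1)$ estimate needed to run the $p=1$ case of Theorem \ref{teo9}. Once that verification from \cite{Ding-Fan-Pan, Ding-Lu-Yabuta} is secured, the generalized weighted Morrey conclusion is a purely formal consequence of Theorem \ref{teo9} and requires no further estimation.
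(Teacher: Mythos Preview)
Your proposal is correct and follows essentially the same route as the paper: verify condition (\ref{e1}) via the Minkowski-inequality computation, cite the weighted $L_p(w)$ boundedness of $\mu_{\Omega}$ from \cite{Ding-Fan-Pan, Ding-Lu-Yabuta}, and then apply Theorem \ref{teo9}. Your flagging of the weak-type $(1,1)$ endpoint as the one point requiring care is apt; the paper itself simply asserts the needed boundedness and proceeds.
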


\begin{corollary}
Let $\Omega \in L_{s}(S^{n-1})$, $s>1$, $1\leq p<\infty$, $0<\kappa<1$ and
$w\in A_{\frac{p}{s^{\prime}}}$ or $w^{1-p^{\prime}}\in$ $A_{\frac{p^{\prime}%
}{s^{\prime}}}$. Suppose that $\Omega$ satisfies conditions (a)--(c). Then the
operator $\mu_{\Omega}$ is bounded on $L_{p,\kappa}(w)$ for $p>1$ and bounded
from $L_{1,\kappa}(w)$ to $WL_{1,\kappa}(w)$.
\end{corollary}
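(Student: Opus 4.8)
The plan is to realize $\mu_\Omega$ as the concrete operator $T_\Omega=\mu_\Omega$ in Theorem \ref{teo9} and to specialize to the weight $\varphi_1(x,r)=\varphi_2(x,r)\equiv w(B(x,r))^{\frac{\kappa-1}{p}}$. For this choice the generalized weighted Morrey space coincides with the weighted Morrey space, $M_{p,\varphi}(w)=L_{p,\kappa}(w)$ and $WM_{p,\varphi}(w)=WL_{p,\kappa}(w)$ (see the remarks following the definition of $M_{p,\varphi}(w)$); indeed this is exactly the hypothesis of the earlier corollary specializing Theorem \ref{teo9} to $L_{p,\kappa}(w)$. Hence the statement will follow once I check that $\mu_\Omega$ satisfies the structural hypotheses of Theorem \ref{teo9} and that the chosen pair $(\varphi_1,\varphi_2)$ verifies conditions (\ref{316}) and (\ref{317}).

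The structural hypotheses are immediate. The Minkowski-inequality estimate just before the statement shows $\mu_\Omega$ obeys the size condition (\ref{e1}), so it qualifies as $T_\Omega$. Conditions (a)--(c) force $\Omega\in Lip_\gamma(S^{n-1})\subset L_s(S^{n-1})$ for every $s>1$, and the $L_p(w)$-boundedness of $\mu_\Omega$ in each weight regime ($s'\le p$ with $w\in A_{\frac{p}{s'}}$, or $1<p<s$ with $w^{1-p'}\in A_{\frac{p'}{s'}}$), as well as the weak $(1,1)$ bound from $L_1(w)$ to $WL_1(w)$ used when $p=1$, are provided by \cite{Ding-Fan-Pan, Ding-Lu-Yabuta}.

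The genuine content is the verification of (\ref{316}) and (\ref{317}) for $\varphi_1=\varphi_2=w(B(x,r))^{\frac{\kappa-1}{p}}$. For (\ref{316}) observe that $\varphi_1(x,\tau)\,w(B(x,\tau))^{\frac1p}=w(B(x,\tau))^{\frac{\kappa}{p}}$ is non-decreasing in $\tau$ (since $w(B(x,\cdot))$ is non-decreasing and $\kappa>0$), so the essential infimum over $\tau>t$ equals $w(B(x,t))^{\frac{\kappa}{p}}$ and (\ref{316}) reduces to $\int_r^\infty w(B(x,t))^{\frac{\kappa-1}{p}}\frac{dt}{t}\lesssim w(B(x,r))^{\frac{\kappa-1}{p}}$. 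Applying the $A_\infty$ reverse-doubling estimate of Lemma \ref{lemma100}(3) with $S=B(x,r)\subset B(x,t)=B$ gives $w(B(x,t))\gtrsim (t/r)^{n\delta}w(B(x,r))$; since $\frac{\kappa-1}{p}<0$, the integrand is then dominated by a constant multiple of $(t/r)^{-n\delta\frac{1-\kappa}{p}}w(B(x,r))^{\frac{\kappa-1}{p}}$, a negative power of $t$ whose integral over $(r,\infty)$ converges to a constant multiple of $w(B(x,r))^{\frac{\kappa-1}{p}}=\varphi_2(x,r)$. Condition (\ref{317}) is treated the same way, with $\|w\|_{L_{\frac{s}{s-p}}(B(x,\cdot))}$ replacing $w(B(x,\cdot))$ and the same reverse-doubling producing a convergent power-integral.

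The main obstacle is this convergence step: one must ensure the tail integral $\int_r^\infty (t/r)^{-n\delta\frac{1-\kappa}{p}}\frac{dt}{t}$ is finite, which is where $0<\kappa<1$ and membership $w\in A_\infty$ (guaranteed since $w\in A_{\frac{p}{s'}}$ or $w^{1-p'}\in A_{\frac{p'}{s'}}$, each forcing $w\in A_p\subset A_\infty$) enter decisively. With (\ref{316}) and (\ref{317}) established, Theorem \ref{teo9} yields boundedness of $\mu_\Omega$ on $L_{p,\kappa}(w)$ for $p>1$ and from $L_{1,\kappa}(w)$ to $WL_{1,\kappa}(w)$ for $p=1$.
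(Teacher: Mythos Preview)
Your proposal is correct and follows exactly the route the paper intends: the corollary is obtained by applying Theorem \ref{teo9} to the concrete operator $\mu_\Omega$ (which satisfies (\ref{e1}) by the Minkowski estimate and is $L_p(w)$-bounded by \cite{Ding-Fan-Pan, Ding-Lu-Yabuta}) with the specialization $\varphi_1=\varphi_2=w(B(x,r))^{\frac{\kappa-1}{p}}$, which is precisely the earlier corollary reducing $M_{p,\varphi}(w)$ to $L_{p,\kappa}(w)$. The paper leaves the verification of (\ref{316})--(\ref{317}) for this choice of $\varphi$ implicit, whereas you spell it out via the reverse-doubling estimate of Lemma \ref{lemma100}(3); this added detail is fine and does not constitute a different approach.
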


\begin{corollary}
Let $\Omega \in L_{s}(S^{n-1})$, $s>1$. Let $1<p<\infty$ and $b\in BMO\left(
\mathbb{R}
^{n}\right)  $. Let also, for $s^{\prime}\leq p$ and $w\in A_{\frac
{p}{s^{\prime}}}$ the pair $(\varphi_{1},\varphi_{2})$ satisfies condition
(\ref{47}) and for $p<s$ and $w^{1-p^{\prime}}\in$ $A_{\frac{p^{\prime}%
}{s^{\prime}}}$ the pair $(\varphi_{1},\varphi_{2})$ satisfies condition
(\ref{48}) and $\Omega$ satisfies conditions (a)--(c). Then, the operator
$[b,\mu_{\Omega}]$ is bounded from $M_{p,\varphi_{1}}\left(  w\right)  $ to
$M_{p,\varphi_{2}}\left(  w\right)  $.
\end{corollary}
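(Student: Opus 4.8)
The plan is to verify that $[b,\mu_{\Omega}]$ falls under the scope of Theorem \ref{teo15}, after which the conclusion is immediate. Two things must be checked: first, that $[b,\mu_{\Omega}]$ obeys the pointwise size estimate (\ref{e2}); second, that it is bounded on $L_{p}(w)$ in the two weight regimes appearing in the hypothesis. Once both are in place, the corollary is just an instance of the main theorem.

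First I would establish the size condition (\ref{e2}). Working in the Hilbert space $H$ introduced above, we have $[b,\mu_{\Omega}](f)(x)=\Vert F_{\Omega,t,b}(f)(x)\Vert$. Applying the Minkowski integral inequality exactly as in the derivation of the size bound for $\mu_{\Omega}$ itself, I would pull the $H$-norm inside the spatial integral to obtain
\[
[b,\mu_{\Omega}](f)(x)\leq \int \limits_{{\mathbb{R}^{n}}}\frac{|\Omega(x-y)|}{|x-y|^{n-1}}\,|b(x)-b(y)|\,|f(y)|\left(  \int \limits_{|x-y|}^{\infty}\frac{dt}{t^{3}}\right)  ^{1/2}dy.
\]
Evaluating the inner integral gives $\left(  \int \limits_{|x-y|}^{\infty}t^{-3}\,dt\right)  ^{1/2}=\left(  2|x-y|^{2}\right)  ^{-1/2}$, which absorbs one extra power of $|x-y|$ and yields
\[
[b,\mu_{\Omega}](f)(x)\leq C\int \limits_{{\mathbb{R}^{n}}}|b(x)-b(y)|\,\frac{|\Omega(x-y)|}{|x-y|^{n}}\,|f(y)|\,dy,
\]
which is precisely (\ref{e2}), with a constant independent of $f$ and $x$.

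Next I would supply the $L_{p}(w)$ boundedness. Since $\Omega$ satisfies conditions (a)--(c) and $b\in BMO\left(  {\mathbb{R}^{n}}\right)  $, the commutator $[b,\mu_{\Omega}]$ is known to be bounded on $L_{p}(w)$ whenever $s^{\prime}\leq p$ and $w\in A_{\frac{p}{s^{\prime}}}$, or $1<p<s$ and $w^{1-p^{\prime}}\in A_{\frac{p^{\prime}}{s^{\prime}}}$; this is the content of the results of \cite{Ding-Fan-Pan, Ding-Lu-Yabuta} quoted above. These are exactly the two weight conditions imposed in the statement.

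Finally, with (\ref{e2}) verified and the $L_{p}(w)$ boundedness in hand, $[b,\mu_{\Omega}]$ meets all the hypotheses of Theorem \ref{teo15}. Invoking that theorem under assumption (\ref{47}) in the case $s^{\prime}\leq p$, $w\in A_{\frac{p}{s^{\prime}}}$, and under assumption (\ref{48}) in the case $p<s$, $w^{1-p^{\prime}}\in A_{\frac{p^{\prime}}{s^{\prime}}}$, gives the boundedness of $[b,\mu_{\Omega}]$ from $M_{p,\varphi_{1}}\left(  w\right)  $ to $M_{p,\varphi_{2}}\left(  w\right)  $, which completes the proof. The only genuinely computational step is the Minkowski-inequality reduction to (\ref{e2}); since it is entirely parallel to the estimate already carried out for $\mu_{\Omega}$, no new obstacle arises, and I expect the verification of the $L_p(w)$ boundedness (which here is merely cited) to be the only place requiring care about the precise weight ranges.
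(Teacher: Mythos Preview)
Your proposal is correct and follows exactly the paper's approach: the paper verifies the size condition for $\mu_{\Omega}$ via the Minkowski inequality and cites \cite{Ding-Fan-Pan, Ding-Lu-Yabuta} for the $L_{p}(w)$ boundedness of $\mu_{\Omega}$ and $[b,\mu_{\Omega}]$, then states that the corollaries follow from Theorems \ref{teo9} and \ref{teo15}. Your only addition is making explicit the parallel Minkowski computation for $[b,\mu_{\Omega}]$ to check (\ref{e2}), which the paper leaves implicit; otherwise the arguments coincide.
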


\begin{corollary}
Let $\Omega \in L_{s}(S^{n-1})$, $s>1$, $1<p<\infty$, $0<\kappa<1$, $w\in
A_{\frac{p}{s^{\prime}}}$ or $w^{1-p^{\prime}}\in$ $A_{\frac{p^{\prime}%
}{s^{\prime}}}$ and $b\in BMO\left(
\mathbb{R}
^{n}\right)  $. Suppose that $\Omega$ satisfies conditions (a)--(c). Then the
operator $[b,\mu_{\Omega}]$ is bounded on $L_{p,\kappa}(w)$.
\end{corollary}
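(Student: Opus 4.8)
The plan is to realize this corollary as a special case of Theorem \ref{teo15}. First I would note that $[b,\mu_{\Omega}]$ is precisely a sublinear commutator operator of the type $T_{\Omega,b}$ satisfying condition (\ref{e2}): from the Minkowski estimate already recorded for $\mu_{\Omega}$, applied to the kernel of $F_{\Omega,t,b}$, one obtains the pointwise bound
\[
|[b,\mu_{\Omega}]f(x)|\leq C\int\limits_{{\mathbb{R}^{n}}}|b(x)-b(y)|\,\frac{|\Omega(x-y)|}{|x-y|^{n}}\,|f(y)|\,dy,
\]
which is exactly (\ref{e2}). Since $\Omega$ satisfies conditions (a)--(c), the $L_{p}(w)$-boundedness of $[b,\mu_{\Omega}]$ cited from \cite{Ding-Fan-Pan, Ding-Lu-Yabuta} holds for $s'\leq p$ with $w\in A_{p/s'}$ and for $1<p<s$ with $w^{1-p'}\in A_{p'/s'}$, so the standing hypothesis of Theorem \ref{teo15} that $T_{\Omega,b}$ be bounded on $L_{p}(w)$ is met.

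Next I would invoke the identification $M_{p,\varphi}(w)=L_{p,\kappa}(w)$ under the choice $\varphi(x,r)\equiv w(B(x,r))^{\frac{\kappa-1}{p}}$ (see part $(2)$ of the Remark following the definition of $M_{p,\varphi}(w)$). Setting $\varphi_{1}=\varphi_{2}=\varphi$, the desired conclusion reduces to checking that this pair satisfies conditions (\ref{47}) and (\ref{48}), after which Theorem \ref{teo15} delivers boundedness on $M_{p,\varphi}(w)=L_{p,\kappa}(w)$ at once. This is the same mechanism already used to obtain the $\Omega\equiv1$ analogue in Corollary \ref{corollary 4*}.

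For condition (\ref{47}) the key observation is that $\varphi_{1}(x,\tau)w(B(x,\tau))^{1/p}=w(B(x,\tau))^{\kappa/p}$, and since $\tau\mapsto w(B(x,\tau))$ is nondecreasing and $\kappa/p>0$, the essential infimum over $t<\tau<\infty$ equals $w(B(x,t))^{\kappa/p}$. Thus (\ref{47}) collapses to
\[
\int\limits_{r}^{\infty}\left(1+\ln\frac{t}{r}\right)w(B(x,t))^{\frac{\kappa-1}{p}}\frac{dt}{t}\lesssim w(B(x,r))^{\frac{\kappa-1}{p}}.
\]
To establish this I would apply the reverse-doubling lower bound contained in the right-hand inequality of Lemma \ref{lemma100}$(3)$ (valid since $w\in A_{p/s'}\subset A_{p}$): for $t>r$ there exist $C,\delta>0$ with $w(B(x,t))\gtrsim(t/r)^{n\delta}w(B(x,r))$, whence $w(B(x,t))^{\frac{\kappa-1}{p}}\lesssim(r/t)^{\alpha}w(B(x,r))^{\frac{\kappa-1}{p}}$ with $\alpha=n\delta\frac{1-\kappa}{p}>0$ using $\kappa<1$. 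Substituting $u=t/r$ reduces the left side to a constant multiple of $w(B(x,r))^{\frac{\kappa-1}{p}}\int_{1}^{\infty}(1+\ln u)u^{-\alpha-1}\,du$, and this integral converges because the power decay $u^{-\alpha-1}$ dominates the logarithmic growth. Condition (\ref{48}) is verified by the identical scheme, with $w(B(x,\cdot))$ replaced by the mixed norm $\|w\|_{L_{s/(s-p)}(B(x,\cdot))}$; the needed comparison of these norms across scales follows from the $A_{p'/s'}$ hypothesis on $w^{1-p'}$ together with the identities (\ref{7})--(\ref{9}).

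The main obstacle is the logarithmic factor $1+\ln(t/r)$ produced by the commutator: unlike the non-commutator case of Theorem \ref{teo9}, one must ensure that this factor does not destroy convergence of the defining integral. The decisive point is that membership of the weight in $A_{p/s'}$ (resp. of $w^{1-p'}$ in $A_{p'/s'}$) forces a genuine power gap in $w(B(x,t))$ (resp. in $\|w\|_{L_{s/(s-p)}(B(x,t))}$) between the scales $r$ and $t$, and any fixed positive power beats the logarithm in the tail integral. The bookkeeping for (\ref{48}) involving the $L_{s/(s-p)}$ norm is the most delicate step, but it becomes routine once the power-gap estimate is in hand.
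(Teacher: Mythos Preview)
Your proposal is correct and follows essentially the same approach as the paper: the corollary is deduced from Theorem \ref{teo15} by observing that $[b,\mu_{\Omega}]$ satisfies (\ref{e2}) and is bounded on $L_{p}(w)$ under the stated weight hypotheses, and then specializing $\varphi_{1}(x,r)=\varphi_{2}(x,r)=w(B(x,r))^{\frac{\kappa-1}{p}}$. The paper does not spell out the verification of (\ref{47}) and (\ref{48}) for this choice, so your reverse-doubling argument via Lemma \ref{lemma100}$(3)$ makes explicit what the paper leaves implicit.
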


\end{document}